\newtheorem{theorem}{Theorem}[section]
\newtheorem{lemma}[theorem]{Lemma}
\newtheorem{prop}{Proposition}[theorem]
\newtheorem{cor}[theorem]{Corollary}
\theoremstyle{definition}
\newtheorem{definition}[theorem]{Definition}
\newtheorem{example}[theorem]{Example}
\newtheorem*{ack*}{Acknowledgment}
\theoremstyle{remark}
\newtheorem{remark}[theorem]{Remark}
\numberwithin{equation}{section}
\newcommand\ca{{\mathcal A}}
\newcommand\cc{{\mathcal C}}
\newcommand\cd{{\mathcal D}}
\newcommand\cg{{\mathcal G}}
\newcommand\cl{{\mathcal L}}
\newcommand\co{{\mathcal O}}
\newcommand\Bz{\mathbb Z}
\newcommand\Bn{\mathbb N}
\newcommand{\sig}{\text{sig}}
\newcommand{\fol}{\text{fol}}
\newcommand\natext{X^{+-}}%  {\tilde{X}}
\begin{document}

\title[Markov diagrams]{Markov Diagrams for Some Non-Markovian Systems}
\author{Kathleen Carroll}
\address{Department of Mathematics,
CB 3250 Phillips Hall,
University of North Carolina,
Chapel Hill, NC 27599 USA}
\curraddr{Department of Prosthetics and Orthotics, University of Hartford,
200 Bloomfield Avenue,
West Hartford, CT 06117}
\email{kacarroll@hartford.edu}
\author{Karl Petersen}
\address{Department of Mathematics,
CB 3250 Phillips Hall,
University of North Carolina,
Chapel Hill, NC 27599 USA}
\email{petersen@math.unc.edu}

%\subjclass[2000]{Primary }
%    The 2010 edition of the Mathematics Subject Classification is
%    now available.  If you are citing a classification from the
%    new scheme, use the following input coding instead.
\subjclass[2010]{Primary (37B10)}

\date{\today}

\begin{abstract}
{Markov diagrams provide a way to understand the structures of topological dynamical systems. We examine the construction of such diagrams for subshifts, including some which do not have any nontrivial Markovian part, in particular Sturmian systems and some substitution systems. }
\end{abstract}

\maketitle

%    Text of article.

\section{Introduction}
F. Hofbauer \cite{Hofbauer} and J. Buzzi \cite{Buzzi} defined Markov diagrams in order to study the structures {and invariant measures} of dynamical systems, especially those with a Markovian aspect, for example piecewise monotonic interval maps and other possibly nonunifomly expanding maps. Here we examine further the construction of these diagrams for subshifts, including some that are minimal and have zero entropy. Such subshifts may be considered to be highly non-Markovian, since they have some long-range order, indeed infinite memory. {We hope that Markov diagrams will be useful also for understanding and classifying such systems, for example besides helping to identify measures of maximal entropy as in \cite{Hofbauer, Hofbauer-Raith, Buzzi-Hubert} also to determine complexity functions, estimate return times to cylinders, and so on.}

In Sections \ref{HB} and \ref{Sturmian} we provide a construction of Hofbauer-Buzzi Markov diagrams for Sturmian systems.
In particular, in Theorem \ref{sturmthm} we show that the Hofbauer-Buzzi Markov diagram of a Sturmian system can be constructed solely from its left special sequence.
 In Section \ref{Gen} we discuss properties of Hofbauer-Buzzi Markov diagrams that hold for any subshift.
We show that given a one-sided subshift $X^+$ there is a correspondence between those paths on the Hofbauer-Buzzi Markov diagram of $X^+$ that start with a vertex of length one and points in $X^+$ (Theorem \ref{pathprop}). Corollary \ref{complexity} relates the number of such paths to the complexity function of $X^+$. We prove that the eventually Markov part of the natural extension of any one-sided subshift is empty provided that the natural extension is an infinite minimal subshift (Proposition \ref{evMark}).
In Section \ref{Morse} we construct the Hofbauer-Buzzi Markov diagram for the Morse minimal subshift by showing that the vertices are precisely those blocks in the language of the subshift that are of the form 0 or 1 followed by a block that can be extended to the left in two ways.

\section{Background}
 We recall some of the basic terminology and notation from topological and symbolic dynamics; for more details, see for example \cite{LM} or \cite{KEP}.
 A \emph{topological dynamical system} is a pair $(X,T)$, where $X$ is a compact Hausdorff space (usually metric) and $T: X \to X$ is a continuous mapping.
We focus on topological dynamical systems which are \emph{shift dynamical systems}. Let $\ca$ be a finite set, called an \emph{alphabet}, whose elements are called {\em symbols}. For us often $\ca=\{0, 1, \dots, n-1\}$, in fact often $\ca =\{0,1\}$. A \emph{sequence} is a one-sided infinite string of symbols (a function $\Bn \to \ca$) and a \emph{bisequence} is an infinite string of symbols that extends in two directions (a function $\Bz \to \ca$).
We will use the word ``sequence" to apply also to bisequences, depending on the context to clarify the meaning.
The \emph{full $n$-shift} is $\Sigma_n=\{0,1,\dots,n-1\}^\Bz$, the collection of all bisequences of symbols from $\ca=\{0,1,\dots,n-1\}$.
The \emph{one-sided full $n$-shift} is $\Sigma^+_n=\{0,1,\dots,n-1\}^\Bn$.
We also define the \emph{shift transformation} $\sigma : \Sigma(\ca) \to \Sigma(\ca)$ and $\Sigma^+(\ca) \to \Sigma^+(\ca)$ by
$(\sigma x)_i = x_{i+1} \hspace{3mm} \text{for all } i.$ The pair $(\Sigma_n, \sigma)$ is called the $n$-\emph{shift dynamical system}.
We give $\ca$ the discrete topology and  $\Sigma(\ca)$ and $\Sigma^+(\ca)$ the product topology. The topologies on $\Sigma(\ca)$ and $\Sigma^+(\ca)$ are compatible with the metric $d(x,y) = 1/2^n$, where $n = \inf \{|k|\hspace{2mm} |\hspace{2mm} x_k \neq y_k \}$.
A \emph{subshift} is a pair $(X, \sigma)$ (or $(X^+, \sigma)$), where $X \subset \Sigma_n$ (or $X^+ \subset \Sigma_n^+$) is a nonempty, closed, shift-invariant set.
A finite string of letters from $\ca$ is called a \emph{block} and the length of a block $B$ is denoted $|B|$. Furthermore, a block of length $n$ is an $n$-block. Given a subshift $(X, \sigma)$ of a full shift, $\cl_n(X)$ denotes the set of all $n$-blocks that occur in points in $X$. The \emph{language} of $X$ is the collection
$\cl(X) = \bigcup_{n=0}^{\infty}\cl_n(X).$
A {\em shift of finite type} is a subshift consisting of all sequences none of whose subblocks are in some finite collection of forbidden blocks of finite length.
A topological dynamical system is {\em minimal} if every orbit is dense. The orbit closure of a sequence is minimal if and only if the sequence is {\em syndetically recurrent}: every block that appears in the sequence appears with bounded gap.
The \emph{complexity function} of a sequence $u$, denoted $p_u$, maps each natural number $n$ to the number of blocks of length $n$ that appear in $u$.
If $X$ is a subshift, then $p_X(n)$ is the number of blocks of length $n$ that appear in $\cl(X).$

The construction of Hofbauer-Buzzi Markov diagrams involves the use of \emph{follower sets}.
There are several ways to define follower sets.
The ({\em block to block) follower set} of a block $w \in \cl(X)$ is $F_X(w) = \{v \in \cl(X) | wv \in \cl(X) \}.$
Alternatively, define the \emph{future} $F_X$ of a left-infinite sequence $\lambda$ in $X$ to be the collection of all right-infinite sequences $\rho$ such that $\lambda \rho \in X$. This is a \emph{ray to ray} follower set. It is also possible to define \emph{block to ray} or \emph{ray to block} follower sets. The definition of follower set (\ref{fol}) used in constructing Hofbauer-Buzzi diagrams is slightly different from both of these.

Follower sets have been particularly useful in examining sofic systems.
 A \emph{sofic shift} is a shift space that is a factor of a shift of finite type \cite{Weiss}.
Alternatively, a sofic shift consists of all sequences that are labels of infinite walks on a finite graph with labeled edges (see \cite{LM}).
Fischer \cite{Fischer} and Krieger \cite{Krieger} used follower sets to construct covers for sofic shifts. A \emph{presentation} of a sofic shift $X$ is a labeled graph $\cg$ for which $X_{\cg}= X$. A presentation is \emph{right-resolving} if for each vertex $I$ of $G$ the edges starting at $I$ carry different labels. A \emph{minimal right-resolving presentation} of a sofic shift $X$ is a right-resolving presentation of $X$ having the fewest vertices among all right-resolving presentations of $X$. Fischer proved that any two minimal right-resolving presentations are isomorphic as labeled graphs; the minimal right-resolving presentation of a sofic shift $X$ is called the \emph{Fischer cover} \cite{Fischer, LM}.

Given an irreducible (topologically transitive) sofic shift $X$ over a finite alphabet $\ca$, the Fischer cover can be constructed using the follower sets defined above. Let $\cc_X$ be the collection of all (block to block) follower sets in $X$. We write $\cc_X = \{ F_X(w)| w \in \cl(X) \}$. Now construct a labeled graph $\cg = (G, L)$ as follows. The vertices of $G$ are the elements in $\cc_X$. Let $c = F_X(w)$ be an element in $\cc_X$ and $a \in \ca$. If $wa \in \cl(X)$, let $c' = F_X(wa) \in \cc_X$ and draw an edge labeled $a$ from $c$ to $c'$. If  $wa \notin \cl(X)$, do nothing. Continuing this process for all elements in $\cc_X$ yields a labeled graph $\cg$ called the \emph{follower set graph}. The Fischer cover of $X$ is the labeled subgraph of the follower set graph formed by using only the follower sets of intrinsically synchronizing blocks. Here a block $w$ in $\cl(X)$ is \emph{intrinsically synchronizing} if whenever $uw, wv \in \cl(X)$ then $uwv \in \cl(X)$ \cite{LM}.

The Krieger cover is constructed using the \emph{futures}, as defined above, of left-infinite sequences in $X$. We define the \emph{future cover} as follows. Let $\cg$ be the labeled graph whose vertices are the futures of left-infinite sequences. For $a$ in $\ca$, if $\lambda$ and $\lambda a$ are left-infinite sequences in $X$, then there is an edge labeled $a$ from $F_X(\lambda)$ to $F_X(\lambda a)$. The graph $\cg$ is the \emph{future} or Krieger cover of the subshift $X$ \cite{Krieger, LM}.
The Krieger cover can be constructed for any subshift $X$, but it usually leads to non-irreducible and often uncountable graphs. Nevertheless, the Krieger cover is canonically associated to the subshift $X$. This is proved for the sofic case in \cite{Krieger2} and in general in \cite{Fiebig}.

\section{Hofbauer-Buzzi Markov diagrams} \label{HB}
Franz Hofbauer \cite{Hofbauer} constructed Markov diagrams to determine measures of maximal entropy for piecewise monotonically increasing functions on the interval. In 1997, Buzzi extended the construction of the Hofbauer Markov diagram to arbitrary smooth interval maps \cite{Buzzi1}, and in 2010 to any subshift \cite{Buzzi}. The Hofbauer-Buzzi Markov diagram is a slight variation of Hofbauer's original Markov diagram. We will refer to such diagrams as {\em HB diagrams}. In order to describe the construction, we introduce the following definitions from \cite{Buzzi}.

Let $\mathcal{A}$ be a finite alphabet and $X^+ \subset \mathcal{A}^\Bn$ a one-sided subshift. Furthermore, let $\natext \subset \mathcal{A}^\Bz$ be its natural extension
$$\natext = \{ x \in \mathcal{A}^{\Bz} | \text{ for all } p \in \Bz \hspace{2mm} x_px_{p+1}... \in X^+\},$$
with the action of the shift $\sigma$ which maps $a_n \to a_{n+1}$ for all $n$ in $\Bz$.

\begin{definition}
Let $\pi_{X^+}$ denote the continuous shift commuting projection from $\natext$ to $X^+$ defined by
\begin{center}
$\pi_{X^+}(x) = x_0x_1x_2...$,
\end{center}
where $x=...x_{-1}.x_0x_1x_2...$.
\end{definition}

\begin{definition} \label{fol} The \emph{follower set} of a block $w=a_{-n}a_{-n+1}...a_0$ in $\cl(\natext)$ is
$$\text{fol}(a_{-n}a_{-n+1}...a_0) = \{b_0b_1... \in X^+ | \text{ there exists }  b \in \natext \text{ with } b_{-n}...b_{0} = a_{-n}...a_0 \}.$$
\end{definition}

\begin{definition}
A \emph{significant block} of $\natext$ is $a_{-n}a_{-n+1}...a_0$ such that
$$\text{fol}(a_{-n}a_{-n+1}...a_0) \subsetneq \text{fol}(a_{-n+1}a_{-n+2}...a_0).$$
\end{definition}

\begin{definition} The \emph{significant form} of $a_{-n}a_{-n+1}...a_0$ in $\natext$ is
$$\text{sig}(a_{-n}...a_0) = a_{-k}...a_0,$$
where $k \leq n$ is maximal such that $a_{-k}...a_0$ is significant.
\end{definition}

It is apparent that these definitions are tailored for one-sided subshifts. However, we can easily extend such definitions to an arbitrary two-sided subshift $X \subset \Sigma_n$ by letting $X^+$ denote the set of right rays that appear in points in $X$. Then $X$ is equal to the natural extension $\natext$ of $X^+$.

We define the \emph{HB diagram} $\mathcal{D}$ of a one or two-sided subshift $X$ with natural extension $\natext$ to be the oriented graph whose vertices are the significant blocks of $\natext$ and whose arrows are defined by
\begin{center}
$a_{-n}...a_0 \to b_{-m}...b_0$
\end{center}
if and only if  $a_{-n}...a_0b_0 \in \cl (\natext)$ and
\begin{center}
$b_{-m}...b_0 = \text{sig}(a_{-n}...a_0b_0).$
\end{center}
\noindent (In Hofbauer's construction of Markov diagrams the vertices are the follower sets, not the significant blocks \cite{Hofbauer, Buzzi}.)

Let $\cd$ be the HB diagram of any one or two-sided subshift $X$. The following definitions from \cite{Buzzi} relate $\cd$ to $\natext$.
\begin{definition}
Given an HB diagram $\cd$ of a subshift $X$ with vertex set $V_\cd$ (which may be infinite), the corresponding \emph{Markov shift} is the set of all bi-infinite paths that occur on $\cd$,
\begin{center}
$\displaystyle X(\cd) = \{\alpha \in V_\cd^{\Bz} \hspace{1mm}| \text{ for all } p \in \Bz \hspace{2mm} \alpha_p \to \alpha_{p+1} \text{ on } \cd \} \subset (V_\cd)^{\Bz}$,
\end{center}
 together with the shift map $\sigma$.
\end{definition}

 Note that the alphabet $V_\cd$ may be infinite, and the HB diagram of an arbitrary subshift may not have paths that continue infinitely in two directions.

We relate $X(\cd)$ to $\natext$ as follows.
\begin{definition}
Let $\hat{\pi}$ denote the natural continuous projection defined by
$$\hat{\pi}: \alpha \in X(\cd) \mapsto a \in \natext$$ with $a_n$ the last symbol of the block $\alpha_n$ for all $n \in \Bz$.
\end{definition}

\begin{definition}\label{def:one-sidedpaths}
Let $X(\cd)_v^+$ denote the space of one-sided infinite paths starting at vertex $v$ on $\cd$, and let $X(\cd)^+$ denote the space of one-sided infinite paths starting at a vertex $v$ of length 1 on $\cd$. %$ X(\cd)^+ = \{\alpha \in X(\cd)_v^+ | |v| =1 \}.$
\end{definition}

\begin{definition}
Let $\hat{\pi}^+$ denote the projection defined by
$$\hat{\pi}^+: \alpha \in X(\cd)^+ \mapsto a \in X^+$$ with $a_n$ the last symbol of the block $\alpha_n$ for all $n \in \Bn.$
\end{definition}

In case we want to project a finite path $\alpha_0 \to \alpha_1 \to \cdots \to \alpha_n$ on $\cd$ to a block in $\cl(X^+)$, we write $\hat{\pi}(\alpha_0...\alpha_n) = a_0...a_n,$ where $a_i$ is the last letter of $\alpha_i$.

We state a few preliminary results that apply to any subshift.

\begin{lemma}
If $a_1...a_n$ is a signficant block of a subshift $X$, then $a_1...a_{n-1}$ is also a significant block of $X$.
\label{consecsig}
\end{lemma}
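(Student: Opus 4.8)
The plan is to argue by contrapositive, after isolating the two structural facts about follower sets that do all the work. Throughout I read a block $a_1\cdots a_n$ with $a_n$ the present (rightmost) symbol, matching the convention $a_{-n}\cdots a_0$ of the definitions, so that ``$a_1\cdots a_n$ significant'' means $\text{fol}(a_1\cdots a_n)\subsetneq\text{fol}(a_2\cdots a_n)$ and the goal is to deduce $\text{fol}(a_1\cdots a_{n-1})\subsetneq\text{fol}(a_2\cdots a_{n-1})$. The first fact I would record is \emph{monotonicity}: for any block $u$ and symbol $b$ with $bu\in\cl(\natext)$ one has $\text{fol}(bu)\subseteq\text{fol}(u)$, since by Definition \ref{fol} any $y\in\natext$ witnessing a ray in $\text{fol}(bu)$ already contains $u$ in the required position and so witnesses the same ray for $\text{fol}(u)$. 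In particular $\text{fol}(a_1\cdots a_{n-1})\subseteq\text{fol}(a_2\cdots a_{n-1})$, so it will suffice to show this inclusion is \emph{proper}.

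The crux is a determinism (``append'') relation: if $u$ is a block with last symbol $e$ and $c$ is a symbol with $uc\in\cl(\natext)$, then $\text{fol}(uc)$ is completely determined by $\text{fol}(u)$ and $c$. Concretely, I would prove that a forward ray $\rho$ (necessarily beginning with $c$) lies in $\text{fol}(uc)$ if and only if the ray $e\rho$ obtained by prepending $e$ lies in $\text{fol}(u)$; equivalently, $\text{fol}(uc)$ is the image of $\{\tau\in\text{fol}(u):\text{ the second symbol of }\tau\text{ is }c\}$ under the left shift $\sigma$. The proof is a direct transfer of witnesses in $\natext$: a point realizing $\rho\in\text{fol}(uc)$ also realizes $e\rho\in\text{fol}(u)$ once one restricts attention to the positions occupied by $u$, and conversely a point realizing $e\rho\in\text{fol}(u)$ has $c$ in the very next position (because the second symbol of $e\rho$ is $c$) and hence realizes $\rho\in\text{fol}(uc)$. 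This step is pure index bookkeeping with Definition \ref{fol}, and it is the only place requiring care; I expect it to be the main, though modest, obstacle.

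With these in hand the lemma becomes formal. Set $u=a_1\cdots a_{n-1}$ and $u'=a_2\cdots a_{n-1}$, which share the same last symbol $a_{n-1}$ and the same terminal position, and put $c=a_n$, so that $uc=a_1\cdots a_n$ and $u'c=a_2\cdots a_n$. Since $\text{fol}(uc)$ and $\text{fol}(u'c)$ arise from $\text{fol}(u)$ and $\text{fol}(u')$ by applying the \emph{same} operation (select the rays whose second symbol is $c$, then shift), the equality $\text{fol}(u)=\text{fol}(u')$ would force $\text{fol}(uc)=\text{fol}(u'c)$. But significance of $a_1\cdots a_n$ gives $\text{fol}(uc)\subsetneq\text{fol}(u'c)$, hence $\text{fol}(uc)\neq\text{fol}(u'c)$, so $\text{fol}(u)\neq\text{fol}(u')$; combined with the monotonicity inclusion $\text{fol}(u)\subseteq\text{fol}(u')$ this yields $\text{fol}(u)\subsetneq\text{fol}(u')$, i.e.\ $a_1\cdots a_{n-1}$ is significant. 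The degenerate case $n=1$, in which $a_1\cdots a_{n-1}$ is the empty block, I would dispose of separately using the convention that the empty block has follower set all of $X^+$.
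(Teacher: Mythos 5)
Your proof is correct and rests on the same key observation as the paper's: a point of $\natext$ witnessing a ray in $\fol(wc)$ also witnesses the one-symbol-prepended ray in $\fol(w)$, and conversely. The paper phrases this directly (prepending $a_{n-1}$ to a ray separating $\fol(a_1\cdots a_n)$ from $\fol(a_2\cdots a_n)$ to obtain a ray separating the length-$(n-1)$ follower sets), while you package the identical witness transfer as an ``append'' equivalence and run the argument in contrapositive form; the substance is the same.
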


\begin{proof}
If $a_1...a_n$ is significant, then there exists a ray $c_0c_1c_2... \in \fol(a_2...a_n)$ such that $c_0c_1c_2... \notin \fol(a_2...a_n).$
Thus, there does not exist $c \in \natext$ with $c_{-n+1}...c_0 = a_1...a_n$.

Consider $a_1...a_{n-1}.$ Certainly, $c_{-1}c_0c_1... \in \fol(a_2...a_{n-1}).$ Suppose on the contrary that $c_{-1}c_0c_1... \in \fol(a_1...a_{n-1})$. Then there exists a $b \in \natext$ with $b_{-n+2}...b_0 = a_1...a_{n-1}$ and $b_0b_1b_2... = c_{-1}c_0c_1...$. However, $b_{-n+2}...b_0b_1 = a_1...a_n$ and $b_1b_2... = c_0c_1...$. Relabeling, this implies that $c_0c_1... \in \fol(a_1...a_n)$. This is a contradiction.

Thus $c_{-1}c_0c_1... \notin \fol(a_1...a_{n-1})$ and $a_1...a_{n-1}$ is a significant block of $X$.
\end{proof}

The following Proposition is an immediate consequence of Lemma \ref{consecsig}.

\begin{prop}
If there are infinitely many significant blocks of a subshift $X$, then for all $n \in \Bn$ there exists a significant block of $X$ of length $n$.
\end{prop}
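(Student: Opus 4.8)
The plan is to derive this Proposition directly from Lemma \ref{consecsig}, exploiting the fact that the significant blocks of $X$ are closed under taking a suffix obtained by deleting the \emph{leftmost} symbol. The hypothesis gives us infinitely many significant blocks, and since the alphabet is finite, for each fixed length there are only finitely many blocks; hence the lengths of the significant blocks cannot be bounded, and there must be significant blocks of arbitrarily large length.

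First I would make precise the mechanism by which Lemma \ref{consecsig} lets us ``descend'' in length. The lemma, as stated, asserts that if $a_1 \dots a_n$ is significant then $a_1 \dots a_{n-1}$ is significant. Reading the definitions, the blocks in this paper are indexed so that the rightmost symbol is $a_0$ and the block grows to the left; a significant block records that adjoining one more symbol on the left strictly shrinks the follower set. The lemma then says that truncating on the left (equivalently, passing from $a_{-n}\dots a_0$ to $a_{-n+1}\dots a_0$) preserves significance. I would restate the descent in that indexing to match the definition of significant form and avoid any ambiguity, since the Proposition's claim is about the existence of a significant block of each length $n$, and I want the induction to lower the length by exactly one at each step.

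The key step is then the following counting argument. Suppose there are infinitely many significant blocks. Because $X$ is a subshift over the finite alphabet $\ca$, for each fixed length $\ell$ there are at most $|\ca|^{\ell}$ blocks in $\cl(X)$, hence only finitely many significant blocks of length $\ell$. Since the total number of significant blocks is infinite but each length class is finite, the set of lengths that actually occur among significant blocks must be unbounded. Thus for any target $n \in \Bn$ there is a significant block $w$ of some length $N \ge n$. Applying Lemma \ref{consecsig} repeatedly $N - n$ times, deleting one symbol from the left at each stage, produces significant blocks of lengths $N, N-1, \dots, n$; in particular a significant block of length exactly $n$ exists. This establishes the Proposition for every $n$.

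I do not anticipate a serious obstacle; the argument is genuinely an ``immediate consequence'' as the text advertises. The only point requiring a little care is verifying that the single-symbol truncation in Lemma \ref{consecsig} is indeed the operation that decreases the length by one, so that the iterated application hits every intermediate length rather than skipping some; and verifying that a length-$n$ significant block is a bona fide block of $X$ (nonempty, with $n \ge 1$) so that the conclusion is meaningful. Once the indexing is pinned down, the finiteness of the alphabet and the monotone descent of Lemma \ref{consecsig} combine cleanly, so the proof is short.
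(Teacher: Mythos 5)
Your argument is correct and is exactly the one the paper intends: the paper gives no written proof, stating only that the Proposition is an immediate consequence of Lemma \ref{consecsig}, and your combination of the finite-alphabet counting (so lengths of significant blocks are unbounded) with iterated left-truncation via that lemma is the natural way to fill in that claim.
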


\section{Markov diagrams for one-sided Sturmian systems}\label{Sturmian}
\subsection{Basic properties of Sturmian sequences}
We recall the definition and basic properties of Sturmian sequences; see \cite[Ch. 6]{Fogg} for details.
 A one or two-sided sequence $u$ with values in a finite alphabet is called \emph{Sturmian} if it has complexity function (defined above) $p_u(n) = n+1$ for all $n$.
If $u$ is Sturmian, then $p_u(1) = 2$. This implies that Sturmian sequences are over a two-letter alphabet, so we fix the alphabet $\ca = \{0,1\}$.
Given a one-sided Sturmian sequence $u$, we let $X_u^+$ be the closure of $\{\sigma^nu | n \in \Bn\}$. Then $(X_u^+, \sigma)$ is the {\em Sturmian system} associated with $u$.

\begin{example}
\label{fibex}
The Fibonacci substitution is defined by: %The Fibonacci sequence, $f$, is the fixed point of the Fibonacci substitution
  	\begin{align*}
  	\phi: 0 & \mapsto 01
  	\\1 & \mapsto 0.
  	\end{align*}
The fixed point of the Fibonacci substitution, $f = 0100101001001010010100100101...$, is a Sturmian sequence, and $(X_f^+, \sigma)$ is the Sturmian system associated with $f$ (see \cite{Lothaire}).
\end{example}

An infinite sequence $u$ is \emph{periodic} (respectively \emph{eventually periodic}) if there exists a positive integer $M$ such that for every $n$, $u_n = u_{n+M}$ (respectively there exists $m \in \Bn$ such that for all $|n| \geq m$, $u_n = u_{n+M}$).
A set $S$ of blocks is \emph{balanced} if for any pair of blocks $u$, $v$ of the same length in $S$, $||u|_1 - |v|_1| \leq 1$, where $|u|_1$ is the number of occurrences of 1 in $u$ and $|v|_1$ is the number of occurrences of 1 in $v$.
It follows that if a sequence $u$ is balanced and not eventually periodic then it is Sturmian. This is a result of the fact that if $u$ is aperiodic, then $p_u(n) \geq n+1$ for all $n$, and if $u$ is balanced then $p_u(n) \leq n+1$ for all $n$. In fact, it can be proved that a sequence $u$ is balanced and aperiodic if and only if it is Sturmian \cite{Lothaire}. Furthermore, any shift of a Sturmian sequence is also Sturmian.

Sturmian sequences also have a natural association to lines with irrational slope. To see this, we introduce the following definitions.
 Let $\alpha$ and $\beta$ be real numbers with $0 \leq \alpha, \beta \leq 1$. We define two infinite sequences $x_{\alpha,\beta}$ and ${x'}_{\alpha,\beta}$ by
\begin{eqnarray*}
  (x_{\alpha,\beta})_n&=& \lfloor \alpha(n+1) + \beta \rfloor - \lfloor \alpha n + \beta \rfloor    \nonumber \\
  ({x'}_{\alpha,\beta})_n&=& \lceil \alpha(n+1) + \beta \rceil - \lceil \alpha n + \beta \rceil    \nonumber \\
\end{eqnarray*}
\noindent for all $n\geq 0$.
The sequence $x_{\alpha,\beta}$ is the \emph{lower mechanical sequence} and ${x'}_{\alpha,\beta}$ is the \emph{upper mechanical sequence} with slope $\alpha$ and intercept $\beta$.
The use of the words slope and intercept in the above definitions stems from the following graphical interpretation.
 The points with integer coordinates that sit just below the line $y= \alpha x + \beta$ are $F_n = (n, \lfloor \alpha n + \beta \rfloor).$ The straight line segment connecting two consecutive points $F_n$ and $F_{n+1}$  is horizontal if  $x_{\alpha,\beta} = 0$ and diagonal if  $x_{\alpha,\beta} = 1$. The lower mechanical sequence is a coding of the line $y= \alpha x + \beta$ by assigning to each line segment connecting $F_n$ and $F_{n+1}$ a 0 if the segment is horizontal and a 1 if the segment is diagonal. Similarly, the points with integer coordinates that sit just above this line are $F'_n = (n, \lceil \alpha n + \beta \rceil)$. Again, we can code the line  $y= \alpha x + \beta$ by assigning to each line segment connecting $F'_n$ and $F'_{n+1}$ a 0 if the segment is horizontal and a 1 if the segment is diagonal. This coding yields the upper mechanical sequence \cite{Lothaire}.

A mechanical sequence is $rational$ if the line $y= \alpha x + \beta$ has rational slope and  $irrational$ if $y= \alpha x + \beta$ has irrational slope. In \cite{Lothaire} it is proved that a sequence $u$ is Sturmian if and only if $u$ is irrational mechanical. In the following example we construct a lower mechanical sequence with irrational slope, thus producing a Sturmian sequence.

\begin{example}
Let $\alpha = 1/\tau^2$, where $\tau = (1+ \sqrt{5})/2$ is the golden mean, and $\beta = 0$. The lower mechanical sequence $x_{\alpha,\beta}$ is constructed as follows:

\begin{align*}
 (x_{\alpha, \beta})_0 = & \lfloor 1/\tau^2 \rfloor = 0  \\
 (x_{\alpha, \beta})_1= & \lfloor 2/\tau^2  \rfloor - \lfloor 1/\tau^2  \rfloor = 0 \\
 (x_{\alpha, \beta})_2= & \lfloor 3/\tau^2  \rfloor - \lfloor 2/\tau^2  \rfloor = 1 \\
 (x_{\alpha, \beta})_3= & \lfloor 4/\tau^2  \rfloor - \lfloor 3/\tau^2  \rfloor =  0\\
 (x_{\alpha, \beta})_4= & \lfloor 5/\tau^2  \rfloor - \lfloor 4/\tau^2  \rfloor =  0\\
 (x_{\alpha, \beta})_5= & \lfloor 6/\tau^2  \rfloor - \lfloor 5/\tau^2  \rfloor =  1 \\
 \vdots
\end{align*}
Further calculation shows that $x_{\alpha,\beta} = 0010010100... = 0f$, and ${x'}_{\alpha,\beta} = 1010010100...= 1f,$ hence the fixed point $f$ is a shift of the lower and upper mechanical sequences with slope $1/\tau^2$ and intercept $0$.
\end{example}

We now consider the language of a Sturmian sequence $u$. It is easy to show that while Sturmian sequences are aperiodic, they are syndetically recurrent \cite{Fogg}. As a result, any block in $\cl_n(u)$ appears past the initial position and can thus be extended on the left.  Since there are $n+1$ blocks of length $n$, it must be that exactly one of them can be extended to the left in two ways.
In a Sturmian sequence $u$, the unique block of length $n$ that can be extended to the left in two different ways is called a \emph{left special block}, and is denoted $L_n(u)$. The sequence $l(u)$ which has the $L_n(u)$'s as prefixes is called the \emph{left special sequence} or \emph{characteristic word} of $X_u^+$ \cite{Fogg, Lothaire}.
 Similarly,
in a Sturmian sequence $u$, the unique block of length $n$ that can be extended to the right in two different ways is called a \emph{right special block}, and is denoted $R_n(u)$. The block $R_n(u)$ is precisely the reverse of $L_n(u)$ \cite{Fogg}.

\subsection{The left special sequence}
Since every Sturmian sequence $u$ is irrational mechanical, there is a line with irrational slope $\alpha$ associated to $u$. This $\alpha$ can be used to determine the left special sequence of $X_u^+$.

Let $(d_1, d_2, ..., d_n,...)$ be a sequence of integers with $d_1 \geq 0 $ and $d_n > 0 $ for $n>1$. We associate a sequence $(s_n)_{n\geq -1}$ of blocks to this sequence by
\begin{center}
$s_{-1} = 1$, \hspace{3mm} $s_0 = 0$, \hspace{3mm} $s_n = s^{d_n}_{n-1}s_{n-2}. $
\end{center}
The sequence $(s_n)_{n\geq -1}$ is a \emph{standard sequence}, and  $(d_1, d_2, ..., d_n,...)$  is its \emph{directive sequence}. We can then determine the left special sequence of  $X_u^+$ with the following proposition stated in \cite{Lothaire}.
\begin{prop}
Let $\alpha = [0, 1+ d_1, d_2, ....]$ be the continued fraction expansion of an irrational $\alpha$ with $0< \alpha < 1$, and let $(s_n)$ be the standard sequence associated to $(d_1, d_2,...)$. Then every $s_n$, $n \geq 1$, is a prefix of $l$ and
$$l = \lim_{n\to \infty} s_n.$$
\label{lss}
\end{prop}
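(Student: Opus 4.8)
The plan is to show that for every $n \geq 1$ the standard word $s_n$ is a prefix of the left special sequence $l$; since consecutive $s_n$ are automatically nested (as $d_{n+1} \geq 1$, the word $s_{n+1} = s_n^{d_{n+1}} s_{n-1}$ begins with $s_n$) and their lengths increase to infinity, the two assertions of the proposition then follow at once: the $s_n$ are nested prefixes of $l$ of unbounded length, so $l = \lim_{n\to\infty} s_n$. Thus the whole statement reduces to identifying each $s_n$ with an initial segment of $l$.

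First I would carry out the length-and-count bookkeeping that pins down which initial segment. Writing the continued fraction as $\alpha = [0, a_1, a_2, \dots]$ with $a_1 = 1 + d_1$ and $a_k = d_k$ for $k \geq 2$, a short induction shows that both $|s_n|$ and the number of ones $|s_n|_1$ satisfy the convergent recursion $x_n = a_n x_{n-1} + x_{n-2}$ with the appropriate initial data, so that $|s_n| = q_n$ and $|s_n|_1 = p_n$, the denominator and numerator of the $n$-th convergent of $\alpha$. In particular the frequency of ones in the prefix $s_n$ is $p_n / q_n \to \alpha$; since $\alpha$ is irrational this already forces any sequence having all the $s_n$ as prefixes to be aperiodic, and it identifies the target segment of $l$ as its prefix of length $q_n$.

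The core of the argument is then the claim, proved by induction on $n \geq 1$, that the prefix of $l$ of length $q_n$ is exactly $s_n$. The base cases $n = 1, 2$ are direct computations: $s_1 = 0^{d_1}1$ is the length-$q_1$ prefix of the characteristic word of slope $\alpha$ because $a_1 = \lfloor 1/\alpha \rfloor$ (note $s_0 = 0$ itself need \emph{not} be a prefix, which is why the statement starts at $n \geq 1$). For $n \geq 3$ one must show that the length-$q_n$ prefix of $l$ equals $s_{n-1}^{a_n} s_{n-2}$; knowing merely that the shorter prefixes are $s_{n-1}$ and $s_{n-2}$ is not enough, since what is really needed is the self-similar, renormalization structure of the characteristic word along the continued fraction algorithm. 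I would establish it through the \emph{near-commutation} property of standard words: by an auxiliary induction, $s_{n}s_{n-1}$ and $s_{n-1}s_{n}$ agree in all but their last two letters, which are transposed. This makes each $s_n$ a \emph{central word}, shows that the long common prefix persists when one reassembles $s_{n-1}^{a_n}s_{n-2}$, and simultaneously exhibits arbitrarily long prefixes of $l$ as blocks admitting two distinct left extensions, i.e. as left special blocks. Since a Sturmian system has exactly one left special block of each length (as recalled above), this pins the length-$q_n$ prefix of $l$ to be $s_n$ and closes the induction.

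The main obstacle is exactly this inductive step: making the near-commutation lemma and the resulting central-word structure precise, and verifying that the two left extensions it produces genuinely occur in $\cl(X_u^+)$ — which uses that all Sturmian sequences of a fixed slope $\alpha$ share the same language, so that the left special blocks of $X_u^+$ depend only on $\alpha$. An alternative that sidesteps the word combinatorics is to identify $l$ with an explicit mechanical sequence $x_{\alpha,\beta}$ and prove the same prefix claim by a floor-function computation that renormalizes $\alpha$ one partial quotient at a time via the Gauss map; this trades the combinatorial lemma for a calculation with Beatty-type sums but reaches the identical conclusion.
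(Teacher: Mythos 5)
The paper does not actually prove this proposition: it is quoted verbatim from Lothaire (\cite{Lothaire}), and the text that follows it consists only of the two worked examples (slopes $1/\tau^2$ and $\pi/4$). So there is no in-paper argument to compare yours against, and your proposal should be judged as a reconstruction of the standard proof. As such it is sound: the reduction to ``each $s_n$ is a prefix of $l$'' via the nesting $s_{n+1}=s_n^{d_{n+1}}s_{n-1}$ is correct (and your remark that $s_0=0$ may fail to be a prefix when $d_1=0$ is exactly why the statement starts at $n\geq 1$); the identification $|s_n|=q_n$, $|s_n|_1=p_n$ with the convergents checks out, including the shift $a_1=1+d_1$; and the near-commutation property of standard words is the right engine for the inductive step. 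The one place your outline is genuinely thin is the passage from near-commutation to \emph{left} special blocks: writing $s_ns_{n-1}=wab$ and $s_{n-1}s_n=wba$ and verifying that both occur in the language directly exhibits $w$ as \emph{right} special ($wa$ and $wb$ are both factors), not left special. To conclude that $w$ --- and hence, after extending by two more steps of the recursion, each $s_n$ --- is a prefix of $l$, you need either the palindromicity of the central words $w$ or the reversal symmetry $R_n(u)=\widetilde{L_n(u)}$ that the paper records just before this proposition; this should be stated explicitly rather than folded into ``simultaneously exhibits arbitrarily long prefixes of $l$ as blocks admitting two distinct left extensions.'' Your proposed alternative via mechanical sequences and renormalization of $\alpha$ by the Gauss map is in fact the route Lothaire takes, so either completion would be acceptable.
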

This is illustrated in the following two examples.

\begin{example}
\label{fibex2}
Let  $\alpha = 1/\tau^2$, where $\tau = (1+ \sqrt{5})/2$ is the golden mean. The continued fraction expansion of $1/\tau^2$ is $[ 0,2,\overline{1} ]$. By the above proposition $d_1 = 1, d_2= 1, \\ d_3 = 1, d_4 = 1, ... $. The standard sequence associated to  $(d_1, d_2,...)$ is constructed as follows:

\begin{center}
\begin{align*}
s_1 =  & s_0^{d_1}s_{-1} =  01
\\ s_2 = & s_1^{d_2}s_0 =   010
\\ s_3 = & s_2^{d_3}s_1 =   01001
\\ s_4 = & s_3^{d_4}s_2 =   01001010
\\ \vdots
\end{align*}

\end{center}
Continuing this process, the left special sequence of $X_u^+$, where $u$ is a coding of a line with slope $1/\tau^2$, is
$$l = 010010100100101001 ... = f.$$ It follows that the left special sequence of $X_f^+$ is $f$.
\end{example}

\begin{example}
Let $\alpha = \pi/4$. The continued fraction expansion of $\pi/4$ is \[ [0, 1,3,1,1, 1, 15, 2, 72, ...] .\] By Proposition \ref{lss} $d_1 = 0, d_2 = 3, d_3 = 1, d_4 = 1, ...$. Then,

\begin{center}
\begin{align*}
s_1 =  & s_0^{d_1}s_{-1} =  1
\\ s_2 = & s_1^{d_2}s_0 =   111 0
\\ s_3 = & s_2^{d_3}s_1 =   1110 1
\\ s_4 = & s_3^{d_4}s_2 =   11101 1110
\\ \vdots
\end{align*}

\end{center}
Continuing this process, the left special sequence of $X_u^+$, where $u$ is a coding of a line with slope $\pi/4$, is
$$l = 11101111011101111011110....$$
\end{example}

\subsection{Significant blocks of a one-sided Sturmian system}
In order to construct the HB diagram of a Sturmian system, it is necessary to identify the significant blocks of the system. We first note that if  $X$ is any subshift of $\Sigma_d$, then $0, 1,..., d-1$, and $d$ are significant blocks provided $0, 1,..., d-1$, and $d$ are all in $\cl(\natext)$. Hence, $0$ and $1$ are significant blocks of any Sturmian system. Let $(X_u^+, \sigma)$ be a Sturmian system with $l = l_1l_2l_3...$ the left special sequence of $u$. In the next two propositions we prove that given $n \geq 1$, there are exactly two significant blocks of $\tilde{X}_u$ with length $n$.

The first proposition applies to any subshift of $\Sigma_2.$
\begin{prop}
Let $X \subset \Sigma_2$ be a subshift. Suppose $a_{-n+1}a_{-n+2}...a_{-1}a_0$ is a block of length $n$ in $\cl(\natext)$. If $a_{-n+1}a_{-n+2}...a_{-1}a_0$ is significant, then $0a_{-n+2}...a_{-1}a_0$ and $1a_{-n+2}...a_{-1}a_0$ are in $\cl(\natext)$.
\label{leftextend}
\end{prop}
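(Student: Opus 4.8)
The plan is to reduce the claim to a statement about the one forbidden left extension and to exploit that we are over a two-letter alphabet. Write $w = a_{-n+1}a_{-n+2}\cdots a_0$ and let $v = a_{-n+2}\cdots a_0$ be the block obtained by deleting the leading symbol, so that significance of $w$ means precisely $\fol(w)\subsetneq\fol(v)$. First I would record the trivial monotonicity that holds with no hypotheses: any $b\in\natext$ realizing $w$ at positions $-n+1,\dots,0$ also realizes $v$ at positions $-n+2,\dots,0$, so the same $b$ shows every ray that follows $w$ also follows $v$, giving $\fol(w)\subseteq\fol(v)$. Since $a_{-n+1}\in\{0,1\}$, one of the two length-$n$ blocks $0v$ and $1v$ equals $w$ and hence already lies in $\cl(\natext)$; it therefore suffices to show the other one lies there too. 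Assuming without loss of generality $a_{-n+1}=0$, so that $0v=w\in\cl(\natext)$, the entire task becomes proving $1v\in\cl(\natext)$.

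I would establish this by contradiction. Suppose $1v\notin\cl(\natext)$. Because $X\subset\Sigma_2$, this means that inside any point of $\natext$ every occurrence of the block $v$ is preceded by the symbol $0$, the only alternative predecessor $1$ being forbidden. Now take an arbitrary ray $\rho\in\fol(v)$ with a witnessing point $b\in\natext$ satisfying $b_{-n+2}\cdots b_0=v$ and $b_0b_1\cdots=\rho$. The symbol $b_{-n+1}$ is exactly the predecessor of this occurrence of $v$, so $b_{-n+1}=0$, whence $b_{-n+1}\cdots b_0=0v=w$ and the same $b$ witnesses $\rho\in\fol(w)$. This yields $\fol(v)\subseteq\fol(w)$, which together with the monotonicity above forces $\fol(w)=\fol(v)$, contradicting the significance of $w$. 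Hence $1v\in\cl(\natext)$, and combined with $0v=w\in\cl(\natext)$ both left extensions lie in $\cl(\natext)$, as claimed.

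The argument is short, and its only genuine content is the observation that over a two-letter alphabet the failure of $v$ to be left-extendable in two ways leaves $0$ as a forced predecessor, which instantly collapses $\fol(v)$ onto $\fol(w)$. I do not anticipate a serious obstacle; the one point to handle carefully is bookkeeping of indices and the overlap at position $0$ in the definition of $\fol$ (the ray $\rho$ begins at the last symbol $a_0$ of $w$, so the witnessing point is shared and not merely concatenated). I would also flag that the restriction $X\subset\Sigma_2$ is essential — it is precisely what makes ``excluding one predecessor forces the other'' valid — so neither the statement nor this proof should be expected to carry over verbatim to alphabets of size greater than two.
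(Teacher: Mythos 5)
Your proof is correct and follows essentially the same route as the paper's: assume the extension $1v$ is forbidden, observe that the predecessor of every occurrence of $v$ in a point of $\natext$ is then forced to be $0$, deduce $\fol(v)\subseteq\fol(w)$ and hence equality of follower sets, contradicting significance. The only difference is cosmetic bookkeeping in how the without-loss-of-generality reduction is phrased.
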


\begin{proof}
Assume that the block $a_{-n+1}a_{-n+2}...a_{-2}a_{-1}a_{0}$ is significant.  Suppose on the contrary that  $0a_{-n+2}...a_{-2}a_{-1}a_{0}$ and $1a_{-n+2}...a_{-2}a_{-1}a_{0}$ are not both in $\cl(\natext)$. Without loss of generality, suppose $1a_{-n+2}...a_{-2}a_{-1}a_{0} \notin \cl(X)$. So,
\begin{center} $a_{-n+1}a_{-n+2}...a_{-2}a_{-1}a_{0} = 0a_{-n+2}...a_{-2}a_{-1}a_{0}$. \end{center}

Let $b_0b_1...  \in \text{fol} (a_{-n+2}a_{-n+3}...a_0)$. Then there exists $b$, a two-sided sequence in $\natext$, such that $ b_{-n+2}...b_{0} = a_{-n+2}...a_0$. However, $1a_{-n+2}...a_{-2}a_{-1}a_{0} \notin \cl(X)$ implies that $b_{-n+1} =0$. Then $b$ is such that $ b_{-n+1}...b_{0} = a_{-n+1}...a_0$ and so $b_{0}b_{1}... \in \text{fol} (a_{-n+1}a_{-n+3}...a_0)$. Thus $\text{fol} (a_{-n+2}a_{-n+3}...a_0) \subseteq \text{fol}(a_{-n+1}a_{-n+2}...a_0)$. Since it is always the case that $\text{fol} (a_{-n+1}a_{-n+2}...a_0)  \subseteq \text{fol}(a_{-n+2}a_{-n+3}...a_0)$, this implies
\begin{center}$\text{fol} (a_{-n+1}a_{-n+2}...a_0)  = \text{fol}(a_{-n+2}a_{-n+3}...a_0)$, \end{center}
contradicting $a_{-n+1}a_{-n+2}...a_{-2}a_{-1}a_{0}$ being a significant block.
\end{proof}

Before we state the second proposition, recall that since $u$ is recurrent, every block is extendable to the left. Hence, $\cl(u) = \cl(X_u^+) = \cl(\natext_u).$

\begin{prop} Let $u$ be a Sturmian sequence, with $(X_u^+, \sigma)$ its associated Sturmian system. Then for each $n \geq 2$ there are exactly two significant blocks of $\tilde{X}_u$ of length $n$. The two significant blocks of length $n$ are $0L_{n-1} = 0l_1...l_{n-1}$ and $1L_{n-1}=1l_1...l_{n-1}$.
\label{Prop1}
\end{prop}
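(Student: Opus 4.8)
The plan is to prove the two halves of the statement separately: that every significant block of length $n$ must be $0L_{n-1}$ or $1L_{n-1}$, and that both of these really are significant. For the ``at most two'' direction I would start from a significant block $w=a_{-n+1}a_{-n+2}\ldots a_0$ of length $n\ge 2$ and apply Proposition \ref{leftextend}, which yields that both $0a_{-n+2}\ldots a_0$ and $1a_{-n+2}\ldots a_0$ lie in $\cl(\tilde X_u)$. Thus the length-$(n-1)$ suffix $a_{-n+2}\ldots a_0$ can be extended to the left in two ways, so it is the unique left special block $L_{n-1}=l_1\ldots l_{n-1}$. Hence $w\in\{0L_{n-1},1L_{n-1}\}$, and since $L_{n-1}$ is left special both of these blocks do belong to $\cl(\tilde X_u)$. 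This already shows there are at most two significant blocks of length $n$ and identifies the only candidates.

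The substance is showing that $0L_{n-1}$ and $1L_{n-1}$ are both significant. First I would record the decomposition $\fol(L_{n-1})=\fol(0L_{n-1})\cup\fol(1L_{n-1})$, which holds because any ray following $L_{n-1}$ extends to a bi-infinite point whose symbol immediately to the left of $L_{n-1}$ is $0$ or $1$. Given this, it suffices to produce, for each $d\in\{0,1\}$, a ray in $\fol(L_{n-1})$ but not in $\fol(dL_{n-1})$, i.e.\ a block $v$ with $L_{n-1}v\in\cl(\tilde X_u)$ but $dL_{n-1}v\notin\cl(\tilde X_u)$. Because the Sturmian language is closed under reversal and $R_{n-1}=L_{n-1}^{R}$, passing to reversed blocks $x=v^{R}$ turns this into finding a left extension $xR_{n-1}\in\cl(\tilde X_u)$ whose unique right continuation is $\bar d$, that is $xR_{n-1}\bar d\in\cl(\tilde X_u)$ while $xR_{n-1}d\notin\cl(\tilde X_u)$.

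I would then exploit the right special structure. Since $p_u(n)=n+1$, there is exactly one right special factor of each length, and these form the suffix chain $R_{n-1}\subset R_n\subset\cdots$ with $R_m=l_mR_{m-1}$; consequently a left extension $xR_{n-1}$ is right special precisely when it is one of the $R_m$, and otherwise has a unique right continuation. As $R_{n-1}$ is right special, both $R_{n-1}0$ and $R_{n-1}1$ lie in $\cl(\tilde X_u)$, and I claim each continuation letter is forced by some non-right-special left extension. Indeed, if a letter $c$ were never forced, so that every left extension of $R_{n-1}$ followed by $c$ were right special, hence some $R_m$, then every occurrence of $R_{n-1}c$ would be immediately preceded by arbitrarily long prefixes of the reversed characteristic word, forcing its entire left context to be the single special left ray. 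Every occurrence of $R_{n-1}c$ would then lie in the finite set of points having that fixed left half, contradicting syndetic recurrence, which guarantees infinitely many occurrences of $R_{n-1}c$ with bounded gaps in each point. Thus both $c=0$ and $c=1$ are forced continuations; translating back through the reversal produces the two required rays and shows that $0L_{n-1}$ and $1L_{n-1}$ are both significant, so together with the first part there are exactly two significant blocks of length $n$.

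The step I expect to be the main obstacle is precisely this last ``forcing both letters'' argument: it is where the genuinely Sturmian input enters, through the fact that the right special factors form a single suffix chain combined with syndetic recurrence. By contrast, the ``at most two'' direction and the reversal reduction are essentially formal, relying only on Proposition \ref{leftextend}, uniqueness of the left special block of each length, and mirror invariance of the language.
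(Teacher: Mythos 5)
Your argument is correct in substance but reaches the significance of $0L_{n-1}$ and $1L_{n-1}$ by a genuinely different route from the paper. The reduction to the two candidates via Proposition \ref{leftextend} and uniqueness of the left special block is exactly the paper's first step. For significance, however, the paper splits into two cases according to whether some extension $0L_{n-1}\nu$ is right special: in the main case it invokes the \emph{balance} property, noting that $0L_{n-1}\nu 0\in\cl(\tilde X_u)$ forces $1L_{n-1}\nu 1\notin\cl(\tilde X_u)$ because the two blocks would differ by $2$ in their number of $1$'s, which immediately produces a ray in $\text{fol}(L_{n-1})\setminus\text{fol}(1L_{n-1})$; in the degenerate case it derives eventual periodicity of $u$ from a forced right continuation. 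You avoid balance entirely, working instead with reversal-closure of the Sturmian language (a standard fact, though the paper records only the weaker statement $R_m=L_m^{R}$) and the fact that the right special factors form a single suffix chain, and you use the ``forced context plus recurrence implies periodicity'' mechanism as the main engine rather than only to exclude a degenerate case. Your version trades brevity for portability: it could adapt to other reversal-closed languages with a unique right special factor per length where balance is unavailable. One step should be tightened: the assertion that all occurrences of $R_{n-1}c$ ``lie in the finite set of points having that fixed left half'' is not justified as written, since finiteness of that set is itself a claim requiring proof. The clean finish, implicit in your sketch and explicit in the paper's Case 2, is that two occurrences of $R_{n-1}c$ at distance $T$ within a single point of $\tilde X_u$ would force the reversed characteristic word $\dots l_3l_2l_1$ to be $T$-periodic, so that $p_u$ would be bounded, contradicting $p_u(k)=k+1$; with that repair the proof is complete.
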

\begin{proof}
By Proposition \ref{leftextend} we know that if the block $a_{-n+1}a_{-n+3}...a_{-2}a_{-1}a_{0}$ is significant then $0a_{-n+2}...a_{-2}a_{-1}a_{0}$ and $1a_{-n+2}...a_{-2}a_{-1}a_{0}$ are in $\cl(\natext_u)$. Thus the only possible significant blocks of length $n$ are those blocks  $a_{-n+1}...a_{-2}a_{-1}a_{0}$ such that  $a_{-n+2}...a_{-2}a_{-1}a_{0}$ can be extended to the left in two ways. That is, the possible significant blocks are $a_{-n+1}a_{-n+2}...a_{-2}a_{-1}a_{0}$ with $$a_{-n+2}...a_{-2}a_{-1}a_{0} = L_{n-1}.$$

Let $L_{n-1} = a_{-n+2}...a_{0}$ and $\nu \in \mathcal{L}(u)$. Since there is exactly one right special block of each length $n \in \Bn$, $0L_{n-1}\nu$ and $1L_{n-1}\nu$ cannot both be right special blocks. We first prove that $1L_{n-1}$ is significant by considering the following two cases.

\emph{Case 1:} There exists $\nu \in \cl(u)$ such that $0L_{n-1}\nu$ is right special. Then $1L_{n-1}\nu$ is not right special. This implies that $1L_{n-1}\nu1$ is not in $\mathcal{L}(u)$, since $$||0L_{n-1}\nu0|_1 - |1L_{n-1}\nu1|_1| = 2,$$ which is not permitted as $u$ is balanced.  Thus there exists a ray $a_{0}\nu1... $ in  $\text{fol}(L_{n-1})$ that is not in $\text{fol}(1L_{n-1})$. Hence, $1L_{n-1}$ is significant.

\emph{Case 2:} There does not exist $\nu \in \cl(u)$ such that $0L_{n-1}\nu$ is right special. This implies that there exists exactly one ray $b_1b_2...$ that can follow $0L_{n-1}$. We claim that because $u$ is Sturmian such a case cannot occur.

Let $u = c_0c_1c_2c_3...$. Since $0L_{n-1} \in \cl(u)$, we know that $0L_{n-1}$ appears in $u$ infinitely many times. Suppose $0L_{n-1}$ appears for the first time starting at position $c_{m+1}$. Letting $0L_{n-1} = a_{-n+2}...a_{0}$, we have
$$u =c_0c_1c_2c_3...c_m0L_{n-1}b_1b_2... = c_0c_1c_2c_3...c_m0a_{-n+2}...a_{0}b_1b_2... $$

Furthermore, there exists $r \in \mathbb{N}$ such that $0L_{n-1}$ appears again starting at $b_{r+1}.$ As $0L_{n-1}$ can be followed only by $b_1b_2...$, this implies that
$$u = c_0c_1c_2c_3...c_m0a_{-n+2}...a_{0}b_1b_2...b_r0a_{-n+2}...a_{0}b_1b_2...b_r0a_{-n+2}...a_{0}b_1b_2...b_r...$$

Letting $B = 0a_{-n+2}...a_{0}b_1b_2...b_r$, we have that
$$u = c_0c_1c_2c_3...c_mBBBB... .$$
Thus, $u$ is eventually periodic. This, however, is a contradiction as Sturmian sequences are not eventually periodic. Hence $1L_{n-1}$ is a significant block of $\natext_u$.

By the same argument, it can be shown that $0L_{n-1}$ is also a significant block of $\natext_u$.
\end{proof}

\subsection{Construction of the diagram}
Recall that the HB diagram of a one-sided Sturmian system is defined to be an oriented graph whose vertices are the significant blocks of $\tilde{X}_u$ and whose arrows are defined by $$a_{-n}...a_0 \to b_{-m}...b_0$$ if and only if  $a_{-n}...a_0b_0 \in \cl (\tilde{X}_u)$ and $$b_{-m}...b_0 = \text{sig}(a_{-n}...a_0b_0).$$ Having determined the significant blocks of $\tilde{X}_u$, it remains only to determine the arrows. This will give us a complete description of the HB diagram of an arbitrary one-sided Sturmian system.

Let $l = l_1l_2l_3...$ be the left special sequence of $X_u^+$. We first consider the arrows leaving the significant blocks of length 1.

\begin{lemma}
If $l_1 = 0$, then $0 \to 1$, $0 \to 00$ and $1 \to 10$. If $l_1 = 1$, then $1 \to 0$, $ 1 \to 11$ and $0 \to 01$.
\label{Lemma1}
\end{lemma}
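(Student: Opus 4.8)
The plan is to work directly from the definitions of the arrows and the significant blocks, using the symmetry between the two cases $l_1 = 0$ and $l_1 = 1$ so that I only need to argue one case in full. Assume $l_1 = 0$; the case $l_1 = 1$ follows by exchanging the roles of the symbols $0$ and $1$. First I would recall that by the remarks preceding Lemma~\ref{Lemma1}, both $0$ and $1$ are significant blocks of length $1$, and by Proposition~\ref{Prop1} the significant blocks of length $2$ are exactly $0L_1 = 0l_1$ and $1L_1 = 1l_1$. Since $l_1 = 0$, these are $00$ and $10$. So the only candidate targets for the arrows claimed in the statement are the four significant blocks $0,1,00,10$, and my task reduces to checking, for each relevant one-letter source block, which single-symbol extensions lie in $\cl(\tilde X_u)$ and computing their significant forms.

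Next I would handle the arrows leaving the vertex $0$. By the arrow definition, $0 \to \sig(0a)$ for each $a$ with $0a \in \cl(\tilde X_u)$. Because $L_1 = l_1 = 0$ is the unique length-$1$ block extendable to the left in two ways, $0$ is followed (in the recurrent Sturmian language) by both $0$ and $1$, so both $00$ and $01$ are in the language. I then compute $\sig(00)$ and $\sig(01)$: by Proposition~\ref{Prop1}, $00$ is itself significant (it equals $0L_1$), so $\sig(00) = 00$, giving the arrow $0 \to 00$; whereas $01$ is \emph{not} among the two significant length-$2$ blocks $\{00,10\}$, so its significant form is its longest significant suffix, which is the length-$1$ block $1$, giving $0 \to 1$. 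I would spell out here why $01$ fails to be significant (it is not of the form $aL_1$ with $a\in\{0,1\}$) and why the suffix $1$ is significant (it is one of the length-$1$ significant blocks).

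For the vertex $1$ I would argue that $1$ can only be followed by $0$: since $l_1 = 0$ the block $11$ would force an imbalance incompatible with the balance property of Sturmian sequences (or, more directly, $11 \notin \cl(u)$ because the left-special—equivalently via reversal the right-special—structure forbids it when $l_1=0$). Hence the only extension is $10$, and since $10 = 1L_1$ is significant by Proposition~\ref{Prop1}, $\sig(10) = 10$, yielding $1 \to 10$. The main obstacle, and the step I would be most careful about, is justifying cleanly that $01$ is non-significant while $1$ \emph{is} significant, i.e.\ correctly identifying the significant form as the longest significant suffix rather than simply truncating by one letter; and, in the $1$-vertex case, rigorously ruling out the extension $11$ using balance. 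Once these follower-set and significance computations are pinned down, the stated arrows follow immediately, and the $l_1 = 1$ case is obtained by the symbol-swap symmetry noted at the outset.
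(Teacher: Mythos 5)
Your proposal is correct and follows essentially the same route as the paper's proof: identify the length-two significant blocks via Proposition~\ref{Prop1}, compute $\sig(00)$ and $\sig(01)$ (resp.\ $\sig(11)$ and $\sig(10)$), and rule out $11$ (resp.\ $00$) by the balance property. The one small wrinkle is that ``$0$ is the unique left special block of length one'' does not by itself tell you what can \emph{follow} $0$; you need that $R_1(u)$ is the reverse of $L_1(u)$, hence $0$ is also the unique right special block of length one --- which is exactly the fact the paper invokes, and which you do acknowledge later in your argument.
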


\begin{proof}
Suppose $l_1 = 0$. By definition $$0 \to b_{-m}...b_0$$ if and only if  $0b_0 \in \cl (\tilde{X}_u)$ and $$b_{-m}...b_0 = \text{sig}(0b_0).$$  As $b_0 \in \{0,1\}$, we consider $\text{sig}(00)$ and $\text{sig}(01)$. Proposition \ref{Prop1} implies that the significant blocks of length two are 00 and 10, since $l_1 = 0$. Thus  $\text{sig}(00) = 00$ and $\text{sig}(01) = 1$. Hence, $0 \to 1$ and $0 \to 00$. Additionally, consider $\sig(1b_0)$. Since $0$ is the unique right special block of length one, the balance property implies that $11 \notin \cl(\tilde{X}_u)$. Thus there is exactly one arrow leaving the block $1$, $1 \to \sig(10)$, where $\sig(10)=10$ by Proposition \ref{Prop1}.

Similarly, if  $l_1 = 1$ we consider $\text{sig}(10)$ and $\text{sig}(11)$. In this case, Proposition \ref{Prop1} implies that the significant blocks of length two are 10 and 11. Thus  $\text{sig}(10) = 0$ and $\text{sig}(11) = 11$ and $1 \to 0$ and $ 1 \to 11$. Furthermore, the only arrow leaving $0$ is given by $0 \to \sig(01)$, where $\sig(01)=01$ by Proposition \ref{Prop1}.

\end{proof}

Now consider an arbitrary significant block $xl_1l_2...l_{n-1}$, where $x$ is either 0 or 1. Again,  $$xl_1l_2...l_{n-1} \to b_{-m}...b_0$$ if and only if  $xl_1l_2...l_{n-1}b_0 \in \cl (\tilde{X}_u)$ and $$b_{-m}...b_0 = \text{sig}(xl_1l_2...l_{n-1}b_0).$$

We consider what may follow $xl_1l_2...l_{n-1}$. There can be at most two arrows out of $xl_1l_2...l_{n-1}$, as $b_0 \in \{0,1\}$. It is always the case that $xl_1l_2...l_{n-1}l_n \in \cl(\tilde{X}_u)$. Letting $b_0 = l_n$, we get $$xl_1l_2...l_{n-1} \to b_{-m}...b_{-1}l_n$$ if and only if  $$b_{-m}...b_{-1}l_n = \text{sig}(xl_1l_2...l_{n-1}l_n).$$ However, $xl_1l_2...l_{n-1}l_n$ is significant; thus it must be that $b_{-m}...b_0 = xl_1l_2...l_{n-1}l_n$. Hence, we are guaranteed the arrow $$xl_1l_2...l_{n-1} \to xl_1l_2...l_{n}.$$ This is stated below.

\begin{lemma}
Let $xL_{n-1} = xl_1l_2...l_{n-1}$, $n>1$, $x \in \{0,1\}$, be a significant block of $\tilde{X}_u$. Then
  $$xl_1l_2...l_{n-1} \to xl_1l_2...l_{n}.$$
\label{Lemma2}
\end{lemma}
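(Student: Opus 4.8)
The plan is to unwind the definition of an arrow in the HB diagram and reduce the two required conditions to a single application of Proposition \ref{Prop1}. By definition, $xl_1\dots l_{n-1} \to b_{-m}\dots b_0$ holds precisely when $xl_1\dots l_{n-1}b_0 \in \cl(\tilde{X}_u)$ and $b_{-m}\dots b_0 = \sig(xl_1\dots l_{n-1}b_0)$. To produce the asserted arrow I would take $b_0 = l_n$, so that the candidate target is $xl_1\dots l_{n-1}l_n$. Since $l = l_1 l_2 \dots$ is the left special sequence and its length-$m$ prefixes are exactly the left special blocks $L_m$, we have $l_1\dots l_{n-1}l_n = L_n$, and hence $xl_1\dots l_{n-1}l_n = xL_n$.

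The core step is to invoke Proposition \ref{Prop1} at length $n+1$, which is legitimate because $n > 1$ forces $n+1 \geq 3$. That proposition says the only significant blocks of length $n+1$ are $0L_n$ and $1L_n$, so for either choice $x \in \{0,1\}$ the block $xL_n$ is significant. Being significant, $xL_n$ certainly lies in $\cl(\tilde{X}_u)$, which is exactly the first condition $xl_1\dots l_{n-1}l_n \in \cl(\tilde{X}_u)$; this simultaneously confirms that $l_n$ is a legitimate continuation of $xl_1\dots l_{n-1}$.

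It then remains to check the significant-form condition $\sig(xL_n) = xL_n$. Here I would appeal directly to the definition of $\sig$: the significant form of a block is its longest suffix that is itself significant. Since $xL_n$ is significant, that longest significant suffix is the whole block, whence $\sig(xL_n) = xL_n$. Both conditions now hold with $b_{-m}\dots b_0 = xL_n = xl_1\dots l_n$, giving the arrow $xl_1\dots l_{n-1} \to xl_1\dots l_n$.

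I do not expect a serious obstacle: once Proposition \ref{Prop1} is available the argument is essentially bookkeeping. The only points requiring care are lining up the indices so that Proposition \ref{Prop1} is applied to length $n+1$ rather than $n$, recognizing that $xl_1\dots l_{n-1}l_n$ is literally $xL_n$ via the defining prefix property of the left special sequence, and observing that a significant block is its own significant form. It is also worth recalling the remark preceding Proposition \ref{Prop1} that $\cl(u) = \cl(X_u^+) = \cl(\tilde{X}_u)$, so that ``significant'' and ``in the language'' refer to the same collection of blocks.
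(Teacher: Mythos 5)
Your proposal is correct and follows essentially the same route as the paper: choose $b_0 = l_n$, observe via Proposition \ref{Prop1} that $xL_n$ is significant and hence in $\cl(\tilde{X}_u)$ with $\sig(xL_n) = xL_n$, and conclude the arrow exists. The only (harmless) difference is that you derive the membership $xL_n \in \cl(\tilde{X}_u)$ from its significance, whereas the paper simply asserts it.
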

\noindent It follows from Lemma \ref{Lemma2} that the left special sequence of $X_u^+$ is seen in the diagram by reading off the last symbol in the paths $0l_1 \to 0l_1l_2 \to 0l_1l_2l_3 \to \cdots$ and $1l_1 \to 1l_1l_2 \to 1l_1l_2l_3 \to \cdots$.

Now suppose $xl_1l_2...l_{n-1}y \in \cl(\tilde{X}_u)$ and $y \neq l_n$. This occurs if and only if $xl_1l_2...l_{n-1}$ is a right special block. Since $R_n(u)$ is the reverse of $L_n(u) = l_1l_2...l_n$, we get the following lemma.

\begin{lemma}
Let $xl_1l_2...l_{n-1}$ be a significant block of $\tilde{X}_u$. In the HB diagram of $X_u^+$, two arrows leave $xl_1l_2...l_{n-1}$ if and only if $xl_1l_2...l_{n-1}$ is a right special block, equivalently if and only if $xl_1l_2...l_{n-1}= l_nl_{n-1}...l_2l_1$.
\label{Lemma3}
\end{lemma}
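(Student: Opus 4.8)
The plan is to identify the arrows leaving a significant vertex with the symbols by which the underlying block can be extended to the right, and then to translate ``two such symbols'' into the language of right special blocks. Throughout, fix the significant block $w = xl_1l_2\ldots l_{n-1}$ of length $n$, where $x \in \{0,1\}$.

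First I would establish that the arrows out of $w$ are in bijection with the symbols $b_0 \in \{0,1\}$ for which $wb_0 \in \cl(\tilde X_u)$. By the definition of the HB diagram, each such $b_0$ produces exactly one arrow $w \to \sig(wb_0)$, and conversely every arrow out of $w$ arises this way. Moreover distinct choices of $b_0$ yield distinct arrows: the terminal symbol of $\sig(wb_0)$ is $b_0$ itself, so two different values of $b_0$ cannot give the same target vertex. Hence the number of arrows leaving $w$ equals the number of symbols by which $w$ extends to the right in $\tilde X_u$, which is at most two since $\ca = \{0,1\}$.

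Next, Lemma \ref{Lemma2} guarantees that $wl_n \in \cl(\tilde X_u)$, so $b_0 = l_n$ always furnishes one arrow. Consequently a second arrow leaves $w$ if and only if the remaining symbol $y \neq l_n$ also satisfies $wy \in \cl(\tilde X_u)$, that is, if and only if $w$ can be extended to the right in two different ways. By definition this says precisely that $w$ is a right special block. Finally, since $X_u^+$ is Sturmian there is exactly one right special block of length $n$, namely $R_n(u)$, and $R_n(u)$ is the reversal of $L_n(u) = l_1l_2\ldots l_n$; thus $w$ is right special exactly when $w = l_nl_{n-1}\ldots l_2l_1$. Combining these equivalences gives the statement.

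The argument is short because the preceding lemmas do the heavy lifting; the only point that requires genuine care is the injectivity of the assignment $b_0 \mapsto \sig(wb_0)$, which ensures that ``two admissible right extensions'' really does mean ``two arrows'' rather than a single arrow reached in two ways. The other thing to check carefully is the bookkeeping in the reversal step: $w$ has length $n$ while its defining data $l_1\ldots l_{n-1}$ has length $n-1$, so one must confirm that the relevant right special block is $R_n(u)$ of length $n$, matching $L_n(u)$ of length $n$ under reversal.
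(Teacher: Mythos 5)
Your proposal is correct and follows essentially the same route as the paper: the paper's justification is the discussion preceding the lemma, which likewise notes that arrows out of $xl_1\ldots l_{n-1}$ correspond to admissible right extensions $b_0\in\{0,1\}$, that $b_0=l_n$ always works by Lemma \ref{Lemma2}, that a second extension exists exactly when the block is right special, and that the unique right special block of length $n$ is the reversal of $L_n(u)$. Your explicit check that distinct $b_0$ give distinct arrows (since the last symbol of $\sig(wb_0)$ is $b_0$) is a small point the paper leaves implicit, but it is not a different argument.
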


Suppose $xl_1l_2...l_{n-1}$ is a right special significant block, where $n>1$. Let $wl_1...l_{m-1}$, $1 \leq m < n$ be the \emph{previous} right special significant block of $\tilde{X}_u$. In other words, there is no right special significant block of length greater than $m$ and less than $n$. By definition $xl_1l_2...l_{n-1}= l_nl_{n-1}...l_2l_1$ and $wl_1...l_{m-1} = l_ml_{m-1}...l_2l_1$. We claim the following.

\begin{lemma}
Let $xl_1l_2...l_{n-1}$ and $wl_1...l_{m-1}$ be consecutive right special significant blocks as described and suppose $y \neq l_{n}$. Then
$$\text{\emph{sig}}(xl_1l_2...l_{n-1}y) = \text{\emph{sig}}(wl_1...l_{m-1}y).$$
\label{intsig}
\end{lemma}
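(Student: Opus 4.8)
The plan is to reduce the claimed equality of significant forms to a bound on the length of the \emph{longest significant suffix} of the longer block, and then to read that bound off from the palindromic structure forced by right-specialness. First I would record a general principle about $\sig$: if $W'$ is a suffix of $W$ and $\sig(W)$ has length at most $|W'|$, then $\sig(W')=\sig(W)$. This holds because every significant suffix of $W'$ is a significant suffix of $W$, while conversely $\sig(W)$, being a suffix of $W$ of length at most $|W'|$, is already a suffix of $W'$; since it is the longest significant suffix of $W$ it is \emph{a fortiori} the longest significant suffix of $W'$.

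Next I would check that $W':=wl_1\ldots l_{m-1}y$ is a suffix of $W:=xl_1\ldots l_{n-1}y$. Since both left blocks are right special, $xl_1\ldots l_{n-1}=l_nl_{n-1}\ldots l_1$ and $wl_1\ldots l_{m-1}=l_ml_{m-1}\ldots l_1$ by Lemma \ref{Lemma3}; as $L_m$ is a prefix of $L_n$, the reversals show $l_ml_{m-1}\ldots l_1$ is a suffix of $l_nl_{n-1}\ldots l_1$, and appending $y$ preserves this. By the principle above, the lemma reduces to showing that $\sig(W)$ has length at most $m+1$, i.e. that $W$ has no significant suffix of length $p$ with $m+2\le p\le n+1$.

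Then I would characterize the significant suffixes of $W$. By Proposition \ref{Prop1} a block of length $p\ge 2$ is significant if and only if it has the form $x'l_1\ldots l_{p-1}$. The length-$(n+1)$ suffix is $W$ itself, which would force $y=l_n$, excluded by hypothesis. For $2\le p\le n$ the length-$p$ suffix is $l_{n-p+1}l_{n-p+2}\ldots l_{n-1}y$, and this is significant exactly when $l_{n-p+2}\ldots l_{n-1}=l_1\ldots l_{p-2}$ together with $y=l_{p-1}$. Here I would invoke that right-specialness of $xl_1\ldots l_{n-1}$ forces $l_1\ldots l_{n-1}$ to be a palindrome, i.e. $l_j=l_{n-j}$; hence its length-$(p-2)$ suffix equals the reverse of its length-$(p-2)$ prefix, and the matching condition $l_{n-p+2}\ldots l_{n-1}=l_1\ldots l_{p-2}$ becomes the statement that $l_1\ldots l_{p-2}$ is itself a palindrome. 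By Lemma \ref{Lemma3} and Proposition \ref{Prop1} this last condition is precisely the existence of a right special significant block of length $p-1$ (namely $l_{p-1}l_{p-2}\ldots l_1 = x'l_1\ldots l_{p-2}$).

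Finally, any such $p$ has $p-1\le n-1<n$, so $p-1$ is a right special significant length strictly below $n$; since $m$ is by hypothesis the largest such length, $p-1\le m$, that is $p\le m+1$. Thus $\sig(W)$ has length at most $m+1$, and the reduction of the second paragraph yields $\sig(xl_1\ldots l_{n-1}y)=\sig(wl_1\ldots l_{m-1}y)$. I expect the main obstacle to be the reversal bookkeeping in the third step: correctly converting the suffix-matching condition into a palindrome condition through the symmetry $l_j=l_{n-j}$, and keeping the index shifts among $p$, $p-1$, and $p-2$ consistent throughout.
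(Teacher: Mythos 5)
Your proof is correct and takes essentially the same route as the paper's: both arguments reduce the claim to showing that $xl_1\ldots l_{n-1}y$ has no significant suffix of length strictly between $m+1$ and $n+1$, and both derive from such a hypothetical suffix (via Proposition \ref{Prop1} and Lemma \ref{Lemma3}) a right special significant block of length strictly between $m$ and $n$, contradicting the consecutiveness of $xl_1\ldots l_{n-1}$ and $wl_1\ldots l_{m-1}$. The only presentational difference is that you make explicit the reduction step (the principle that a suffix $W'$ of $W$ with $|\sig(W)|\leq |W'|$ satisfies $\sig(W')=\sig(W)$) and route the index bookkeeping through the palindrome property of $l_1\ldots l_{n-1}$, whereas the paper works directly with the reversed indexing $l_n\ldots l_1y$ and leaves the reduction implicit.
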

\begin{proof}
Since $y \neq l_{n}$ it follows that $\sig(xl_1l_2...l_{n-1}y) \neq xl_1l_2...l_{n-1}y$. Suppose to the contrary that $$\text{sig}(xl_1l_2...l_{n-1}y)= \text{sig}(l_nl_{n-1}...l_2l_1y) = l_{m+i}...l_ml_{m-1}...l_2l_1y,$$
for some $i \geq 1$ with $m+i < n$.

Then $l_{m+i}...l_ml_{m-1}...l_2l_1y = zl_1...l_{m+i}$ for some $z \in \{0,1\}$, since $l_{m+i}...l_ml_{m-1}...l_2l_1y$ is significant. However, this implies that
$$l_{m+i}...l_ml_{m-1}...l_2l_1 = zl_1...l_{m+i-1}$$ is right special. This is a contradiction, since $l_{m+i}...l_ml_{m-1}...l_2l_1$ is a right special block of length $m+i$, with $m < m+i < n$, and $wl_1...l_{m-1} = l_ml_{m-1}...l_2l_1$ is the previous right special significant block.
Hence, $$\text{\emph{sig}}(xl_1l_2...l_{n-1}y) = \text{\emph{sig}}(wl_1...l_{m-1}y).$$
\end{proof}

We use the following lemma to determine the remaining arrows.

\begin{lemma}
Let  $xl_1l_2...l_{n-1}$ and $wl_1l_2...l_{m-1}$, $1\leq m < n$ be consecutive right special significant blocks as described and suppose $y \neq l_n$.
If $x \neq w$, then
$$x l_1l_2...l_{n-1}\to wl_1l_2...l_{m-1}l_{m}.$$
If $x=w$ then,
$$xl_1l_2...l_{n-1} \to \emph{\text{sig}}(wl_1l_2...l_{m-1}y).$$
\label{Lemma4}
\end{lemma}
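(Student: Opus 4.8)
The plan is to read the second arrow leaving $xl_1l_2\dots l_{n-1}$ straight off the definition of the HB diagram, rewrite its target by Lemma \ref{intsig}, and then evaluate it by means of one elementary structural identity. The identity I would record first is that the leftmost symbol of a right special significant block is forced: since $xl_1l_2\dots l_{n-1}$ is right special, Lemma \ref{Lemma3} gives $xl_1l_2\dots l_{n-1}=l_nl_{n-1}\dots l_2l_1$, and comparing leftmost symbols yields $x=l_n$; the same argument applied to the shorter block gives $w=l_m$. Because the alphabet is $\{0,1\}$, the hypothesis $y\neq l_n$ then means precisely $y=1-l_n=1-x$.

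Next I would pin down which arrow is under discussion. As $xl_1l_2\dots l_{n-1}$ is right special, both of its one-symbol right extensions lie in $\cl(\tilde X_u)$, so by Lemma \ref{Lemma3} exactly two arrows leave this vertex. Appending the symbol $l_n$ produces the arrow $xl_1l_2\dots l_{n-1}\to xl_1l_2\dots l_n$ already accounted for by Lemma \ref{Lemma2}. Hence the arrow we must analyze is the one obtained by appending the other symbol, namely $y$, and by the definition of the diagram its target is $\sig(xl_1l_2\dots l_{n-1}y)$. Lemma \ref{intsig} immediately rewrites this target as $\sig(wl_1l_2\dots l_{m-1}y)$.

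Finally I would split on the two cases using $x=l_n$ and $w=l_m$. If $x\neq w$ then $l_m\neq l_n$, so $l_m=1-l_n=y$; therefore $wl_1l_2\dots l_{m-1}y=wl_1l_2\dots l_{m-1}l_m$, which is a significant block of length $m+1$ by Proposition \ref{Prop1}. Since a significant block is its own significant form, the target equals $wl_1l_2\dots l_{m-1}l_m$, which is the first assertion. If instead $x=w$, then $y\neq l_m$ and there is nothing further to simplify: the target is exactly $\sig(wl_1l_2\dots l_{m-1}y)$, which is the second assertion.

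I expect the only genuine content, and the step most worth stating carefully, to be the identification $x=l_n$ (and $w=l_m$): once it is in place, the binary alphabet turns the dichotomy $x\neq w$ versus $x=w$ into the dichotomy $y=l_m$ versus $y\neq l_m$, and Lemma \ref{intsig} supplies everything else. The one point requiring care beforehand is confirming that the ``second'' arrow appends $y=1-l_n$ rather than $l_n$; this is exactly where Lemma \ref{Lemma2} is invoked, to peel off the already-known arrow and leave the one we want.
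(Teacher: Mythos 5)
Your proof is correct and follows essentially the same route as the paper's: both arguments use the identification $x=l_n$, $w=l_m$ coming from the right special form $l_nl_{n-1}\dots l_1$, exploit the binary alphabet to convert $y\neq l_n$ into $y=l_m$ exactly when $x\neq w$, rewrite the target via Lemma \ref{intsig}, and finish with Proposition \ref{Prop1} to see that $wl_1\dots l_{m-1}l_m$ is its own significant form. Your explicit preliminary step of peeling off the Lemma \ref{Lemma2} arrow to isolate the second arrow is a slightly more careful framing of what the paper leaves implicit, but the substance is identical.
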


\begin{proof}
Suppose $x \neq w$.

We know that $xl_1l_2...l_{n-1}= l_nl_{n-1}...l_2l_1$ and $wl_1...l_{m-1} = l_ml_{m-1}...l_2l_1$, so $x \neq w$ implies that $l_n \neq l_m$.
Additionally, $y \neq l_n$ implies $y = l_m$. By Lemma \ref{intsig},
$$\text{sig}(xl_1l_2...l_{n-1}y) = \sig(wl_1...l_{m-1}y) = \text{sig}(l_ml_{m-1}...l_2l_1l_m).$$
By Proposition \ref{Prop1} $$ \text{sig}(wl_1...l_{m-1}l_m) = wl_1...l_{m-1}l_m.$$
This gives us the arrow $$xl_1l_2...l_{n-1} \to wl_1...l_{m-1}l_m.$$

Now suppose $x = w$.
If $x = w$ then $l_n = l_m$, and thus $y \neq l_m$. Therefore
$$xl_1l_2...l_{n-1} \to  \text{sig}(wl_1...l_{m-1}y).$$
That is, there is an arrow leaving $xl_1l_2...l_{n-1}$ that points to the same significant block as one of the arrows leaving $wL_{m-1}$.
\end{proof}

We summarize the construction of the HB diagram of an arbitrary one-sided Sturmian system in the following theorem.

\begin{theorem}
Let $X_u^+$ be a one-sided Sturmian system, with $l = l_1l_2l_3...$ the left special sequence of $u$, {and $L_n=l_1 \dots l_n$ for each $n \geq 1$.} The HB diagram of $X_u^+$ is the directed graph with vertices $0, 1$, $0L_n$, and $1L_n$, $n\geq 1$, and whose arrows are defined by
\begin{enumerate}
\item $0 \to 1$, $0 \to 00$, and $1 \to 10$ if $l_1 = 0$, and $1 \to 0$, $1 \to 11$, and $0 \to 01$ if $l_1 = 1$,
\item $0L_n \to 0L_{n+1}$, $1L_n \to 1L_{n+1}$,
\item If $xL_n$ and $wL_m$, $n\geq m$, are consecutive right special blocks
   \begin{enumerate}
	\item $xL_n \to wL_{m+1}$ if $x\neq w$
	\item $xL_n \to \emph{\sig}(wL_{m}y)$, $y \neq l_{m+1}$, if $x=w.$
   \end{enumerate}
\end{enumerate}
\label{sturmthm}
\end{theorem}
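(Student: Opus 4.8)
The plan is to assemble the theorem directly from the propositions and lemmas already established, since each clause of the statement corresponds to one of them; the work is mainly organizational, reconciling the indexing conventions and checking completeness. All of the analytic content already lives in Proposition \ref{Prop1} and Lemmas \ref{Lemma1}--\ref{Lemma4}, so no new estimates are needed.

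First I would pin down the vertex set. The remark preceding Proposition \ref{leftextend} gives that $0$ and $1$ are significant, hence are the two length-one vertices. For each $n \geq 2$, Proposition \ref{Prop1} says the significant blocks of length $n$ are exactly $0L_{n-1}$ and $1L_{n-1}$. Reindexing, the significant blocks of $\tilde{X}_u$ are precisely $0, 1$ together with $0L_n, 1L_n$ for $n \geq 1$, which is exactly the claimed vertex set. Because $\tilde{X}_u$ is a subshift of $\Sigma_2$, each vertex emits at most two arrows, one for the choice $b_0 = 0$ and one for $b_0 = 1$, so it suffices to account for these two possibilities at every vertex.

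Next I would handle the arrows. The arrows out of the length-one vertices are precisely the content of Lemma \ref{Lemma1}, giving item (1). For a vertex $xL_{n-1}$ with $n > 1$, appending $b_0 = l_n$ is always legal, and Lemma \ref{Lemma2} shows $xL_{n-1} \to xL_n$; after shifting $n-1 \mapsto n$ this is item (2). It then remains to decide, for each such vertex, whether the other choice of $b_0$ yields an arrow, and if so where it lands. Lemma \ref{Lemma3} shows that a second arrow (corresponding to $y \neq l_n$) exists if and only if $xL_{n-1}$ is right special, i.e.\ equals $l_n l_{n-1} \cdots l_1$. When it is right special, Lemma \ref{Lemma4}---whose proof rests on the significant-form identity of Lemma \ref{intsig}---identifies the target in terms of the previous right special significant block $wL_{m-1}$: the arrow goes to $wL_m$ if $x \neq w$, and to $\sig(wL_{m-1}y)$ with $y \neq l_m$ if $x = w$. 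Shifting indices $n-1 \mapsto n$ and $m-1 \mapsto m$ turns this into items (3)(a) and (3)(b) exactly as stated, noting that $x = w$ forces $l_{n+1} = l_{m+1}$, so the condition $y \neq l_{m+1}$ coincides with $y \neq l_{n+1}$.

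Finally I would verify completeness: every vertex and arrow of the HB diagram appears above and nothing extraneous is introduced. The vertex list is closed by Proposition \ref{Prop1}, and the arrow list is closed because at each vertex we have examined both admissible values of $b_0$ and shown that the first always produces item (1)/(2) while the second produces item (3) exactly when the source is right special and nothing otherwise. The only point requiring genuine care---and the step I regard as the main (if modest) obstacle---is the off-by-one bookkeeping between the lemmas, which are stated for blocks $xl_1 \cdots l_{n-1}$, and the theorem, which is stated for $xL_n$; I would reconcile this explicitly as indicated above rather than leave it implicit.
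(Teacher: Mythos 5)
Your proposal is correct and matches the paper exactly: the paper offers no separate proof of Theorem \ref{sturmthm}, presenting it explicitly as a summary assembled from Proposition \ref{Prop1} (vertices) and Lemmas \ref{Lemma1}--\ref{Lemma4} (arrows), which is precisely the assembly you carry out. Your explicit attention to the index shift between $xL_{n-1}$ in the lemmas and $xL_n$ in the theorem, and to the equivalence of $y \neq l_{m+1}$ with $y \neq l_{n+1}$ when $x = w$, is if anything more careful than the paper's own presentation.
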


We describe the construction of the HB diagrams of two Sturmian systems. Recall from Example \ref{fibex} that the Fibonacci Sturmian system is $(X_f^+, \sigma)$, where \\ $f = 0100101001001010010100...$. In Example \ref{fibex2} it is shown that the left special sequence of $X_f^+$ is $f$.
By Proposition \ref{Prop1}, the significant blocks of $\tilde{X}_f$ are $$0, 1, 00, 10, 001, 101, 0010, 1010, 00100, 10100,....$$
Furthermore, the first few right special significant blocks are $0, 10,$ $0010$, and $1010010$. Following Theorem \ref{sturmthm}, we construct a portion of the HB diagram of $X_f^+$, as depicted in Figure \ref{fig:Fib}.

\tikzstyle{int}=[draw, fill=blue!20, minimum size=2em]
\tikzstyle{init} = [pin edge={to-,thin,black}]

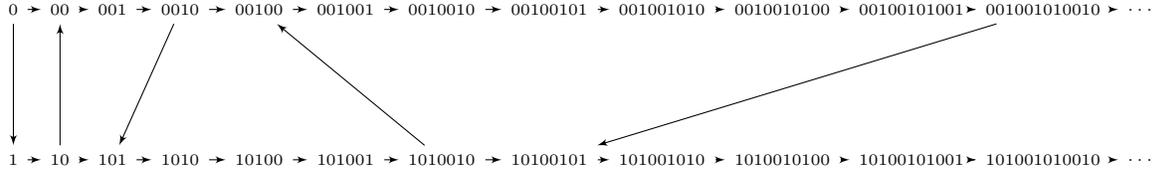
\begin{figure}[h]
\begin{center}

\begin{tikzpicture}[node distance=2.5cm,auto,>=latex']

    \node (a) {\tiny0};
    \node (b)  [right of = a, node distance=.62cm]{\tiny00};
    \node (c) [right of = b, node distance=.7cm]{\tiny001};
    \node (d)  [right of = c, node distance=.9cm]{\tiny0010};
    \node (e) [right of = d, node distance=1.05cm]{\tiny00100};
    \node (f) [right of = e, node distance=1.15cm]{\tiny001001};
    \node (g) [right of = f, node distance=1.28cm]{\tiny0010010};
    \node (h) [right of = g, node distance=1.43cm]{\tiny00100101};
    \node (i) [right of = h, node distance=1.5cm]{\tiny001001010};
    \node (j) [right of = i, node distance=1.6cm]{\tiny0010010100};
    \node (k) [right of = j, node distance=1.72cm]{\tiny00100101001};
    \node (l) [right of = k, node distance=1.75cm]{\tiny001001010010};

    \node (m) [right of = l, node distance=1.3cm]{\tiny$\dots$};

    \node  (A) [below of= a, node distance=2cm]{\tiny1};
    \node (B)  [below of = b, node distance=2cm]{\tiny10};
    \node (C) [below of = c, node distance=2cm]{\tiny101};
    \node (D)  [below of = d, node distance=2cm]{\tiny1010};
    \node (E) [below of = e, node distance=2cm]{\tiny10100};
    \node (F) [below of = f, node distance=2cm]{\tiny101001};
    \node (G) [below of = g, node distance=2cm]{\tiny1010010};
    \node (H) [below of = h, node distance=2cm]{\tiny10100101};
    \node (I) [below of = i, node distance=2cm]{\tiny101001010};
    \node (J) [below of = j, node distance=2cm]{\tiny1010010100};
    \node (K) [below of = k, node distance=2cm]{\tiny10100101001};
    \node (L) [below of = l, node distance=2cm]{\tiny101001010010};

    \node (M) [below of = m, node distance=2cm]{\tiny$\dots$};

    \path[->] (a) edge node {} (b);
       \path[->] (b) edge node {} (c);
          \path[->] (c) edge node {} (d);
             \path[->] (d) edge node {} (e);
                \path[->] (e) edge node {} (f);
                   \path[->] (f) edge node {} (g);
                      \path[->] (g) edge node {} (h);
                         \path[->] (h) edge node {} (i);
                            \path[->] (i) edge node {} (j);
                               \path[->] (j) edge node {} (k);
                                 \path[->] (k) edge node {} (l);
                                   \path[->] (l) edge node {} (m);

      \path[->] (A) edge node {} (B);
       \path[->] (B) edge node {} (C);
          \path[->] (C) edge node {} (D);
             \path[->] (D) edge node {} (E);
                \path[->] (E) edge node {} (F);
                   \path[->] (F) edge node {} (G);
                      \path[->] (G) edge node {} (H);
                         \path[->] (H) edge node {} (I);
                            \path[->] (I) edge node {} (J);
                               \path[->] (J) edge node {} (K);
                                 \path[->] (K) edge node {} (L);
                                   \path[->] (L) edge node {} (M);

     \path[->] (a) edge node {} (A);
     \path[->] (B) edge node {} (b);
     \path[->] (d) edge node {} (C);
     \path[->] (G) edge node {} (e);
     \path[->] (l) edge node {} (H);

\end{tikzpicture}

 \caption{The HB diagram of $X_f$.} \label{fig:Fib}
\end{center}
\end{figure}

Next consider the sequence $u$, where $u$ is the upper or lower mechanical sequence with slope $\alpha = \pi/4$ and intercept $\beta$, $\beta \leq 1$. Let $(X_u^+, \sigma)$ be the Sturmian system associated with the sequence $u$. Earlier we found that the left special sequence of $X_u^+$ is $$l = 11101111011101111011110....$$ Applying Proposition \ref{Prop1}, the significant blocks of $\tilde{X_u}$ are $$0,1,01,11,011,111,0111,1111,01110,11110,...,$$ and the first few right special significant blocks are $1, 11, 111$, $0111$ and $11110111$. Following Theorem \ref{sturmthm}, we begin construction of the HB diagram of $X_u^+$, as depicted in Figure \ref{fig:Pi/4}.

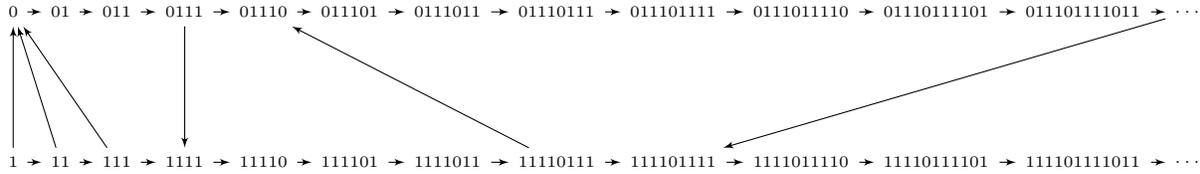
\begin{figure}[h!]
\begin{center}
%\scalebox{.6}{
\begin{tikzpicture}[node distance=2.5cm,auto,>=latex']

    \node (a) {\tiny 0};
    \node (b)  [right of = a, node distance=.63cm]{\tiny 01};
    \node (c) [right of = b, node distance=.75cm]{ \tiny 011};
    \node (d)  [right of = c, node distance=.9cm]{\tiny 0111};
    \node (e) [right of = d, node distance=1.05cm]{\tiny 01110};
    \node (f) [right of = e, node distance=1.15cm]{\tiny011101};
   \node (g) [right of = f, node distance=1.3cm]{\tiny 0111011};
  \node (h) [right of = g, node distance=1.45cm]{\tiny 01110111};
   \node (i) [right of = h, node distance=1.55cm]{\tiny 011101111};
   \node (j) [right of = i, node distance=1.7cm]{\tiny 0111011110};
    \node (k) [right of = j, node distance=1.8cm]{\tiny 01110111101};
    \node (l) [right of = k, node distance=1.95cm]{\tiny 011101111011};
    \node (m) [right of = l, node distance=1.4cm]{\tiny$\dots$};
       \node  (A) [below of= a, node distance=2cm]{ \tiny 1};
       \node (B)  [below of = b, node distance=2cm]{\tiny 11};
      \node (C) [below of = c, node distance=2cm]{\tiny 111};
   \node (D)  [below of = d, node distance=2cm]{\tiny 1111};
    \node (E) [below of = e, node distance=2cm]{\tiny11110};
   \node (F) [below of = f, node distance=2cm]{\tiny111101};
    \node (G) [below of = g, node distance=2cm]{\tiny 1111011};
   \node (H) [below of = h, node distance=2cm]{\tiny 11110111};
  \node (I) [below of = i, node distance=2cm]{\tiny 111101111};
    \node (J) [below of = j, node distance=2cm]{\tiny 1111011110};
    \node (K) [below of = k, node distance=2cm]{\tiny 11110111101};
    \node (L) [below of = l, node distance=2cm]{\tiny 111101111011};
   \node (M) [below of = m, node distance=2cm]{\tiny$\dots$};
    \path[->] (a) edge node {} (b);
       \path[->] (b) edge node {} (c);
          \path[->] (c) edge node {} (d);
             \path[->] (d) edge node {} (e);
                \path[->] (e) edge node {} (f);
                   \path[->] (f) edge node {} (g);
                      \path[->] (g) edge node {} (h);
                         \path[->] (h) edge node {} (i);
                            \path[->] (i) edge node {} (j);
                               \path[->] (j) edge node {} (k);
                                 \path[->] (k) edge node {} (l);
                                   \path[->] (l) edge node {} (m);
      \path[->] (A) edge node {} (B);
       \path[->] (B) edge node {} (C);
          \path[->] (C) edge node {} (D);
             \path[->] (D) edge node {} (E);
                \path[->] (E) edge node {} (F);
                   \path[->] (F) edge node {} (G);
                      \path[->] (G) edge node {} (H);
                         \path[->] (H) edge node {} (I);
                            \path[->] (I) edge node {} (J);
                               \path[->] (J) edge node {} (K);
                                 \path[->] (K) edge node {} (L);
                                   \path[->] (L) edge node {} (M);
     \path[->] (A) edge node {} (a);
     \path[->] (B) edge node {} (a);
     \path[->] (C) edge node {} (a);
     \path[->] (d) edge node {} (D);
     \path[->] (H) edge node {} (e);
     \path[->] (m) edge node {} (I);
\end{tikzpicture}
%}
%
 \caption{The HB diagram of $X_u^+$ where $u$ is the upper or lower mechanical sequence with slope $\alpha = \pi/4$.} \label{fig:Pi/4}
\end{center}
\end{figure}

\section{General properties of HB diagrams} \label{Gen}
We consider next what HB diagrams can tell us about their associated systems. We first consider the properties of the HB diagram that hold for any subshift.

Let $X^+$ be a one-sided subshift with natural extension $\natext$ as previously defined.
\begin{lemma}
Suppose $c_kc_{k-1}...c_1c_0$ is a block in $\cl(\natext)$. Then
\begin{center}
$ \emph{\text{sig}}(\emph{\text{sig}}(c_kc_{k-1}...c_1)c_0) = \emph{\text{sig}}(c_kc_{k-1}...c_1c_0).$
\end{center}
\label{siglem}
\end{lemma}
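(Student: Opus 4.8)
The plan is to reduce everything to one structural description of $\sig$: for a block $B = a_{-n}\dots a_0$, the block $\sig(B)$ is the \emph{shortest} suffix of $B$ (sharing the right-hand endpoint $a_0$) whose follower set equals $\fol(B)$. I would establish this from the monotonicity of follower sets under left extension. Writing $S_i = a_{-i}\dots a_0$ for the suffixes of $B$, enlarging the left context can only shrink the set of admissible futures, so $\fol(S_0)\supseteq \fol(S_1)\supseteq \dots \supseteq \fol(S_n)$. By definition $S_i$ is significant exactly when this chain drops strictly at step $i$; hence if $S_k = \sig(B)$ is the longest significant suffix, the chain is constant beyond level $k$, giving $\fol(S_k)=\fol(B)$, while strict monotonicity forces $\fol(S_j)\supsetneq \fol(B)$ for every $j<k$. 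Thus the suffixes of $B$ with follower set equal to $\fol(B)$ are precisely those of length at least $|\sig(B)|$, they are nested, and in particular $\sig(B)$ is a suffix of every suffix of $B$ that realizes $\fol(B)$.

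The crux of the argument, and the step I expect to be the main obstacle, is a congruence property: if $U$ is a suffix of $W$ with $\fol(U)=\fol(W)$, then for any symbol $b$ with $Wb\in\cl(\natext)$ one also has $\fol(Ub)=\fol(Wb)$. The inclusion $\fol(Wb)\subseteq \fol(Ub)$ is immediate from monotonicity, since $Ub$ is a suffix of $Wb$. For the reverse inclusion I would take a future $r=r_0r_1\dots \in \fol(Ub)$ (so $r_0=b$), choose $x\in \natext$ realizing $Ub$ with forward ray $r$, and read off the future of $x$ one position further to the left: writing $a_0$ for the last symbol of $U$, this future equals $a_0 b r_1 r_2\dots$ and lies in $\fol(U)$. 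Using $\fol(U)=\fol(W)$, the same future lies in $\fol(W)$, so some $y\in\natext$ realizes $W$ with forward ray $a_0 b r_1 r_2\dots$; the shift $\sigma y$ then realizes $Wb$ with forward ray $r$, proving $r\in \fol(Wb)$. The essential ingredient is shift-invariance of $\natext$, which lets one slide the realizing bi-infinite sequence back and forth between the two window positions.

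With these two facts the lemma follows by bookkeeping. Set $W=c_k\dots c_1$, $B=c_k\dots c_1 c_0 = Wc_0$, and $U=\sig(c_k\dots c_1)$, so $U$ is a suffix of $W$ with $\fol(U)=\fol(W)$. The congruence property gives $\fol(Uc_0)=\fol(Wc_0)=\fol(B)$, and $Uc_0=\sig(c_k\dots c_1)c_0$ is a suffix of $B$. Since $Uc_0$ realizes $\fol(B)$, the first fact makes $\sig(B)$ a suffix of $Uc_0$; applying the first fact again to the block $Uc_0$ (whose follower set is $\fol(B)$) identifies its shortest suffix with follower set $\fol(B)$ as exactly $\sig(B)$. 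Therefore $\sig(\sig(c_k\dots c_1)c_0)=\sig(Uc_0)=\sig(B)=\sig(c_k\dots c_1 c_0)$, as claimed. The degenerate cases, where $c_k\dots c_1$ is a single letter or $\sig$ is applied to a one-letter block, are immediate, since single letters occurring in the language are significant.
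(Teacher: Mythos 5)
Your proof is correct. The two arguments ultimately rest on the same single computation --- sliding a bi-infinite realizer of a block one position to transfer membership in a follower set between a block and the block with its last letter deleted --- but you package it quite differently. The paper works by contradiction on lengths: it first notes $|\sig(\sig(c_k\dots c_1)c_0)|\le|\sig(c_k\dots c_1c_0)|$, and then rules out strict inequality by showing that if $c_r\dots c_1c_0$ is significant then so is $c_r\dots c_1$ (the content of Lemma \ref{consecsig}), which would violate the maximality in the definition of $\sig(c_k\dots c_1)$. You instead characterize $\sig(B)$ as the shortest suffix of $B$ whose follower set equals $\fol(B)$ (correctly justified from the nested chain $\fol(S_0)\supseteq\fol(S_1)\supseteq\cdots$), and prove the ``congruence'' that $\fol(U)=\fol(W)$ forces $\fol(Ub)=\fol(Wb)$; observe that the one-step case of this congruence is precisely the contrapositive of Lemma \ref{consecsig}, so your key lemma and the paper's are logically equivalent, and your shift-the-realizer verification of it is the same manipulation the paper performs inside its contradiction. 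What your route buys is a direct equality argument with no contradiction, and an explicit, reusable description of $\sig$ in terms of follower sets that the paper never states but implicitly relies on elsewhere (e.g.\ in relating arrows of the HB diagram to follower sets); what the paper's route buys is brevity, since it only ever needs the one-letter deletion step rather than the general suffix-to-suffix congruence. Your handling of the degenerate one-letter cases is consistent with the paper's convention that single letters occurring in $\cl(\natext)$ are significant.
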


\begin{proof}

Let $|\text{sig}(c_kc_{k-1}...c_1)|$ denote the length of $\text{sig}(c_kc_{k-1}...c_1)$. Then
\begin{center}
 $|\text{sig}(\text{sig}(c_kc_{k-1}...c_1)c_0)| \leq |\text{sig}(c_kc_{k-1}...c_1c_0)|,$
 \end{center}
 since $|\text{sig}(c_kc_{k-1}...c_1)| \leq |c_kc_{k-1}...c_1|$.

Suppose on the contrary that
\begin{center}
 $|\text{sig}(\text{sig}(c_kc_{k-1}...c_1)c_0)| < |\text{sig}(c_kc_{k-1}...c_1c_0)|$.
 \end{center}
Furthermore, suppose $\text{sig}(c_kc_{k-1}...c_1) = c_j...c_1$, where $1 \leq j \leq k$.

 Then
 \begin{center}
 $\text{sig}(\text{sig}(c_kc_{k-1}...c_1)c_0) = \text{sig}(c_j...c_1c_0) = c_m...c_1c_0$
 \end{center}
 with $0 \leq m \leq j$, and
 \begin{center}
 $\text{sig}(c_kc_{k-1}...c_1c_0) = c_r...c_1c_0$
 \end{center}
 with $m < r \leq k$.

Since $ c_r...c_1c_0$ is significant, we know that $\text{fol}(c_r...c_1c_0) \subsetneq \text{fol}(c_{r-1}...c_1c_0)$. Thus there exists a one-sided ray $b_0b_1b_2...$ in $\text{fol}(c_{r-1}...c_1c_0)$ that is not in $\text{fol}(c_r...c_1c_0)$. Furthermore, $b_0b_1b_2...$ is such that there exists a two-sided ray $b$ in the natural extension of $X^+$, with $b_{-r+1}...b_{0} = c_{r-1}...c_0$. However, this implies that $b_{-1}b_0b_1...$ is a one-sided ray  in $\text{fol}(c_{r-1}...c_1)$ that is not in $\text{fol}(c_r...c_1)$. Hence, $c_r...c_1$ is significant.

It follows that $r \leq j$. If not $\text{sig}(c_kc_{k-1}...c_1) = c_r...c_1$. However, we have assumed $r > m$. This contradicts  $\text{sig}(c_j...c_1c_0) = c_m...c_1c_0$, since $c_r...c_1c_0$ is a longer significant block than $c_m...c_1c_0$ that is also a suffix of $c_j...c_1c_0$. Thus,
 \begin{center}
  $|\text{sig}(\text{sig}(c_kc_{k-1}...c_1)c_0)| =  |\text{sig}(c_kc_{k-1}...c_1c_0)|,$
 \end{center}
and so
  \begin{center}
  $\text{sig}(\text{sig}(c_kc_{k-1}...c_1)c_0) =  \text{sig}(c_kc_{k-1}...c_1c_0)$.
 \end{center}
\end{proof}

{
The following corollary, which is stated as an exercise in \cite{Buzzi}, follows from Lemma \ref{siglem} by induction.}
\begin{cor}
Let $\alpha_0 \to \alpha_1 \to \cdots \to \alpha_n$ be a finite path on a HB diagram $\cd$. Suppose $\alpha_0 = b_{-k}...b_0$ and let $a_i$ be the last letter of $\alpha_i$. Then for all $n \in \Bn$
\begin{center}
$\alpha_n = \emph{\text{sig}}(b_{-k}...b_0a_1a_2...a_n).$
\end{center}

\label{iteratedsig}
\end{cor}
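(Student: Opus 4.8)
The plan is to prove Corollary \ref{iteratedsig} by induction on $n$, using Lemma \ref{siglem} as the essential inductive step. Along the way I must be careful to track what ``$\alpha_n$'' means: since the $\alpha_i$ are vertices of $\cd$, they are by definition significant blocks of $\natext$, and an arrow $\alpha_{i} \to \alpha_{i+1}$ means precisely that $\hat\pi(\alpha_{i}) a_{i+1} \in \cl(\natext)$ (writing $\hat\pi(\alpha_i)$ for the block $\alpha_i$ itself) and that $\alpha_{i+1} = \text{sig}(\alpha_i a_{i+1})$, where $a_{i+1}$ is the last letter of $\alpha_{i+1}$.

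For the base case $n=0$ the claim reads $\alpha_0 = \text{sig}(b_{-k}...b_0)$, which holds because $\alpha_0 = b_{-k}...b_0$ is itself a vertex of $\cd$, hence significant, so $\text{sig}$ fixes it. For the inductive step, I would assume $\alpha_{n} = \text{sig}(b_{-k}...b_0 a_1 \cdots a_n)$ and examine the arrow $\alpha_n \to \alpha_{n+1}$. By the definition of the arrows, $\alpha_{n+1} = \text{sig}(\alpha_n a_{n+1})$, where $a_{n+1}$ is the last letter of $\alpha_{n+1}$. Substituting the inductive hypothesis gives
\begin{center}
$\alpha_{n+1} = \text{sig}\bigl(\text{sig}(b_{-k}...b_0 a_1 \cdots a_n)\, a_{n+1}\bigr).$
\end{center}
Now I apply Lemma \ref{siglem} with the block $c_k \cdots c_1 c_0$ taken to be $b_{-k}...b_0 a_1 \cdots a_{n+1}$ (so that $c_0 = a_{n+1}$ and $c_k \cdots c_1 = b_{-k}...b_0 a_1 \cdots a_n$); the lemma then yields exactly
\begin{center}
$\text{sig}\bigl(\text{sig}(b_{-k}...b_0 a_1 \cdots a_n)\, a_{n+1}\bigr) = \text{sig}(b_{-k}...b_0 a_1 \cdots a_n a_{n+1}),$
\end{center}
which is the desired formula for $\alpha_{n+1}$, completing the induction.

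The one point requiring care, and the place I would slow down, is verifying the hypothesis of Lemma \ref{siglem} at each step, namely that the relevant concatenation is actually a block in $\cl(\natext)$. Concretely I need $b_{-k}...b_0 a_1 \cdots a_{n+1} \in \cl(\natext)$ before I can invoke the lemma. This should follow from Corollary's own hypothesis that $\alpha_0 \to \cdots \to \alpha_{n+1}$ is a genuine path on $\cd$: each arrow condition supplies that $\hat\pi(\alpha_i) a_{i+1} = \alpha_i a_{i+1} \in \cl(\natext)$, and since $\text{sig}$ produces a suffix of its argument, the full concatenation $b_{-k}...b_0 a_1 \cdots a_{n+1}$ has all the correct letters appearing consecutively in a point of $\natext$; a short argument (or an appeal to the inductive hypothesis together with the arrow definition) confirms it lies in the language. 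I do not expect any genuine obstacle here, since Lemma \ref{siglem} has already done the combinatorial heavy lifting about how $\text{sig}$ interacts with right-concatenation; the corollary is essentially the statement that $\text{sig}$ iterated along a path equals $\text{sig}$ applied once to the whole projected block, and the induction packages Lemma \ref{siglem} into exactly that form.
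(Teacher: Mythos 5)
Your proposal is correct and is exactly the argument the paper intends: the paper writes no proof at all, saying only that the corollary ``follows from Lemma \ref{siglem} by induction,'' and your base case ($\alpha_0$ is a vertex, hence significant, hence fixed by $\sig$) plus the inductive step via Lemma \ref{siglem} is that induction. The one detail you rightly flag --- that $b_{-k}\dots b_0a_1\cdots a_{n+1}$ must lie in $\cl(\natext)$ before Lemma \ref{siglem} applies --- does go through, but the clean way to see it is not ``the letters appear consecutively'' but rather that $\sig(w)$ being the \emph{maximal} significant suffix of $w$ forces $\fol(w)=\fol(\sig(w))$ (each longer suffix is non-significant, so its follower set equals that of the suffix one shorter, and these equalities chain), whence the arrow condition $\alpha_na_{n+1}\in\cl(\natext)$ lifts to $wa_{n+1}\in\cl(\natext)$.
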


Shifting our attention to the paths on $\cd$, we define the \emph{length} of a path to be the number of vertices in the path.
From Corollary \ref{iteratedsig} we get the following result.

\begin{theorem}
Let $X$ be a subshift with HB diagram $\cd$. If $\alpha_0 \to \alpha_1 \to \cdots \to \alpha_{n-1}$ and $\beta_0 \to \beta_1 \to \cdots \beta_{n-1}$ are two paths of length $n$ on $\cd$ with $\alpha_0$ and $\beta_0$ blocks of length 1, then $\hat{\pi}(\alpha_0\alpha_1...\alpha_{n-1}) = \hat{\pi}(\beta_0\beta_1...\beta_{n-1})$ if and only if $\alpha_i = \beta_ i $ for all $0 \leq i \leq n-1$.
\label{distinctpath}
\end{theorem}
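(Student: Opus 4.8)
The plan is to prove both directions of the biconditional, with the ``if'' direction being essentially immediate and the ``only if'' direction carrying all the content. For the ``if'' direction, suppose $\alpha_i = \beta_i$ for all $0 \leq i \leq n-1$; then since $\hat{\pi}$ is defined symbolwise by reading off the last letter of each vertex, we have $\hat{\pi}(\alpha_0\alpha_1\dots\alpha_{n-1}) = a_0a_1\dots a_{n-1} = \hat{\pi}(\beta_0\beta_1\dots\beta_{n-1})$ trivially. The real work is the forward direction: assuming the two projected blocks agree, I must recover equality of the vertex sequences.

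First I would set up notation: write $\alpha_0 = x$ and $\beta_0 = z$ for the two length-one starting vertices, and let $a_i$ (respectively $b_i$) denote the last letter of $\alpha_i$ (respectively $\beta_i$). The hypothesis $\hat{\pi}(\alpha_0\dots\alpha_{n-1}) = \hat{\pi}(\beta_0\dots\beta_{n-1})$ says precisely that $a_i = b_i$ for every $0 \leq i \leq n-1$; in particular $a_0 = b_0$, so the starting letters agree, i.e. $x = z$ as length-one blocks. The key tool is Corollary \ref{iteratedsig}: applied to the first path it gives $\alpha_i = \text{sig}(x\, a_1 a_2 \dots a_i)$ for each $i$, and applied to the second it gives $\beta_i = \text{sig}(z\, b_1 b_2 \dots b_i)$.

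The main step is then to combine these. Since $x = z$ (because $a_0 = b_0$ and the starting vertices have length one, so each equals its own single last letter) and since $a_j = b_j$ for all $j \leq i$, the argument strings $x\,a_1\dots a_i$ and $z\,b_1\dots b_i$ are literally the same block. Applying the $\text{sig}$ function to identical blocks yields identical outputs, so $\alpha_i = \text{sig}(x\,a_1\dots a_i) = \text{sig}(z\,b_1\dots b_i) = \beta_i$ for every $0 \leq i \leq n-1$, which is exactly the desired conclusion.

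The step I expect to require the most care is the clean justification that the starting vertices coincide: the statement only assumes $\alpha_0$ and $\beta_0$ are \emph{blocks of length one}, and I must observe that a length-one vertex is equal to its own last letter, so the equality $a_0 = b_0$ forces $\alpha_0 = \beta_0$. This is where the ``length 1'' hypothesis is genuinely used, and it is essential, since without it two distinct vertices could have the same last letter and the same subsequent label sequence while differing as blocks. Once this is pinned down, the rest follows formally from Corollary \ref{iteratedsig} with no additional combinatorial input, so the proof is short; the only substantive idea is recognizing that $\text{sig}$ depends on the path only through its projected label block and its length-one starting vertex.
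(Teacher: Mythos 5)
Your proposal is correct and follows essentially the same route as the paper's proof: both directions are handled identically, with the forward direction resting on Corollary \ref{iteratedsig} to write $\alpha_i = \text{sig}(a_0a_1\dots a_i)$ and $\beta_i = \text{sig}(b_0b_1\dots b_i)$ and on the length-one hypothesis to identify the starting vertices with their last letters. Your explicit remark about where the length-one assumption is genuinely needed is a point the paper leaves implicit, but the argument is the same.
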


\begin{proof}
Let $a_i$ and $b_i$ be the last letters of $\alpha_i$ and $\beta_i$ respectively.

Suppose $\hat{\pi}(\alpha_0\alpha_1...\alpha_{n-1}) = \hat{\pi}(\beta_0\beta_1...\beta_{n-1})$.  By Corollary \ref{iteratedsig},
$$\alpha_i = \text{sig}(\alpha_0a_1...a_{i}) = \text{sig}(a_0a_1...a_{i})$$
and
$$\beta_i = \text{sig}(\beta_0b_1...b_{i}) = \text{sig}(b_0b_1...b_{i}).$$
Furthermore, $a_0a_1...a_{i}= \hat{\pi}(\alpha_0\alpha_1...\alpha_i)$ and $b_0b_1...b_i = \hat{\pi}(\beta_0\beta_1...\beta_i)$. As $\hat{\pi}(\alpha_0\alpha_1...\alpha_i)$ and $\hat{\pi}(\beta_0\beta_1...\beta_i)$ are prefixes of $\hat{\pi}(\alpha_0\alpha_1...\alpha_{n-1})$ and $\hat{\pi}(\beta_0\beta_1...\beta_{n-1})$ respectively, it follows that
$$\hat{\pi}(\alpha_0\alpha_1...\alpha_i) = \hat{\pi}(\beta_0\beta_1...\beta_i).$$
Hence,
$$\alpha_i = \beta_i = \sig({\hat{\pi}(\alpha_0\alpha_1...\alpha_i)}) = \sig( \hat{\pi}(\beta_0\beta_1...\beta_i)).$$

Now suppose $\alpha_i = \beta_ i $ for all $0 \leq i \leq n-1$. Then $a_i = b_i$ for all $0 \leq i \leq n-1$. Thus $a_0...a_{n-1} = b_0...b_{n-1}.$ By definition, $\hat{\pi}(\alpha_0\alpha_1...\alpha_{n-1}) = a_0...a_{n-1}$ and $\hat{\pi}(\beta_0\beta_1...\beta_{n-1}) = b_0...b_{n-1}$. Hence
$\hat{\pi}(\alpha_0\alpha_1...\alpha_{n-1}) = \hat{\pi}(\beta_0\beta_1...\beta_{n-1})$.
\end{proof}

\noindent It follows from Theorem \ref{distinctpath} that on the HB diagram $\cd$ of a subshift $X$ each distinct path of length $n$ starting at a block of length one projects to a distinct block of length $n$ in $\cl(X)$.

Now suppose $B$ is a block in $\cl(X)$. We ask, does there exist a path on $\cd$ starting with a block of length one that projects to the block $B$? In general, this is not the case.
\newline

\begin{example}
Let $u = 1\bar{0}$ and consider the system $(X^+, \sigma)$ where $X^+$ is the orbit closure of $\{\sigma^n u | n \in \Bn \}.$ We claim that $1 \notin \cl(\natext)$. For 1 to be in $\cl(\natext)$, 1 must appear in a two-sided sequence $b$ such that $b_pb_{p+1}... \in X^+$ for all $p \in \Bz$. Suppose $$b = ... b_{-n-2}1b_{-n}...b_{-2}b_{-1}b_0b_1b_1....$$ Then $b_{-n-2}$ cannot equal 0 since $01 \notin \cl(X^+)$, and  $b_{-n-2}$ cannot equal 1 since $11 \notin \cl(X^+).$ Thus, 1 does not appear in any two-sided sequence $b$ with the property that $b_pb_{p+1}... \in X^+$ for all $p \in \Bz$. Hence, $1 \notin \cl(\natext).$ This implies that 1 is not a vertex in the HB diagram $\cd$ for $X^+$. As a result, there is no path on $\cd$ starting with a block of length one that projects to any block in $\cl(X^+)$ that begins with a 1.
\end{example}

In this example, we see that the relationship between $\cl(X)$ and $\cl(\natext)$ is closely related to the paths that appear on $\cd$. If $X$ is a two-sided subshift, obviously $\natext = X$ and $\cl(X) = \cl(\natext).$ In contrast, if $X^+$ is a one-sided subshift it is not as easy to determine whether $\cl(X^+) = \cl(\natext).$ Recall that,
$$\natext = \{ x \in \mathcal{A}^\Bz | \text{ for all } p \in \Bz, \hspace{2mm} x_px_{p+1}... \in X^+\}.$$
Hence, $\cl(\natext) \subset \cl(X^+).$  We provide a construction of $\natext$ and thus a necessary and sufficient condition on $X^+$ such that $\cl(X^+)= \cl(\natext).$

Let $a^{(n)}=a_0^{(n)}a_1^{(n)}a_2^{(n)}a_3^{(n)}...$ be points in $X^+$. We construct a sequence $(x_n(a^{(n)}))$ of two-sided sequences as follows. Let $b^{(n)} = 0^{\infty}.a^{(n)}$ and set $x_n(a^{(n)}) = \sigma^nb^{(n)}.$

\begin{prop}
Let $X^+$ and  $(x_n(a^{(n)}))$ be as described above. Then $\natext$ is the set of limit points of all $(x_n(a^{(n)}))$, $a^{(n)} \in X^+$ for all $n \geq 0$.
\label{extcons}
\end{prop}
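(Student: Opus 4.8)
The plan is to prove the asserted set equality by establishing the two inclusions separately, after first recording an explicit coordinatewise description of the bisequences $x_n(a^{(n)})$. Unwinding the definitions, $b^{(n)} = 0^\infty.a^{(n)}$ has $b^{(n)}_i = a_i^{(n)}$ for $i \geq 0$ and $b^{(n)}_i = 0$ for $i < 0$, so $x_n(a^{(n)}) = \sigma^n b^{(n)}$ satisfies
$$(x_n(a^{(n)}))_j = \begin{cases} a_{j+n}^{(n)} & \text{if } j \geq -n, \\ 0 & \text{if } j < -n. \end{cases}$$
The observation I would use throughout is that for every $p \geq -n$ the right ray of $x_n(a^{(n)})$ beginning at coordinate $p$ is exactly $\sigma^{p+n}(a^{(n)})$, which lies in $X^+$ since $a^{(n)} \in X^+$ and $X^+$ is shift-invariant.

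First I would show that every limit point lies in $\natext$. Suppose $x_{n_k}(a^{(n_k)}) \to y$ along some subsequence. Fixing $p \in \Bz$, for all large $k$ one has $p \geq -n_k$, so the right ray of $x_{n_k}(a^{(n_k)})$ at coordinate $p$ belongs to $X^+$ by the observation above. Coordinatewise convergence of the bisequences forces these right rays to converge to $y_p y_{p+1}\dots$ in $\mathcal{A}^{\Bn}$, and since $X^+$ is closed the limit $y_p y_{p+1}\dots$ is again in $X^+$. As $p$ is arbitrary, this gives $y \in \natext$.

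Next I would show the reverse inclusion by exhibiting, for each $y \in \natext$, a choice of $a^{(n)} \in X^+$ whose associated sequence converges to $y$. The natural choice is $a^{(n)} = y_{-n}y_{-n+1}y_{-n+2}\dots$, the right ray of $y$ at coordinate $-n$; this is a point of $X^+$ precisely because $y \in \natext$. A direct index computation, using the explicit formula above, shows that $x_n(a^{(n)})$ agrees with $y$ on every coordinate $j \geq -n$ (and equals $0$ for $j < -n$). Consequently the nearest coordinate at which $x_n(a^{(n)})$ can disagree with $y$ has absolute value at least $n+1$, so $d(x_n(a^{(n)}), y) \leq 2^{-(n+1)} \to 0$; hence $y$ is a limit point of this sequence.

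The computations here are routine, so the only place demanding care is the bookkeeping with the two superimposed shifts — the padding that places $a^{(n)}$ in $b^{(n)}$ and the outer $\sigma^n$ — which is what makes the right rays genuine shifts of points of $X^+$. Once the explicit formula for $x_n(a^{(n)})$ is verified, both inclusions reduce to shift-invariance and closedness of $X^+$, with no further obstacle.
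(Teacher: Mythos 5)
Your proposal is correct and follows essentially the same route as the paper's proof: both inclusions are handled the same way, with the forward direction resting on closedness of $X^+$ applied to the right rays of a limit point, and the converse using the identical witness $a^{(n)} = y_{-n}y_{-n+1}\dots = \pi_{X^+}(\sigma^{-n}y)$. Your version merely makes the coordinate bookkeeping and the metric estimate more explicit than the paper does, which is a point in its favor but not a different argument.
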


\begin{proof}
By compactness, the sequence $(x_n(a^{(n)}) )$ has a limit point $x$. That is, there exists a subsequence $(x_{n_k}(a^{(n_k)}))$ of $(x_n(a^{(n)}) )$ that converges to $x$. We claim that any such limit point is in $\natext$. Suppose on the contrary that $x \notin \natext$. Then there exists $p \in \Bz$ such that $x_px_{p+1}x_{p+2}... \notin X^+$. This, however, is impossible since the initial blocks of any right ray in $x$ can be found as the initial blocks of a point in $X^+$ by construction. Hence, $x_px_{p+1}x_{p+2}...$ is in the closure of $X^+$ and thus is in $X^+$.

Conversely, let $b=...b_{-3}b_{-2}b_{-1}.b_0b_1b_2b_3...$ be an arbitrary bisequence in $\natext$. We show that $b$ is a limit point of a subsequence  of $(x_n(a^{(n)}))$, for some $a^{(n)} \in X^+$. Let \\
$a^{(n)} = \pi_{X^+}(\sigma^{-n}b)$, where $\pi_{X^+}$ is as defined earlier. That is
\begin{align*}
 a^{(1)} &= \pi_{X^+}(\sigma^{-1}b) = b_{-1}b_0b_1... \\
 a^{(2)} &= \pi_{X^+}(\sigma^{-2}b) = b_{-2}b_{-1}b_0... \\
 \vdots \\
 a^{(n)} &= \pi_{X^+}(\sigma^{-n}b) = b_{-n}b_{-n+1}b_{-n+2}... .
\end{align*}
It follows that,
\begin{center}
$\displaystyle b = \lim_{n \to \infty} x_n(a^{(n)})$.
\end{center}
Thus any point in $\natext$ is a limit point of a subsequence of $(x_n(a^{(n)}))$.
\end{proof}

\begin{cor}
$\cl(X^+)= \cl(\natext)$ if and only if for every block $B$ in $\cl(X^+)$ and for all $n \geq 0$ there exists $a^{(n)} \in X^+$ such that $B$ appears in $a^{(n)}$ starting at position $n.$
\label{langequiv}
\end{cor}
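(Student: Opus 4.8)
The plan is to reduce the statement entirely to Proposition \ref{extcons} together with the containment $\cl(\natext) \subseteq \cl(X^+)$ already noted in the text. Since that containment is automatic, the asserted equality $\cl(X^+) = \cl(\natext)$ is equivalent to the reverse containment, i.e.\ to the statement that every block $B \in \cl(X^+)$ occurs in some bisequence of $\natext$. So the whole corollary becomes: this reverse containment holds if and only if every block of $\cl(X^+)$ can be made to sit at every prescribed left-distance $n$ inside a point of $X^+$.

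The one computational ingredient I would record first is the index bookkeeping for $x_n(a^{(n)}) = \sigma^n b^{(n)}$, where $b^{(n)} = 0^\infty . a^{(n)}$: its coordinate at position $i$ equals $a^{(n)}_{i+n}$ when $i \geq -n$ and equals $0$ when $i < -n$. The key consequence is the single equivalence that $B$ occurs in $a^{(n)}$ starting at position $n$ if and only if $B$ occurs in $x_n(a^{(n)})$ starting at position $0$. This identity is exactly what translates the hypothesis of the corollary into a statement about a fixed coordinate window in the approximating bisequences.

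For the direction assuming the stated condition, I would fix $B \in \cl(X^+)$ and, for each $n \geq 0$, choose $a^{(n)} \in X^+$ with $B$ appearing at position $n$, so that by the equivalence above each $x_n(a^{(n)})$ carries $B$ in positions $0, \dots, |B|-1$. By compactness this sequence has a convergent subsequence whose limit lies in $\natext$, by the first half of the proof of Proposition \ref{extcons}; since the coordinates $0, \dots, |B|-1$ are constant along the sequence, the limit still carries $B$ at position $0$, whence $B \in \cl(\natext)$. As $B$ was arbitrary this gives $\cl(X^+) \subseteq \cl(\natext)$ and hence equality. For the converse, assuming $\cl(X^+) = \cl(\natext)$, I would take any $B \in \cl(\natext)$ and use the shift-invariance of $\natext$ to choose $y \in \natext$ with $y_0 \cdots y_{|B|-1} = B$; then $a^{(n)} := \pi_{X^+}(\sigma^{-n} y) = y_{-n} y_{-n+1} \cdots$ lies in $X^+$ precisely because $y \in \natext$, and $B$ appears in it starting at position $n$.

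The main thing to get right is the index bookkeeping: keeping straight the picture of ``$n$ symbols of valid left context, then zeros beyond,'' and ensuring the occurrence of $B$ is pinned at a fixed position so that it survives passage to the limit in the product topology. There is no serious analytic obstacle beyond this careful tracking of positions, since compactness and Proposition \ref{extcons} supply everything else.
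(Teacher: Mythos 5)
Your proposal is correct and follows essentially the same route as the paper: the forward direction pins $B$ at coordinates $0,\dots,|B|-1$ of the approximating bisequences $x_n(a^{(n)})$ and passes to a limit point via Proposition \ref{extcons}, and the converse shifts a point of $\natext$ containing $B$ and projects under $\pi_{X^+}$ to place $B$ at position $n$ (the paper shifts by some $m$ to put $B$ at position $n$ directly, while you shift $B$ to position $0$ and then apply $\sigma^{-n}$ — the same maneuver with different bookkeeping). No gaps.
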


\begin{proof}

Suppose that $B \in \cl(X^+)$ and for all $n \geq 0$ there exists $a^{(n)} = a_0^{(n)}a_1^{(n)}a_2^{(n)}... \in X^+$ such that $B$ appears in $a^{(n)}$ starting at position $n.$ Construct the sequence $(x_n(a^{(n)})$ as defined previously. For all $n \geq 0$,
\begin{center}
 $\displaystyle x_n(a^{(n)}) = 0^{\infty}a_0^{(n)}...a_{n-1}^{(n)}.Ba_{|B|+n}^{(n)}...$.
 \end{center}
Let $x$ be any limit point of the sequence $(x_n(a^{(n)}))$. Then $x \in \natext$ by Proposition \ref{extcons} and $x= ...x_{-1}x_{0}.Bx_{|B|}...$. That is, $B$ appears in $x$ starting at position 0.  Hence $B \in \cl(\natext)$.

Now assume $\cl(X^+)= \cl(\natext)$ and $B \in \cl(X^+)$ and $n \geq 0$ are given. Since $B \in \cl(\natext)$, B appears in some $x \in \natext$. Futhermore, there exists $m \in \Bz$ such that $\sigma^mx$ has $B$ appearing in position $n$. By definition, the ray $\pi_{X^+}(\sigma^mx)$ is in $X^+$. Setting $a^{(n)}= \pi_{X^+}(\sigma^{m}x)$, we have the desired result.
\end{proof}

The following Corollary is an immediate consequence of Corollary \ref{langequiv}.
\begin{cor}
$\cl(X^+)= \cl(\natext)$ if and only if every block in $\cl(X^+)$ is left extendable.  In particular, if $X^+$ is minimal, then $\cl(X^+)= \cl(\natext)$.
\label{corleftext}
\end{cor}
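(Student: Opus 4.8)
The plan is to prove Corollary \ref{corleftext} by reducing it directly to Corollary \ref{langequiv}, which gives a clean characterization of when $\cl(X^+) = \cl(\natext)$ in terms of the positions at which blocks can be made to appear. The key observation is that the condition in Corollary \ref{langequiv} — for every block $B \in \cl(X^+)$ and every $n \geq 0$ there is a point $a^{(n)} \in X^+$ in which $B$ appears starting at position $n$ — is governed entirely by the left-extendability of blocks. So the first step is to make precise the notion of a block being \emph{left extendable}: a block $B \in \cl(X^+)$ is left extendable if there is a symbol $c \in \ca$ such that $cB \in \cl(X^+)$, and iteratively left extendable if one can prepend arbitrarily long blocks. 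I would state at the outset that ``every block is left extendable'' is to be understood as every block being extendable arbitrarily far to the left.

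First I would prove the forward-and-backward equivalence between the Corollary \ref{langequiv} condition and left-extendability. Suppose every block in $\cl(X^+)$ is (iteratively) left extendable. Given $B \in \cl(X^+)$ and $n \geq 0$, I extend $B$ to the left by $n$ symbols, producing a block $w_0w_1\dots w_{n-1}B \in \cl(X^+)$; since this block lies in $\cl(X^+)$ it occurs in some point $a^{(n)} \in X^+$, and by shifting I may assume it occurs at position $0$, so that $B$ occurs at position $n$. This is exactly the hypothesis of Corollary \ref{langequiv}, so $\cl(X^+) = \cl(\natext)$. Conversely, if some block $B$ fails to be left extendable past some finite length $m$, then $B$ can never appear at any position $n > m$ in a point of $X^+$, because appearing at position $n$ would force a legal block of length $n$ preceding it; taking $n > m$ violates the Corollary \ref{langequiv} condition, so $\cl(X^+) \neq \cl(\natext)$. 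This gives the first sentence of the Corollary.

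For the ``in particular'' clause I would invoke minimality. If $X^+$ is minimal then it is the orbit closure of a syndetically recurrent point, so every block of $\cl(X^+)$ occurs with bounded gaps and hence occurs infinitely often; in particular any block $B$ occurs at arbitrarily large positions in a single point $x \in X^+$, and the symbols immediately preceding those occurrences furnish left extensions of $B$ of every finite length. Thus every block is iteratively left extendable, and the first part of the Corollary applies to give $\cl(X^+) = \cl(\natext)$.

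The step I expect to require the most care is the conversion between ``$B$ occurs at position $n$ in some point'' and ``$B$ admits a left extension of length $n$.'' One must be careful that occurring at position $n$ means there is a legal prefix of length exactly $n$, and conversely that a left extension of length $n$, being itself a block of $\cl(X^+)$, occurs in \emph{some} point and can be placed at the origin by a shift — both directions rely only on shift-invariance of $X^+$ and the definition of $\cl(X^+)$ as blocks occurring in points of $X^+$, so the argument is routine but the bookkeeping of positions and lengths is where an error would most easily slip in. The minimality argument is then essentially immediate from syndetic recurrence as recalled in the Background section.
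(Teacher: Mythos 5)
Your proof is correct and follows the same route the paper intends: the paper states Corollary \ref{corleftext} as an immediate consequence of Corollary \ref{langequiv}, and your argument is exactly that reduction (left extendability of all blocks $\Leftrightarrow$ the position-$n$ occurrence condition, plus syndetic recurrence for the minimal case) with the bookkeeping written out. No gaps.
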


We now focus our attention on subshifts $X$ such that $\cl(X)= \cl(\natext).$

\begin{theorem}
Let $X$ be a one or two-sided subshift with $\cl(X)= \cl(\natext)$. Let \\
$w = w_0w_1...w_n$ be a block in $\cl(X).$ Then there exists a unique path $\alpha_0 \to \alpha_1 \to \cdots \to \alpha_n$ in the HB diagram of $X$ with $\alpha_0 = w_0$ and $\hat{\pi}(\alpha_0\alpha_1...\alpha_n) = w$.
\label{pathprop}
\end{theorem}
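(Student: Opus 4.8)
The plan is to construct the path inductively, symbol by symbol, following the left-to-right reading of the block $w$. Since $\cl(X)=\cl(\natext)$, every block of $w$ lies in $\cl(\natext)$, so the significant-form operation $\sig$ is defined on all relevant prefixes. I would set $\alpha_i = \sig(w_0w_1\dots w_i)$ for each $0 \le i \le n$; this is the natural candidate, since Corollary \ref{iteratedsig} tells us that any path starting at a length-one vertex and projecting to $w$ must have exactly these vertices. The length-one hypothesis $\alpha_0 = w_0$ is consistent because $\sig(w_0)=w_0$ (a single symbol in $\cl(\natext)$ is significant, as noted in the Sturmian section).

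First I would verify that each $\alpha_i$ is genuinely a vertex of $\cd$, i.e.\ a significant block; this is immediate from the definition of $\sig$, which returns a significant block by construction. Next I would check that consecutive $\alpha_i$ are joined by arrows, that is, $\alpha_i \to \alpha_{i+1}$ in $\cd$. By definition of the arrows, I must show that $\alpha_i w_{i+1} \in \cl(\natext)$ and that $\alpha_{i+1} = \sig(\alpha_i w_{i+1})$. The membership $\alpha_i w_{i+1}\in\cl(\natext)$ follows because $\alpha_i$ is a suffix of $w_0\dots w_i$ and $w_0\dots w_{i+1}\in\cl(X)=\cl(\natext)$, so the follower-set inclusion guarantees $w_{i+1}$ can follow the suffix $\alpha_i$ as well. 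The equality $\alpha_{i+1}=\sig(\alpha_i w_{i+1})$ is exactly the content of Lemma \ref{siglem}: writing $\alpha_i=\sig(w_0\dots w_i)$, we have
$$\sig(\alpha_i w_{i+1}) = \sig(\sig(w_0\dots w_i)\,w_{i+1}) = \sig(w_0\dots w_i w_{i+1}) = \alpha_{i+1}.$$
This establishes existence of the path, and by construction $\hat{\pi}(\alpha_0\alpha_1\dots\alpha_n)=w_0w_1\dots w_n = w$, since the last letter of $\alpha_i=\sig(w_0\dots w_i)$ is $w_i$.

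For uniqueness, I would invoke Corollary \ref{iteratedsig} directly: any path $\beta_0\to\cdots\to\beta_n$ with $\beta_0=w_0$ a length-one block and $\hat{\pi}(\beta_0\dots\beta_n)=w$ must satisfy $\beta_i=\sig(w_0 b_1\dots b_i)$ where $b_j$ is the last letter of $\beta_j$. Since $\hat{\pi}$ records those last letters and equals $w$, we get $b_j=w_j$ for all $j$, whence $\beta_i=\sig(w_0\dots w_i)=\alpha_i$. Thus the path is forced to coincide with the one constructed. Alternatively, uniqueness is an immediate consequence of Theorem \ref{distinctpath}, which says distinct length-$n$ paths starting at length-one vertices project to distinct blocks.

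The main obstacle I anticipate is the verification that the arrow condition $\alpha_i w_{i+1}\in\cl(\natext)$ holds — one must be careful that passing to the significant suffix $\alpha_i$ of $w_0\dots w_i$ does not destroy the ability to append $w_{i+1}$. This is where the follower-set inclusion $\fol(w_0\dots w_i)\subseteq\fol(\alpha_i)$ (valid because $\alpha_i$ is a suffix) does the work: since $w_{i+1}$ extends the longer block $w_0\dots w_i$ within $\cl(\natext)$, it a fortiori extends the shorter significant suffix. Everything else is bookkeeping driven by Lemma \ref{siglem} and Corollary \ref{iteratedsig}, and the hypothesis $\cl(X)=\cl(\natext)$ is precisely what guarantees that $w$ and all the relevant two-sided extensions live in $\cl(\natext)$ so that these operations are meaningful.
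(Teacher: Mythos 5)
Your proposal is correct and follows essentially the same route as the paper: set $\alpha_i = \sig(w_0\dots w_i)$, use Lemma \ref{siglem} to verify the arrow condition $\alpha_{i+1}=\sig(\alpha_i w_{i+1})$, and get uniqueness from Theorem \ref{distinctpath}. If anything you are slightly more careful than the paper, which does not explicitly check the membership $\alpha_i w_{i+1}\in\cl(\natext)$ that you justify via the follower-set inclusion for suffixes.
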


\begin{proof}
Let $w = w_0w_1...w_n$ be a block in $\cl(X).$  Since $w_0...w_i$ appears in $w$ for $0 \leq i \leq n$, it follows that
$w_0...w_i \in \cl(X) = \cl(\natext)$ for all $0 \leq i \leq n$. Set $\alpha_i = \sig(w_0w_1...w_i)$. By definition $\sig(w_0w_1...w_i)$ is a significant block in $\natext$ that ends with the letter $w_i$. It follows that if $\alpha_0 \to \alpha_1 \to \cdots \to \alpha_n$ is a path on $\cd$ then $\hat{\pi}(\alpha_0\alpha_1...\alpha_n) = w_0w_1...w_n.$

It remains to show that for $0 \leq i \leq n-1$ there exist arrows from $\alpha_i \to \alpha_{i+1}$ in the HB diagram of $X.$ By definition $\alpha_i \to \alpha_{i+1}$ if and only if $\alpha_{i+1} = \sig(\alpha_iw_{i+1}).$ Since $\alpha_i = \sig(w_0w_1...w_i)$, it suffices to show that $\alpha_{i+1} = \sig(\sig(w_0w_1...w_i)w_{i+1}).$ Lemma \ref{iteratedsig} implies that $$\sig(\sig(w_0w_1...w_i)w_{i+1}) = \sig(w_0w_1...w_iw_{i+1}),$$ where $$\sig(w_0w_1...w_iw_{i+1}) = \alpha_{i+1}.$$ Thus, the desired path exists. Furthermore, this path is unique by Theorem \ref{distinctpath}.
\end{proof}

While it is not true in general, here we have shown that if $B$ is a block in $\cl(X)$ then there exists a path on $\cd$ starting with a block of length one that projects to the block $B$, provided $\cl(X) = \cl(\natext)$. It immediately follows that if $X$ is a subshift with $\cl(X) = \cl(\natext)$, then for any point $x$ in the one-sided subshift $X^+$ there exists a unique path $\alpha$ in the HB diagram of $X$ starting with a block of length one such that $\hat{\pi}^+(\alpha) = x.$ Furthermore, from Theorems \ref{distinctpath} and \ref{pathprop} we get the following corollary.

\begin{cor}
Let $X$ be a one or two-sided subshift with $\cl(X)= \cl(\natext)$ and let $p_X$ be the complexity function of $X$.
Then the number of distinct paths of length $n$ that occur on $\cd$, the HB diagram of $X$, that begin with a block of length one is equal to $p_X(n).$
\label{complexity}
\end{cor}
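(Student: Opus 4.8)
The plan is to derive Corollary \ref{complexity} by combining the bijection between paths and blocks established in Theorem \ref{pathprop} with the injectivity result of Theorem \ref{distinctpath}. The goal is to exhibit a bijection between the set of length-$n$ paths on $\cd$ beginning at a vertex of length one and the set $\cl_n(X)$ of $n$-blocks in the language of $X$; since $|\cl_n(X)| = p_X(n)$ by definition of the complexity function, counting both sides of the bijection gives the claim.

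First I would fix $n \in \Bn$ and let $\cp_n$ denote the collection of paths $\alpha_0 \to \alpha_1 \to \cdots \to \alpha_{n-1}$ of length $n$ on $\cd$ with $\alpha_0$ a block of length one. I would define a map $\Phi \colon \cp_n \to \cl_n(X)$ by $\Phi(\alpha_0 \to \cdots \to \alpha_{n-1}) = \hat{\pi}(\alpha_0\alpha_1\dots\alpha_{n-1})$. This is well-defined because each arrow of $\cd$ projects (by appending the last letter of the target vertex) to a legitimate transition, so the projected word $\hat{\pi}(\alpha_0\dots\alpha_{n-1}) = a_0a_1\dots a_{n-1}$, where $a_i$ is the last letter of $\alpha_i$, lies in $\cl(X)$ and has length $n$.

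Next I would verify that $\Phi$ is injective and surjective. Injectivity is exactly the forward direction of Theorem \ref{distinctpath}: if two length-$n$ paths starting at vertices of length one have the same projection, then the paths coincide vertex by vertex, so distinct paths in $\cp_n$ have distinct images under $\Phi$. For surjectivity I would invoke the hypothesis $\cl(X) = \cl(\natext)$ and apply Theorem \ref{pathprop}: given any block $w = w_0w_1\dots w_{n-1} \in \cl_n(X)$, that theorem produces a (unique) path $\alpha_0 \to \cdots \to \alpha_{n-1}$ with $\alpha_0 = w_0$ a block of length one and $\hat{\pi}(\alpha_0\dots\alpha_{n-1}) = w$, so $w$ lies in the image of $\Phi$. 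Hence $\Phi$ is a bijection and $|\cp_n| = |\cl_n(X)| = p_X(n)$.

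I do not expect any genuine obstacle here, since both the injectivity and surjectivity are essentially repackagings of the two preceding theorems; the only point requiring minor care is the small bookkeeping mismatch in indexing between Theorem \ref{pathprop}, which is stated for blocks $w_0\dots w_n$ of length $n+1$ and paths $\alpha_0 \to \cdots \to \alpha_n$, and the statement of the corollary, which counts paths of length $n$ (that is, with $n$ vertices $\alpha_0,\dots,\alpha_{n-1}$) against blocks of length $n$. I would simply apply Theorem \ref{pathprop} with its block of length $n$ (namely $w_0\dots w_{n-1}$) to keep the two counts aligned, noting that the existence-and-uniqueness conclusion there already packages both surjectivity (existence) and the relevant half of injectivity (uniqueness) for the correspondence, so the argument is complete once the counts on the two sides are identified.
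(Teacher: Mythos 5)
Your proposal is correct and matches the paper's (implicit) argument exactly: the paper derives Corollary \ref{complexity} directly from Theorems \ref{distinctpath} and \ref{pathprop}, using the former for injectivity and the latter for surjectivity of the projection from length-$n$ paths starting at length-one vertices to blocks in $\cl_n(X)$, just as you do. Your note on the indexing bookkeeping is a reasonable extra care but introduces nothing beyond the paper's route.
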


We provide an alternate statement and proof of Corollary \ref{complexity} that is specific to Sturmian systems.

\begin{theorem}
Let $(X_u^+, \sigma)$ be a Sturmian system. In the HB diagram of $X_u^+$, for $n\geq 1$ there are $p_u(n) =n+1$ paths of length $n$ starting from either the vertex labeled 0 or the vertex labeled 1. %Furthermore, if $\alpha_0\to \alpha_2 \to \cdots \to \alpha_{n-1}$ and $\beta_0\to \beta_2 \to \cdots \to \beta_{n-1}$ are two distinct paths of length $n$ with $\alpha_0=\beta_0 \in \{0,1\}$ then $\hat{\pi}(\alpha_0\alpha_1...\alpha_{n-1}) \neq \hat{\pi}(\beta_0\beta_1...\beta_{n-1}) $.
\label{sturmpath}
\end{theorem}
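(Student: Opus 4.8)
The plan is to apply Corollary~\ref{complexity} directly. The final statement, Theorem~\ref{sturmpath}, asserts that for a Sturmian system $(X_u^+, \sigma)$, the number of paths of length $n$ starting from a vertex of length one (i.e.\ from $0$ or $1$) equals $p_u(n) = n+1$. Since every Sturmian system is minimal—Sturmian sequences are syndetically recurrent—Corollary~\ref{corleftext} gives $\cl(X_u^+) = \cl(\natext_u)$. Hence the hypotheses of Corollary~\ref{complexity} are satisfied, and that corollary immediately yields that the number of distinct paths of length $n$ beginning at a block of length one equals $p_{X_u^+}(n) = n+1$. In that sense the theorem is a corollary of the general theory already developed.

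However, the theorem is billed as having a proof ``specific to Sturmian systems,'' so the intended approach is presumably more combinatorial and self-contained, relying on the explicit description of the HB diagram from Theorem~\ref{sturmthm}. First I would set up an inductive count: let $N(n)$ denote the number of length-$n$ paths starting at $0$ or at $1$, and argue that $N(1) = 2$ (the two singleton vertices) and that $N(n+1) = N(n) + 1$. The increment of exactly one reflects the Sturmian complexity growth $p_u(n+1) - p_u(n) = 1$. Each path of length $n$ extends to at least one path of length $n+1$ via the ``spine'' arrows $0L_k \to 0L_{k+1}$ and $1L_k \to 1L_{k+1}$ of Lemma~\ref{Lemma2} (equivalently item (2) of Theorem~\ref{sturmthm}), which always exist; the extra path arises from the unique extra outgoing arrow at whichever vertex is a right special block, as pinned down by Lemma~\ref{Lemma3}.

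The key structural facts I would lean on are: (i) by Theorem~\ref{distinctpath}, distinct paths of length $n$ from a length-one vertex project to distinct blocks in $\cl(X_u^+)$, so the count $N(n)$ is injective into $\cl_n(u)$; (ii) by Theorem~\ref{pathprop} together with $\cl(X_u^+) = \cl(\natext_u)$, every block $w \in \cl_n(u)$ is hit by a path starting at the vertex $w_0 \in \{0,1\}$, giving surjectivity. These two together establish a bijection between length-$n$ paths from length-one vertices and blocks of length $n$, so $N(n) = p_u(n) = n+1$, which is exactly the claim. I would phrase the argument so that it reads as a direct bijective count rather than invoking Corollary~\ref{complexity} verbatim, to honor the ``alternate proof'' framing.

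The main obstacle I anticipate is purely bookkeeping: verifying that the branching recorded in Theorem~\ref{sturmthm} produces exactly one \emph{net new} path per length increment, i.e.\ that the right special vertex contributes one additional path while no path dies out. Establishing that no path terminates amounts to checking every significant block has at least one outgoing arrow, which follows from Lemma~\ref{Lemma2} since $xL_{n-1} \to xL_n$ always exists; establishing ``exactly one new branch'' requires knowing that at each length there is a unique right special block, which is the defining complexity property of Sturmian sequences. Thus the delicate point is not conceptual but lies in cleanly matching the combinatorial branching data of the diagram to the complexity increment $p_u(n+1) - p_u(n) = 1$.
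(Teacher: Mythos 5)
Your proposal is correct and follows essentially the same route as the paper: an inductive count with $P_1=2$ and increment $+1$, using Theorem~\ref{distinctpath} to bound the path count by the block count, the guaranteed arrows $xL_{n-1}\to xL_n$ to show no path dies, and the uniqueness of the right special block (whose corresponding path terminates at a right special significant vertex and hence branches in exactly two ways) to produce the single new path at each stage. The only detail the paper spells out that you leave as ``bookkeeping'' is verifying, via Corollary~\ref{iteratedsig}, that the terminal vertex of the path projecting to the right special block is itself a right special \emph{significant} block.
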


\begin{proof}

We denote the number of paths of length $n$ by $P_n$. Let $n =1$. As any path can begin with 0 or 1, $P_1 = 2$. Next let $n =2$. If 0 is right special, the paths of length two are $0 \to 00, 0 \to 1$, and $1 \to 10$. If 1 is right special, the paths of length two are $1 \to 11, 1 \to 0$, and $0 \to 01$. In either case, $P_2 =3$.

We proceed by induction. Fix $n\geq 2$ and assume $P_{n-1} = p_u(n-1)$. Then $P_{n-1} = n$. We wish to show that $P_{n} = n+1$. Consider the $n$ distinct paths of length $n-1$. Because each of these paths can be continued, there are at least $n$ paths of length $n$. Suppose there are $n+2$ paths of length $n$. From Theorem \ref{distinctpath},
each distinct path of length $n$ starting with either 0 or 1 yields a distinct block of length $n$ by reading off the last symbol of every vertex encountered. Then, that there are $n+2$ paths of length $n$ implies that there are $n+2$ distinct blocks of length $n$ in $\cl(X_u^+)$. This contradicts $p_u(n) = n+1$, so $n \leq P_n < n+2$.

To prove that $P_n = n+1$, we show that exactly one of the paths of length $n-1$ can be continued in two ways. Consider the $n$ paths of length $n-1$ with initial vertex 0 or 1. Each of these paths projects to a distinct block of length $n-1$, hence there is a path corresponding to every block in $\cl_n(u)$. It follows that exactly one of these blocks is right special. Call this block $w = l_{n-1}l_{n-2}...l_1$, where $l=l_1l_2l_3...$ is the left special sequence of $u$. Let $\alpha_0 \to \cdots \to \alpha_{n-2}$ be the path that projects to $w$. That is $w = \hat{\pi}(\alpha_0...\alpha_{n-2}) = l_{n-1}l_{n-2}...l_1.$ By Corollary \ref{iteratedsig}, $\alpha_{n-2} = \sig(l_{n-1}l_{n-2}...l_0)$. However, $\sig(l_{n-1}l_{n-2}...l_0) = l_ml_{m-1}...l_0$, $0 \leq m \leq n-1$. Thus $\alpha_{n-2}$ is a right special significant block. This implies that the path $\alpha_0 \to \cdots \to  \alpha_{n-2}$ can be continued in two ways. Thus, there are exactly $n+1$ paths of length $n$, as desired.
\end{proof}
\noindent In this proof, we not only show that there are $p_u(n)$ paths of length $n$ with initial vertex 0 or 1, but we identify the path of length $n$ that extends in two ways.

It follows from Corollary \ref{complexity} that given a subshift $X$ with the property that $\cl(X) = \cl(\natext)$, we can recover the complexity function for $X$ by counting paths in the HB diagram of $X$.
In Section \ref{Morse} we construct the HB diagram of the Morse minimal subshift (see Figure \ref{fig:MMS}).
 The complexity function of the Morse minimal subshift is  given by (see \cite[Ch. 5]{Fogg})
 $p_\omega(1) =2$, $p_\omega(2) =4$ and for $n \geq 3$ if $n=2^r+q+1$, $r \geq 0$, $0<q\leq 2^r$, then
\begin{center}
$p_\omega(n) =
\begin{cases}
6(2^{r-1})+4q& \text{if } 0 < q \leq 2^{r-1} \\
8(2^{r-1})+2q & \text{if } 2^{r-1}<q \leq 2^r.\\
\end{cases}$
\end{center}

\noindent It is apparent in examining the portion of the HB diagram of the Morse minimal subshift shown in Figure \ref{fig:MMS} that the number of paths with initial vertex 0 or 1 is equal to $p_\omega(n)$ for $n \leq 8$.

Let $X$ be any subshift and $\cd$ its HB diagram. Recall that $X(\cd)$ is the set of all bi-infinite paths that occur on $\cd$.
\begin{definition}
A sequence $a \in \natext$ is \emph{eventually Markov} at time $p \in \Bz$ if there exists $N=N(x,p)$ such that for all $n \geq N$
\begin{center}
$\fol(a_{p-n}...a_p) = \fol(a_{p-N}...a_p).$
\end{center}
The \emph{eventually Markov part} $\natext_M \subset \natext$ is the set of $a \in \natext$ which are eventually Markov at all times $p \in \Bz$.
\end{definition}
The following theorem, due to Hofbauer and Buzzi,  shows that $\hat{\pi}: X(\cd) \to \natext_M$ is an isomorphism \cite{Buzzi, Buzzi1,Hofbauer}.
\begin{theorem}
The natural projection $\hat{\pi}$ from the Hofbauer shift $X(\cd)$ to the subshift $\natext$ defined by
\begin{center}
$\hat{\pi}: \alpha \in X(\cd) \mapsto a \in \natext$
\end{center}
 with $a_n$ the last symbol of the block $\alpha_n$ for all $n \in \Bz$ is well defined and is a Borel isomorphism from $X(\cd)$ to $\natext_M$.
\label{isothm}
\end{theorem}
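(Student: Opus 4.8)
The plan is to produce an explicit inverse for $\hat{\pi}$ and to show that its image is exactly $\natext_M$, with both $\hat{\pi}$ and its inverse Borel. The guiding observation is that for a fixed $p$ the follower sets $\fol(a_{p-m}\dots a_p)$ are nested decreasingly in $m$, since extending the left context can only shrink the set of admissible followers, and each strict drop records a significant suffix. Hence $a$ is eventually Markov at $p$ precisely when only finitely many such drops occur, i.e.\ precisely when $\sig(a_{p-m}\dots a_p)$ is eventually constant in $m$; in that case I write $S_p(a)$ for its stabilized value, which is a significant block ending in $a_p$. This equivalence between the stabilization of $\fol$ (the definition of $\natext_M$) and the stabilization of $\sig$ (what the diagram records) is what links the two objects.

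First I would check that $\hat{\pi}$ is well defined, i.e.\ that $\hat{\pi}(\alpha)=a\in\natext$ for each bi-infinite path $\alpha$. Applying Corollary \ref{iteratedsig} to any finite sub-path shows that every block $a_p\dots a_q$ is a suffix of a block of the form $\sig(\cdots)$, hence lies in $\cl(\natext)\subseteq\cl(X^+)$; since $X^+$ is closed, this forces every right ray $a_pa_{p+1}\dots\in X^+$, so $a\in\natext$. Continuity of $\hat{\pi}$ is immediate because $a_p$ depends only on $\alpha_p$, and in particular $\hat{\pi}$ is Borel.

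Next I would identify the image and prove injectivity at the same time. Fixing $p$ and applying Corollary \ref{iteratedsig} to the sub-path $\alpha_{p-j}\to\cdots\to\alpha_p$, together with the fact that the letters of the significant vertex $\alpha_{p-j}$ must coincide with $a_{p-j-k}\dots a_{p-j}$ for $k=|\alpha_{p-j}|-1$, yields $\alpha_p=\sig(a_{p-j-k}\dots a_p)$ for every $j$. Letting $j\to\infty$ shows that $\sig(a_{p-m}\dots a_p)$ is eventually constant and equal to $\alpha_p$, so $a\in\natext_M$ and, crucially, $\alpha_p=S_p(a)$ is determined by $a$; this gives $\hat{\pi}(X(\cd))\subseteq\natext_M$ and injectivity of $\hat{\pi}$, with inverse $a\mapsto(S_p(a))_{p\in\Bz}$. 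Conversely, given $a\in\natext_M$ I would set $\alpha_p=S_p(a)$ and check this is a path: each $\alpha_p$ is significant, hence a vertex, and Lemma \ref{siglem} in the form $\sig(a_{p+1-m}\dots a_{p+1})=\sig(\sig(a_{p+1-m}\dots a_p)\,a_{p+1})$, with $m$ large, gives $\alpha_{p+1}=\sig(\alpha_p a_{p+1})$, while $\alpha_p a_{p+1}$ is a suffix of a block of $a$ and so lies in $\cl(\natext)$; this is exactly the arrow condition. Thus $\alpha\in X(\cd)$ and $\hat{\pi}(\alpha)=a$, so the image is all of $\natext_M$. The inverse $a\mapsto(S_p(a))_p$ is Borel because each $S_p$ is a pointwise limit of the locally constant, hence continuous, functions $a\mapsto\sig(a_{p-m}\dots a_p)$.

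The main obstacle, and the reason the conclusion is only a Borel rather than a topological isomorphism, is that the stabilization time $N(a,p)$ is not locally uniform: two points of $\natext_M$ can agree on a long central window yet have significant forms $S_p$ that differ, because stabilization has not yet occurred within that window. Consequently $\hat{\pi}^{-1}$ need not be continuous, and the care required lies in packaging $S_p$ as a genuine pointwise limit of continuous functions so that the inverse lands in the Borel category, together with making the $\fol$-versus-$\sig$ stabilization equivalence precise so that the image is identified with $\natext_M$ exactly.
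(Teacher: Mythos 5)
The paper does not actually prove Theorem \ref{isothm}; it is quoted as a known result of Hofbauer and Buzzi with citations, so there is no in-paper argument to compare against. Your proposal is a correct, self-contained reconstruction of the standard argument, and it meshes well with the machinery the paper does develop: you use Lemma \ref{siglem} and Corollary \ref{iteratedsig} exactly where they are needed (the former to verify the arrow condition for the candidate inverse path $p \mapsto S_p(a)$, the latter to identify $\alpha_p$ with a significant form of a long suffix of $a$). The key conceptual step --- that eventual Markovianity at $p$ (stabilization of the nested follower sets) is equivalent to eventual constancy of $\sig(a_{p-m}\dots a_p)$ in $m$ --- is stated and justified correctly, and your explanation of why the isomorphism is only Borel is the right one. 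Two small points you should spell out if you write this up: first, the claim that the vertex $\alpha_{p-j}$ literally spells the letters $a_{p-j-k}\dots a_{p-j}$ requires iterating the observation that each arrow makes its target a suffix of (source block)$\cdot$(new letter) along the backward-infinite part of the path; second, your conclusion that $\sig(a_{p-m}\dots a_p)$ is \emph{eventually} constant, rather than merely equal to $\alpha_p$ along the subsequence $m=j+k_j$, needs the monotonicity of $|\sig(a_{p-m}\dots a_p)|$ in $m$, which does follow from the nestedness of follower sets you note at the outset but deserves an explicit sentence.
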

\noindent Hence, one could say that $X(\cd)$ is ``partially isomorphic" to $\natext.$

 It is apparent that the HB diagram of a Sturmian system $X_u^+$ does not contain any bi-infinite paths, thus $X(\cd)_u$ is the empty set. This may seem alarming, but it turns out that the eventually Markov part of $\natext_u$ is also empty. In fact, we show that if the natural extension of a subshift is infinite and minimal, then the eventually Markov part of the natural extension is empty. Thus, it will follow that if $X^+$ is minimal the isomorphism in Theorem \ref{isothm} is between two copies of the empty set.
 {Nevertheless, Theorem \ref{pathprop} gives an isomorphism between $X(\cd)^+$ (paths in the HB diagram that start with blocks of length one, see Definition \ref{def:one-sidedpaths}) and $X^+$.}

\begin{prop}
If $X^+$ is a subshift  such that $\natext$ is infinite and minimal, then the eventually Markov part of $\natext$ is empty.
\label{evMark}
\end{prop}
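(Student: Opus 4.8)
The plan is to argue by contradiction, converting the hypothesis into a statement about paths and then exploiting aperiodicity. Suppose $\natext_M \neq \emptyset$ and fix $a \in \natext_M$. By definition $a$ is eventually Markov at every time $p$, so each follower set $\fol(a_{p-n}\dots a_p)$ is constant for large $n$ and the significant form $\sig(a_{p-n}\dots a_p)$ stabilizes to a fixed significant block $\alpha_p$ of finite length $\ell_p$. By Theorem \ref{isothm} the sequence $(\alpha_p)_{p\in\Bz}$ is exactly a bi-infinite path in the HB diagram $\cd$ with $\hat\pi(\alpha)=a$; thus it suffices to show that an infinite minimal $\natext$ admits no bi-infinite path. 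Before the main step I would record two facts: since $\natext$ is infinite it is aperiodic, so by the Proposition following Lemma \ref{consecsig} there are significant blocks of every length, and by Corollary \ref{corleftext} minimality gives $\cl(X^+)=\cl(\natext)$, so every significant block occurs syndetically in $a$; in particular $\sup_p \ell_p=\infty$.

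Next I would analyze the path going backward using the predecessor structure. For an arrow $\alpha_{p-1}\to\alpha_p$ one has $\alpha_p=\sig(\alpha_{p-1}a_p)$, so $|\alpha_p|\le|\alpha_{p-1}|+1$; the \emph{straight} predecessor of $\alpha_p$ (its block with the last symbol deleted, still significant by Lemma \ref{consecsig}) has length $|\alpha_p|-1$, while any other predecessor has length $\ge|\alpha_p|$. Hence, reading backward, each step either decreases the length by one or jumps up, and a length-one vertex has no straight predecessor. I would first dispose of the case in which the vertex lengths $|\alpha_p|$ stay bounded along the path: then the path lives in the finite subgraph of significant blocks of bounded length, so some vertex recurs, producing a cycle in $\cd$; projecting under $\hat\pi$ (and using Corollary \ref{iteratedsig}) this forces $a$ to be eventually periodic in one direction, and uniform recurrence then makes $a$ periodic, contradicting aperiodicity of an infinite subshift.

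The remaining, and principal, case is a bi-infinite path whose lengths are unbounded; equivalently the backward direction must contain infinitely many upward ("long") jumps, i.e. infinitely many positions at which the significant form drops. My plan here is to push the previous idea quantitatively: a drop at position $p$ means $a_{p-1}\cdots$ has passed a right-special refinement, and between drops the path is straight, so the ``peaks'' of the length profile record how far into the past is relevant to the future at each time. I would use uniform recurrence to show that the profile of straight-runs and jumps is itself syndetically recurrent, forcing a repeated excursion and hence, again, eventual periodicity and a contradiction. \textbf{The main obstacle} is exactly controlling these unbounded excursions: significance is \emph{not} preserved under deleting the \emph{leftmost} symbol, so a long significant block occurring in $a$ does not hand us significant suffixes of all shorter lengths ending at the same position, and the arbitrarily long significant blocks guaranteed by infiniteness a priori sit at different positions and on different left rays. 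The way I expect to overcome this is a compactness/diagonal argument inside the minimal set: take occurrences in $a$ of longer and longer significant blocks, pass to a limit of the corresponding shifts $\sigma^{q_i}a$ in $\natext$, and check that in the limit the follower set of the left ray strictly refines at unboundedly many stages, so the limit point fails to be eventually Markov at time $0$; shift invariance of $\natext_M$ together with minimality (the orbit of $a$ is dense and the relevant refinements are witnessed by blocks occurring in every point) would then transport this failure back to $a$ itself, contradicting $a\in\natext_M$. As a safeguard I would also develop the parallel route showing that uniform control of all these follower sets would make $\natext$ sofic, which is impossible for an infinite minimal subshift.
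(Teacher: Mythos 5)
Your reduction to ``the HB diagram of an infinite minimal subshift admits no bi-infinite path'' is legitimate (via Theorem \ref{isothm}), but the reduced statement is never actually proved, and the plan for your principal case has a genuine gap. In the unbounded case you propose to take occurrences of longer and longer significant blocks, pass to a limit point $y=\lim\sigma^{q_i}a$, observe that $y$ fails to be eventually Markov at time $0$, and then ``transport this failure back to $a$'' using shift invariance of $\natext_M$ and minimality. That last step does not work: $\natext_M$ is shift-invariant but there is no reason for it to be closed, so a limit of shifts of the eventually Markov point $a$ can perfectly well lie outside $\natext_M$ without contradicting $a\in\natext_M$. Eventual Markovness at time $0$ is a property of the entire left ray $\dots y_{-1}y_0$, whereas minimality only controls finite blocks, so density of orbits gives no mechanism for pulling the failure at $y$ back to $a$. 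Your bounded case also has a smaller defect: a recurring vertex in a finite subgraph yields a cycle in $\cd$, but a bi-infinite path through that vertex need not traverse the cycle forever, so eventual periodicity of $a$ does not follow as stated (this case is repairable, e.g.\ by the sofic-shift observation you mention only as a ``safeguard,'' but the repair is not carried out).

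The paper's proof avoids all of this machinery and works with eventual Markovness at a \emph{single} time $p$. Setting $B=x_{p-N}\dots x_p$, left recurrence (from minimality) produces an occurrence $BAB=x_{p-n}\dots x_p$ ending at time $p$; the hypothesis $\fol(BAB)=\fol(B)$ then lets one splice the ray beginning $x_pAB\cdots$ onto $BAB$, producing a point of $X^+$ with prefix $BABAB$, and iterating gives prefixes $BABAB\cdots AB$ of every length. A limit (via Proposition \ref{extcons}) is a periodic point of $\natext$, contradicting infinite minimality. You would do better to abandon the path-length case analysis and run this direct pumping argument; it needs neither Theorem \ref{isothm} nor any control of the length profile of significant forms.
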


\begin{proof}
Suppose on the contrary that there exists $x \in \natext$ that is eventually Markov at time $p \in \Bz$.
Then there exists $N=N(x,p)$ such that for all $n \geq N$,
\begin{center}
$\fol(x_{p-n}...x_p) = \fol(x_{p-N}...x_p).$
\end{center}
Let $B= x_{p-N}...x_p.$
%Let $B = a_{p-N}...a_p$.
%Since $X^+$ is minimal, it follows that every block is left extendable, hence $\cl(X^+) = \cl(\natext).$ Furthermore, every block  in $\cl(a)$ appears with bounded gap to the left and right $a$.
Since $x$ is left recurrent there exists $n > 2N+1$ such that $x_{p-n}...x_{p-n+N} = B$. Let $A = x_{p-n+N+1}...x_{p-N-1}$.

By the definition of $\natext$, $x_{p-n}x_{p-n+1}...=BABx_{p+1}x_{p+2}... \in X^+$. Thus there exists a ray $r_1$ in $\fol(B)$ that has $x_{p-n+N}AB = x_pAB$ as a prefix. Since
\begin{center}
$\fol(B) =\fol(x_{p-N}...x_p)= \fol(x_{p-n}...x_p) = \fol(BAB)$,
\end{center}
 it follows that $r_1 \in \fol(BAB).$ This implies that there exists an $a^{(1)} \in X^+$ with prefix $BABAB$.

Since $a^{(1)} \in X^+$, there exists a ray $r_2$ in $\fol(B)$ that has $a_pABAB$ as a prefix. Then $\fol(B) = \fol(BAB)$ implies that $r_2 \in \fol(BAB)$. Hence there exists $a^{(2)} \in X^+$ with prefix $BABABAB$. Continuing in this manner, we construct a sequence $(a^{(n)}) \subset X^+$ with
\begin{center}
$\displaystyle \lim_{n \to \infty}(a^{(n)}) = BABABABABA...$.
\end{center}
Let $b^{(n)}=0^{\infty}.a^{(n)}$ and $x_n(a^{(n)})= \sigma^nb^{(n)}$ as in Proposition \ref{extcons}. Then any limit point $y$ of $(x_n(a^{(n)}))$ is a periodic bisequence in $\natext$. This is a contradiction, since $\natext$ does not contain any periodic points. Thus $x$ is not eventually Markov at any time $p \in \Bz$ and $\natext_M$, the eventually Markov part of $\natext$, is empty.
\end{proof}

As previously discussed, a Sturmian sequence $u$ is syndetically recurrent and is not periodic. Since $X_u^+$ is the orbit closure of the almost periodic sequence $u$ it follows that $X_u^+$ is minimal. Furthermore, since $u$ is not periodic, $X_u^+$ is infinite. However, a priori, we don't know that $\natext_u$ is minimal.

\begin{prop}
If $X^+$ is minimal, then the natural extension $\natext$ of $X^+$ is minimal. In fact, for any $x \in \natext$ both the forward orbit $\co^+(x) = \{\sigma^nx | n \geq 0 \}$ and the backward orbit $\co^{-}(x) = \{\sigma^{-n}x | n \geq 0 \}$ are dense in $\natext$. %Furthermore, if $X^+$ is also infinite, then $\natext$ is infinite.
\label{inf}
\end{prop}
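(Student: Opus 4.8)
The plan is to reduce the statement to a single combinatorial approximation claim and then treat the forward and backward orbits separately, the backward one being where the real work lies. Recall that the metric makes two bisequences close exactly when they agree on a long central window: $d(z,y)<1/2^N$ as soon as $z_k=y_k$ for all $|k|\le N$. Since $\sigma$ is a homeomorphism of $\natext$, to prove that $\co^+(x)$ (resp. $\co^-(x)$) is dense it therefore suffices to show that for every $y\in\natext$ and every $N\ge 0$ there is an integer $n\ge 0$ with $(\sigma^n x)_k=y_k$ (resp. $(\sigma^{-n}x)_k=y_k$) for all $|k|\le N$. Writing $W=y_{-N}y_{-N+1}\dots y_N$, this amounts to finding an occurrence of the block $W$ in $x$: at a start position $\ge 0$ for the forward orbit, and at a start position $\le -N$ for the backward orbit. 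Note that $W\in\cl(X^+)$, because $y\in\natext$ forces the ray $y_{-N}y_{-N+1}\dots$ to lie in $X^+$, so $W$ is a prefix of a point of $X^+$.

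The key input I would establish first is a uniform syndeticity statement for the minimal one-sided system $X^+$: for each block $W\in\cl(X^+)$ there is a constant $R=R(W)$ such that $W$ occurs in every block of length $R$ appearing in $X^+$. This follows from the standard compactness argument: the cylinder $[W]\subset X^+$ is nonempty and open, minimality gives $\bigcup_{k\ge 0}\sigma^{-k}[W]=X^+$, and compactness reduces this to a finite union $\bigcup_{k=0}^{R'}\sigma^{-k}[W]=X^+$. Hence in every point of $X^+$ the block $W$ occurs within each window of length $R:=R'+|W|$.

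For the forward orbit the argument is easy. The ray $a=x_0x_1x_2\dots=\pi_{X^+}(x)$ lies in $X^+$, so by minimality $W\in\cl(X^+)=\cl(a)$ occurs in $a$ at some position $j\ge 0$; taking $n=j+N\ge 0$ makes $\sigma^n x$ agree with $y$ on $[-N,N]$, which gives density of $\co^+(x)$.

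The backward orbit is the main obstacle, and here the uniform constant $R(W)$ is essential. Merely knowing that each ray $x_px_{p+1}\dots\in X^+$ contains $W$ only locates an occurrence at some position $\ge p$, which need not be negative enough; syndeticity is needed to pin the occurrence down near the left end of the ray. Concretely, for $M\ge N+R$ the ray $x_{-M}x_{-M+1}\dots\in X^+$ must contain $W$ within its first $R$ symbols, i.e. at a start position $j$ with $-M\le j\le -M+R-|W|\le -N$; taking $n=-j-N\ge 0$ makes $\sigma^{-n}x$ agree with $y$ on $[-N,N]$, so $\co^-(x)$ is dense as well. Since the forward (indeed the backward) orbit of every point of $\natext$ is dense, every orbit closure is all of $\natext$, which is precisely the minimality of $\natext$.
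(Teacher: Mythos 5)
Your proof is correct and follows essentially the same route as the paper's: both arguments rest on the fact that minimality of $X^+$ forces every block to occur syndetically (with a uniform gap $R(W)$) in every point of $X^+$, and then locate occurrences of a central window of $y$ in the far-left tails $x_{-M}x_{-M+1}\dots \in X^+$ to handle the backward orbit. The paper states this more tersely, routing the "every long enough block of $\cl(\natext)$ contains $W$" step through Corollary \ref{corleftext}, whereas you spell out the compactness argument and the index bookkeeping explicitly; the substance is the same.
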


\begin{proof}
Since $X^+$ is minimal, every block $B \in \cl(X^+)$ appears with bounded gap in each $a \in X^+$. By Corollary \ref{corleftext}, $\cl(X^+) = \cl(\natext).$ Therefore each block $B \in \cl(\natext)$ appears in each long-enough block in $\cl(\natext).$ Hence for all $x \in \natext$, the block $B$ appears with bounded gap to the left and the right in $x$.  Thus $\co^+(x)$ and $\co^{-}(x)$ are dense in $\natext$.
\end{proof}

\begin{remark}
Since any infinite minimal subshift contains no periodic points, it follows from Proposition \ref{inf} that if $X^+$ is both minimal and infinite, then $\natext$ is minimal and infinite.
\end{remark}

\section{The Morse minimal subshift} \label{Morse}
To describe the construction of the HB diagram of one particular substitution system, the Morse minimal subshift, we have to recall some well-known properties of the Prouhet-Thue-Morse sequence,
\begin{center}
$\omega = .\omega_0\omega_1\omega_2... = .0110100110010110....$
\end{center}
For the sake of simplicity, we shall refer to this sequence as the Morse sequence. The one-sided subshift associated with $\omega$ is the \emph{Morse minimal subshift}. It is defined by the pair $(X_{\omega}^+, \sigma)$, where $X_{\omega}^+$ is the closure of $\{\sigma^n\omega | n \in \Bn \}$.

This sequence has many interesting properties. Axel Thue, concerned with constructing bi-infinite sequences on two symbols with controlled repetitions, constructed the two-sided Morse sequence
\begin{center}
$M = ...0110100110010110.0110100110010110...,$
\end{center}
which he defined as having the property that the sequence contains no blocks of the form $BBb$ where $B$ is a block and $b$ is the first letter of $B$ \cite{Coven, Fife}. Thue's results were published in 1912.  In 1917, Marston Morse, not knowing of Thue's results, constructed the Morse sequence in his dissertation. In \cite{Unendingchess} Morse and Hedlund proved that every element in the Morse minimal set, the closure of $\{\sigma^n(M) | n \in \Bz\}$, has the no $BBb$ property. It was later shown by Gottshalk and Hedlund that the elements of the Morse minimal set are the only bi-infinite sequences with the no $BBb$ property \cite{OnlynoBBb}. While the Gottshalk and Hedlund result does not carry over to the one-sided Morse sequence  \cite{Fife}, it is still the case that the one-sided Morse sequence $\omega$ has the no $BBb$ property.

The Morse sequence is also generated by iterating a substitution. Following Chapter 5 of \cite{Fogg}, we recall how this is done and how the construction allows us to deduce important properties of the sequence. Let $\zeta$ be the substitution map defined by $\zeta(0) = 01$ and $\zeta(1) = 10$. The Morse sequence is the infinite sequence which begins with $\zeta^n(0)$ for every $n \in \mathbb{N}$. It follows from this construction that the Morse sequence is syndetically recurrent and neither periodic nor eventually periodic.

\subsection{Recognizability of the Morse substitution}
Since the Morse sequence arises from a substitution map, it is natural to consider how to ``decompose" or ``desubstitute" a block that occurs in $\cl(X_\omega^+).$ The notion of recognizability deals with this problem \cite{Quef}.

\begin{definition} A substitution $\gamma$ over the alphabet $\ca$ is \emph{primitive} if there exists $k \in \Bn$ such that for all $a, b \in \ca$ the letter $a$ occurs in $\gamma^k(b).$
\end{definition}
\noindent In the context of recognizability we consider only primitive substitutions. Note that the Morse substitution $\zeta$ is primitive since 0 and 1 both appear in $\zeta(0)$ and $\zeta(1)$.

Let $u = u_0u_1...$ be any fixed point of an aribitrary primitive substitution $\gamma$.
\begin{definition}
For every $k \geq 1$, $\displaystyle E_k = \{0\} \cup \{|\gamma^k(u_0u_1...u_{p-1})| \hspace{2mm} | \hspace{2mm} p > 0\}$ is the set of \emph{cutting bars of order k}.
\end{definition}

\begin{definition}
The substitution $\gamma$ is said to be \emph{recognizable} if there exists an integer $K > 0$ such that
\begin{center}
$n \in E_1$ and $u_nu_{n+1}...u_{n+K} = u_{m}u_{m+1}...u_{m+K}$ implies $m \in E_1$.
\end{center}
The smallest integer $K$ satisfying this is the \emph{recognizability index} of $\gamma.$
\end{definition}
In other words, a substitution is recognizable if it is possible to determine if $u_m$ is the first letter of a substituted block by examining the $K$ terms that follow it.  The Morse substitution $\zeta$ is recognizable with recognizability index $3$. This means that it is possible to determine if 0 (or 1) is the first letter of $\zeta(0)$ (or $\zeta(1)$) by examining the three letters which follow it.

Note that this definition of recognizability does not satisfactorily guarantee desubstitution in the general setting. Even very simple primitive, aperiodic substitutions may fail to have the recognizability property. For example, the substitution $\gamma$ on the alphabet $\{0,1 \}$ defined by $\gamma(0)=010$ and $\gamma(1)=10$ fails to be recognizable. Brigitte Moss\'e introduces another notion of recognizability, bilateral recognizability, in \cite{Mosse}.
\begin{definition}
A substitution $\gamma$ is said to be \emph{bilaterally recognizable} if there exists an integer $L>0$ such that
\begin{center}
$n\in E_1$ and $u_{n-L}...u_{n+L} = u_{m-L}...u_{m+L}$ implies $m \in E_1$.
\end{center}
\end{definition}
\noindent One advantage of Moss\'e's definition is that every primitive aperiodic substitution is bilaterally recognizable. Furthermore, if
 $u$ is a fixed point of a primitive aperiodic substitution $\gamma$ and
  {$X_u^+ = \text{cl}{\{\sigma^ku| k \in \Bn\}}$ (cl denotes closure)},
 then any block in $\cl(X_u^+)$ can be ``desubstituted" up to some prefix and some suffix at the ends of the block \cite{Mosse, Quef, Fogg}. Since the Morse substitution is recognizable, we do not rely on bilateral recognizability. However, the consequences of bilateral recognizability could be useful in extending the results for the Morse minimal subshift to general substitution systems.

We now consider the decomposition of blocks appearing in the fixed point of a substitution. Let $b= u_i...u_{i+|w|-1}$ be a block appearing in $u$. Since $\gamma(u) = u$ there exists an index $j$, a length $l$, a suffix $S$ of $\gamma(u_j)$ and a prefix $P$ of $\gamma(u_{j+l+1})$ such that
\begin{center}
$b = S\gamma(u_j+1)...\gamma(u_{j+l})P.$
\end{center}
\begin{definition}
Let $b$ be as above. The \emph{1-cutting at the index i} of $b$ is
\begin{center}
$S \dagger \gamma(u_{j+1}) \dagger... \dagger \gamma(u_{j+l}) \dagger P$,
\end{center}
and we say that $b$ comes from the block $u_j...u_{j+l+1}$. The block $u_j...u_{j+l+1}$ is the \emph{ancestor block} of $b$ \cite{Fogg}.
\end{definition}
Note that $S$ and $P$ are not necessarily proper suffixes and prefixes, respectively. Furthermore, the 1-cutting yields a string on an enlarged alphabet. For the Morse sequence this alphabet is $\{0,1,\dagger \}.$

To illustrate this, consider the block $\omega_4...\omega_9 = 1001100$ appearing in the Morse sequence. Let $S = 10 = \zeta(\omega_2)$, and $P = 0$, the one letter prefix of $\zeta(\omega_5)$. Then
\begin{center}
$1001100 = 10 \dagger 01 \dagger 10 \dagger 0 = 10 \dagger \zeta(\omega_3) \dagger \zeta(\omega_4) \dagger 0$,
\end{center}
and has $\omega_2...\omega_5 = 1010$ as an ancestor block.

In this example, it is apparent that the 1-cutting of the block 1001100 partitions the block into a concatenation of the subblocks $10$ and $01$ with daggers in between.
We define the \emph{1-blocks} of the Morse sequence to be the blocks $01$ and $10$. By partitioning a block into its 1-blocks, it is possible to determine its ancestor block. The following lemma, found in \cite{Fogg}, is a result of the recognizability of the Morse sequence.

\begin{lemma}
In the Morse sequence, every block of length at least five has a unique 1-cutting, or decomposition into 1-blocks, possibly beginning with the last letter of a 1-block and possibly ending with the first letter of a 1-block.
\label{1word}
\end{lemma}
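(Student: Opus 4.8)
The plan is to reduce the statement to the recognizability of the Morse substitution $\zeta$ with recognizability index $3$, which has already been recorded above. First I would observe that because $\zeta$ has constant length $2$, the cutting bars of order one are exactly the even positions: since $|\zeta(\omega_0\cdots\omega_{p-1})| = 2p$, the definition of $E_1$ gives $E_1 = \{0,2,4,\dots\}$. Consequently, for any occurrence $b = \omega_i\cdots\omega_{i+|b|-1}$ of a block $b$ in the Morse sequence, the daggers of its $1$-cutting sit precisely at the positions of $b$ that correspond to even positions of $\omega$; the initial piece $S$ is a suffix of the $1$-block $\zeta(\omega_j)$ and the final piece $P$ a prefix of $\zeta(\omega_{j+l+1})$. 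Thus the \emph{entire} $1$-cutting of an occurrence, viewed as a parsing of the abstract word $b$ into the $1$-blocks $01,10$ together with at most one leftover letter at each end, is completely determined by the parity of the starting index $i$ (shifting $i$ by $2$ does not move the even cutting bars relative to the block).

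For \emph{existence} I would use that the Morse minimal subshift is minimal, so every $b\in\cl(X_\omega^+)$ actually occurs in $\omega$. Fixing one such occurrence at position $i$ and placing daggers at the even positions of $\omega$ lying in $[i,i+|b|-1]$ produces a decomposition of $b$ into $1$-blocks, possibly beginning with the last letter of a $1$-block and possibly ending with the first letter of a $1$-block, which is exactly the required $1$-cutting.

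For \emph{uniqueness}, the crucial step is to show that when $|b|\geq 5$ all occurrences of $b$ in $\omega$ begin at indices of the same parity; by the first paragraph this forces every occurrence to yield the same $1$-cutting of the abstract word. I would argue by contradiction: suppose $b$ occurred at an even index $i$ and at an odd index $i'$. Since $|b|\geq 5$, the length-$4$ prefixes agree, that is $\omega_i\cdots\omega_{i+3} = \omega_{i'}\cdots\omega_{i'+3}$. Because $i$ is even we have $i\in E_1$, so recognizability with index $3$ yields $i'\in E_1$, i.e. $i'$ is even, contradicting that $i'$ is odd. Hence the parity of the starting index is an invariant of $b$, and the $1$-cutting is unique.

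The main obstacle is not a hard estimate but the careful translation between the two descriptions in play: the purely combinatorial notion of a $1$-cutting as a parsing of the word $b$ into $01$ and $10$, and the dynamical statement of recognizability, which speaks only about cutting-bar positions inside the fixed point $\omega$. The bridge I would make explicit is that a $1$-cutting amounts to nothing more than a choice of phase, namely the parity of the starting position, so that ``uniqueness of the cutting'' is literally the same assertion as ``all occurrences of $b$ share a starting parity''; once this dictionary is set up, recognizability delivers the parity invariance at once. I would also be careful to invoke minimality first, since it is what guarantees that every block of the language is realized inside $\omega$ and hence that recognizability, a statement about positions in $\omega$, may be applied at all.
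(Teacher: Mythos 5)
Your argument is correct, and there is no in-paper proof to match it against: the paper quotes Lemma \ref{1word} from \cite{Fogg} with only the comment that it ``is a result of the recognizability of the Morse sequence,'' and what you have written is precisely that derivation made explicit. Your dictionary is the right one: for a constant-length-$2$ substitution, $E_1$ is the set of even integers, so the $1$-cutting induced by an occurrence depends only on the parity of its starting index, and recognizability with index $3$ (asserted in the paper, and checkable directly: the $4$-blocks occurring at even positions are $0101,0110,1001,1010$, i.e.\ the images $\zeta(ab)$, while those occurring at odd positions are $0010,0011,0100,1011,1100,1101$, two disjoint sets) forces all occurrences of a block of length at least $4$ to share a parity. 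Two points of comparison are worth noting. First, your proof gives occurrence-based uniqueness already at length $4$, which sits in apparent tension with Remark \ref{smallword}, where $0101$ and $1010$ are listed as exceptions; the resolution is that the remark is counting purely syntactic parsings into $01$ and $10$ (without requiring the induced ancestor to lie in $\cl(X_\omega^+)$), whereas your uniqueness concerns cuttings coming from actual occurrences in $\omega$. For length at least $5$ the two notions coincide, because the no-$BBb$ property forces every such block to contain $00$ or $11$. Second, that last observation is the alternative, self-contained argument implicit in Remark \ref{smallword} and actually exploited later in Lemma \ref{barlem}: a factor $00$ or $11$ must straddle a cutting bar, and a single forced dagger position propagates to determine all the others since $1$-blocks have length $2$. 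That combinatorial route avoids relying on the (here unproved) recognizability index and directly yields the ``dagger in the middle of $00$'' form of uniqueness that the subsequent lemmas consume; your route buys a cleaner conceptual statement --- parity invariance of occurrences --- at the cost of taking the recognizability index as given.
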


\begin{remark}
If a block has a unique 1-cutting, then the block has a unique ancestor block. The only blocks of length less than five appearing in the Morse sequence which do not have a unique partition into 1-blocks are 010, 101, 0101, and 1010, each of which has two possible ancestor blocks. Furthermore, a block has a unique 1-cutting if and only if that block has either 00 or 11 as a subblock.
\label{smallword}
\end{remark}

We denote the \emph{dual} of a letter $a \in \{0,1\}$ by $\overline{a}$. If $a = 0$ then $\overline{a} = 1$ and vice versa. Note that each 1-block consists of a pair of dual letters.
\begin{lemma}
Let $a_{-n}a_{-n+1}a_{-n+2}...a_{0}$ be a block of length $n$ in $\cl(X_{\omega}^+)$ that has a unique 1-cutting. If the unique 1-cutting has a dagger immediately to the right of $a_{-n+1}$, then $a_{-n} = \overline{a}_{-n+1}$. That is, $a_{-n}$  is uniquely determined by $a_{-n+1}.$
\label{barlem}
\end{lemma}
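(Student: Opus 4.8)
The plan is to exploit the rigid structure of the 1-cutting recorded in Lemma \ref{1word}: apart from possibly a single leftover letter at each end, every piece of the cutting is a full 1-block, and the only 1-blocks are $01$ and $10$, each a pair of dual letters. Since $a_{-n}\cdots a_0$ has a unique 1-cutting, Lemma \ref{1word} guarantees that its length is at least five, so in particular the letter $a_{-n+2}$ exists and the cutting contains at least two full 1-blocks. The whole argument then reduces to pinning down where the leftmost piece of the cutting ends.

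First I would analyze the leftmost piece, which necessarily begins at $a_{-n}$. By Lemma \ref{1word} this piece is either a full 1-block or the single letter $a_{-n}$ (the last letter of a 1-block). I would rule out the latter under the stated hypothesis. Indeed, if the first piece were the single letter $a_{-n}$, there would be a dagger between $a_{-n}$ and $a_{-n+1}$, and the next piece would start at $a_{-n+1}$. Because the block has length at least five, this next piece cannot be the final single-letter piece, so it must be an interior full 1-block occupying $a_{-n+1}a_{-n+2}$. But then the dagger following it would sit between $a_{-n+2}$ and $a_{-n+3}$, contradicting the assumption that a dagger sits immediately to the right of $a_{-n+1}$.

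Having excluded the single-letter case, I conclude that the first piece is precisely the full 1-block $a_{-n}a_{-n+1}$. Since the 1-blocks are exactly $01$ and $10$, each consisting of a letter followed by its dual, it follows that $a_{-n} = \overline{a}_{-n+1}$, so $a_{-n}$ is uniquely determined by $a_{-n+1}$, as claimed.

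The main obstacle is not computational but conceptual bookkeeping: one must use the precise form of Lemma \ref{1word}, namely that a length-one piece can occur only at the very beginning or the very end of the cutting, so that any interior piece is forced to have length two. Once this is made explicit, the placement of the dagger immediately to the right of $a_{-n+1}$ forces the first piece to terminate exactly at $a_{-n+1}$, and the dual-letter property of 1-blocks completes the proof.
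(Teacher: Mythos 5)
Your argument is correct in substance but takes a genuinely different route from the paper's. You work directly with the unique 1-cutting of the full block $a_{-n}a_{-n+1}...a_0$ and classify its leftmost piece using the rigid shape guaranteed by Lemma \ref{1word}: an optional single leading letter, then full 1-blocks, then an optional single trailing letter. The hypothesis that a dagger sits immediately after $a_{-n+1}$ then kills the single-letter case, since the piece containing $a_{-n+1}$ would have to be the interior 1-block $a_{-n+1}a_{-n+2}$ and no dagger could immediately follow $a_{-n+1}$; hence the first piece is the 1-block $a_{-n}a_{-n+1}$, and since the 1-blocks $01$ and $10$ consist of dual letters, $a_{-n}=\overline{a_{-n+1}}$. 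The paper argues differently: it uses Remark \ref{smallword} to locate an occurrence of $00$ or $11$, notes that the dagger splitting that occurrence fixes the parity of all dagger positions, and transfers this parity from the cutting of $a_{-n+1}...a_0$ to the cutting of the extended block $a_{-n}...a_0$ in order to force a dagger between $a_{-n+1}$ and $a_{-n+2}$. Your version is shorter because you read the hypothesis as giving the dagger position in the cutting of the block that already contains $a_{-n}$ (the literal reading, and the one matching how the lemma is invoked in Proposition \ref{Morsesig}); the paper's $00$/$11$ anchor is doing the extra work of passing from the cutting of the suffix to the cutting of its left extension. One correction: Lemma \ref{1word} does not say that a block with a unique 1-cutting has length at least five — it states the converse implication, and $00$ is a uniquely cuttable block of length two. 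You invoke this length bound only to exclude the possibility that the piece starting at $a_{-n+1}$ is the final single-letter piece; but that case would force the block to be exactly $a_{-n}a_{-n+1}$, in which case the cutting has no dagger strictly to the right of $a_{-n+1}$ and the hypothesis already fails, so the exclusion goes through without any appeal to a length bound.
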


\begin{proof}
Since $a_{-n+1}a_{-n+2}...a_{0}$ has a unique decomposition into 1-blocks, $a_{-n+1}a_{-n+2}...a_{0}$ has $00$ or $11$ as a subblock. Without loss of generality, suppose $00$ is a subblock of $a_{-n+1}a_{-n+2}...a_{0}$. Since $00$ is not a 1-block, it follows that in the 1-cutting of $a_{-n+1}a_{-n+2}...a_{0}$ there is a dagger in the middle of $00$. Furthermore, since there is a dagger immediately to the right of $a_{-n+1}$, it follows that there exists $k$, $1\leq k \leq n/2$, such that $a_{-n-1+2k}a_{-n+2k} = 00$.

Now consider $a_{-n}a_{-n+1}...a_{0} \in \cl(X_{\omega}^+)$. Since $00$ is a subblock of $a_{-n}a_{-n+1}...a_{0}$, there exists a unique 1-cutting of $a_{-n}a_{-n+1}a_{-n+2}...a_{0}$.
Additionally, there must be a dagger in between $00 = a_{-n-1+2k}a_{-n+2k}$ in the unique 1-cutting. Thus there is a dagger between $a_{-n-1+2i}a_{-n+2i}$ for all $1\leq i \leq n/2.$ Letting $i=1$, this implies that there is a dagger  between $a_{-n+1}a_{-n+2}$. Hence,  $a_{-n}a_{-n+1}$ must be a 1-block and  $a_{-n} = \overline{a_{-n+1}.}$
\end{proof}

\subsection{Significant blocks of the Morse minimal subshift}
Using the properties of the Morse sequence detailed above, we determine the significant blocks of the Morse minimal subshift. Note that it follows from the minimality of the Morse sequence that $\cl(X_{\omega}^+) = \cl(\natext_{\omega}).$

\begin{prop} Let $a_{-n+1}a_{-n+2}...a_{-2}a_{-1}a_{0}$ be a block of length $n \in \mathbb{N}$, $n \geq2$, in $\cl(X_{\omega}^+)$. Then $a_{-n+1}a_{-n+2}...a_{-2}a_{-1}a_{0}$ is significant if and only if $0a_{-n+2}...a_{-2}a_{-1}a_{0}$ and $1a_{-n+2}...a_{-2}a_{-1}a_{0}$ are in $\cl(X_{\omega}^+)$.
\label{Morsesig}
\end{prop}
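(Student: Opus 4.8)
The forward implication is essentially already in hand. If $a_{-n+1}\dots a_0$ is significant, then Proposition~\ref{leftextend}, which applies to any subshift of $\Sigma_2$, shows that $0a_{-n+2}\dots a_0$ and $1a_{-n+2}\dots a_0$ lie in $\cl(\natext_\omega)$; since the Morse system is minimal we have $\cl(X_\omega^+)=\cl(\natext_\omega)$, so both blocks are in $\cl(X_\omega^+)$. Thus the real content is the converse, and I will write $w=a_{-n+2}\dots a_0$ and $x=a_{-n+1}$ and show that if $w$ is \emph{left special} (both $0w$ and $1w$ occur) then $xw$ is significant.

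My plan is to prove the sharper symmetric statement by induction on $|w|$: \emph{if $w$ is left special, then both $0w$ and $1w$ are significant.} The reduction is formal. Every ray in $\fol(w)$ is realized in a point of $\natext_\omega$ in which $w$ is preceded by $0$ or by $1$, so $\fol(w)=\fol(0w)\cup\fol(1w)$; and since $\fol(xw)\subseteq\fol(w)$ always, the block $xw$ is significant precisely when $\fol(\overline{x}\,w)\not\subseteq\fol(xw)$. Hence it suffices to produce, for each choice of $x$, a continuation of $w$ that occurs after $\overline{x}$ but not after $x$.

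For the inductive step I would first locate $w$ in the substitution structure. If $|w|\ge 5$, then $w$ has a unique $1$-cutting by Lemma~\ref{1word} and Remark~\ref{smallword}, and a left special $w$ must begin exactly at a cutting bar: were there a dagger immediately to the right of the first letter of $w$, then prepending any symbol $c$ would force $c$ together with that first letter to be a complete $1$-block, so by Lemma~\ref{barlem} the preceding letter would be determined, contradicting left specialness. Thus $w=\zeta(A)P$, where $A=c_0c_1\dots$ is the ancestor block and $P$ is an empty or one-letter prefix of the next image. Because $w$ starts at a cutting bar, the letter preceding $w$ is the last letter of $\zeta(c_{-1})$, namely $\overline{c_{-1}}$; so $w$ is left special if and only if its ancestor $A$ is left special, with $|A|<|w|$. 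Applying the induction hypothesis to $A$ gives rays $\sigma\in\fol(0A)\setminus\fol(1A)$ and $\sigma'\in\fol(1A)\setminus\fol(0A)$. Since the context $0A$ for the ancestor corresponds to the context $1$ for $w$ and $1A$ corresponds to $0$, and since $\zeta$ is recognizable so that a block occurs in the Morse sequence exactly when its ancestor does, substituting (with the offset dictated by $P$) carries $\sigma$ to a continuation $\rho\in\fol(1w)\setminus\fol(0w)$ and $\sigma'$ to one in $\fol(0w)\setminus\fol(1w)$. This makes both $0w$ and $1w$ significant, closing the induction; the finitely many left special blocks with $|w|\le 4$ serve as base cases and are checked by direct inspection of $\omega$.

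I expect the main obstacle to be the bookkeeping in that last transfer, not the overall idea. Making the correspondence between the follower sets of $w$ and of $A$ precise when $P$ is nonempty requires substituting a continuation of $A$ and then shifting it by the remaining half of an image, and one must argue carefully in both directions: that a continuation occurring after $1A$ substitutes to one occurring after $0w$, and, conversely, that if $0w\rho$ occurred then the image ending in the $0$ would have to be $\zeta(1)=10$, forcing $1A\sigma$ to occur. Both rest on the uniqueness of desubstitution supplied by recognizability, so the delicate point is to phrase the occurrence and non-occurrence transfer so that it remains valid near the left end of the block, where the $1$-cutting is pinned only after reading past the initial cutting bar.
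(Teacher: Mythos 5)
Your argument is correct and is essentially the paper's proof reorganized as an induction: the paper iterates the desubstitution all the way down to a terminal block $S_m$ of length $3$ or $4$, rules out the misaligned cases $\{100,011,1001,0110\}$ via Lemma~\ref{barlem}, constructs an explicit distinguishing ray $\nu$ at that bottom level, and pushes it back up by $\zeta^m$, whereas you perform one desubstitution step at a time under the symmetric hypothesis that both left extensions of a left special block are significant. The ingredients are the same --- Proposition~\ref{leftextend} for the forward direction, the unique $1$-cutting (Lemma~\ref{1word}, Remark~\ref{smallword}), Lemma~\ref{barlem} to force left special blocks to start at a cutting bar, recognizability for the occurrence transfer, and direct inspection of the short blocks --- and the delicate point you flag (the block fed to the inductive hypothesis must include the ancestor letter whose image begins with the leftover prefix $P$, so that continuations of $w$ correspond exactly to continuations of the ancestor) is resolved by exactly the bookkeeping the paper carries out with $\zeta^m(s_{m,1}\dots s_{m,q})$.
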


\begin{proof}
One direction is proved in Lemma \ref{leftextend}.

We first prove the converse for 01,10, 010, 101, 0101, and 1010. By examining the sequence $\omega$ it is apparent that each of these blocks satisfies the hypothesis that $0a_{-n+2}...a_{-2}a_{-1}a_{0}$ and $1a_{-n+2}...a_{-2}a_{-1}a_{0}$ are in $\cl(X_{\omega}^+)$. To prove that each block is significant we construct a ray for each block that is in $\fol(a_{-n+2}...a_{-2}a_{-1}a_{0})$, but not in $\fol(a_{-n+1}a_{-n+2}...a_{-2}a_{-1}a_{0}).$

Consider $010$. Since the Morse sequence contains no blocks of the form $BBb$, $01010 \notin \cl(X_{\omega}^+)$. However, $1010 \in \cl(X_{\omega}^+).$ Thus the ray $\omega_3\omega_4\omega_5... = 0100110...$ is in $\fol(10)$ but not $\fol(010).$ Hence $010$ is significant. Similarly, it can be shown that

\begin{center}

$\omega_{2}\omega_{3}\omega_{4}... = 101001... \in \fol(1)$ but  $101001...  \notin \fol(01)$,

$\omega_{10}\omega_{11}\omega_{12}... = 010110... \in \fol(0)$ but  $010110...  \notin \fol(10)$,

$\omega_{11}\omega_{12}\omega_{13}... = 101101... \in \fol(01)$ but  $101101...  \notin \fol(101)$,

$\omega_{4}\omega_{5}\omega_{6}... = 100110... \in \fol(101)$ but $100110... \notin \fol(0101)$,

 $\omega_{12}\omega_{13}\omega_{14}...=011010...  \in \fol(010)$ but $011010...  \notin \fol(1010).$
 \end{center}
Therefore, 01, 10, 101, 0101, and 1010 are all significant.

We now prove the converse for the remaining blocks $a_{-n+1}a_{-n+2}...a_{-2}a_{-1}a_{0}$ satisfying the property that $0a_{-n+2}...a_{-2}a_{-1}a_{0}$ and $1a_{-n+2}...a_{-2}a_{-1}a_{0}$ are in $\cl(X_{\omega}^+)$. To prove that each of these blocks is significant, we explicitly construct a ray that is in the follower set of $a_{-n+2}...a_{-2}a_{-1}a_{0}$ but not in the follower set of $a_{-n+1}a_{-n+2}...a_{-2}a_{-1}a_{0}$. This is done by repeatedly desubstituting $a_{-n+1}a_{-n+2}...a_{-2}a_{-1}a_{0}$ and choosing a ray based on the ancestor block of $a_{-n+1}a_{-n+2}...a_{-2}a_{-1}a_{0}$.

It follows from Lemma \ref{1word} and Remark \ref{smallword} that each of the remaining blocks has a unique 1-block decomposition.  If $n >3$, then partition $a_{-n+1}a_{-n+2}...a_{-2}a_{-1}a_{0}$ into 1-blocks.
Lemma \ref{barlem} implies that if the unique partition of $a_{-n+1}a_{-n+2}...a_{-2}a_{-1}a_{0}$ has a dagger directly after $a_{-n+2}$, then $a_{-n+1}$ is uniquely determined by $a_{-n+2}$, and hence $0a_{-n+2}...a_{-2}a_{-1}a_{0}$ and $1a_{-n+2}...a_{-2}a_{-1}a_{0}$ are not both in $\cl(X_{\omega})$. It follows that for $n>3$ the partition of $a_{-n+1}a_{-n+2}...a_{-2}a_{-1}a_{0}$ is
\begin{center} $a_{-n+1}\dagger a_{-n+2}a_{-n+3} \dagger \cdots \dagger a_{-2}a_{-1}\dagger a_{0}$ \end{center}
if $n$ is even, and
\begin{center} $a_{-n+1} \dagger a_{-n+2}a_{-n+3} \dagger \cdots \dagger a_{-3}a_{-2} \dagger a_{-1}a_{0}$ \end{center} if $n$ is odd.

Next we map each 1-block in the partition to its preimage under $\zeta$, or ancestor block.
Let $\displaystyle s_{1,i/2}$ denote the preimage of the 1-block $a_{-n+i}a_{-n+i+1},$ where $i$ is even and $2 \leq i \leq n-1.$
Since there is a dagger placed directly after $a_{-n+1}$, $a_{-n+1}$ uniquely determines the letter that can precede it. Let $a_{-n}$ be this letter and let  $s_{1,0}$ denote the preimage of $a_{-n}a_{-n+1}$. Similarly, when $n$ is even $a_0$ uniquely determines $a_1$. In this case let $s_{1,n/2}$ denote the preimage of $a_0a_1.$  The resulting block is $\displaystyle s_{1,0}s_{1,1}...s_{1,(n-1)/2}$ if $n$ is odd, and $\displaystyle s_{1,0}s_{1,1}...s_{1, n/2}$ if $n$ is even. For ease of notation, denote $\displaystyle s_{1,0}s_{1,1}...s_{1,(n-1)/2}$ or $\displaystyle s_{1,0}s_{1,1}...s_{1, n/2}$ by $S_1$. Note that $S_1$ is the ancestor block of $a_{-n+1}a_{-n+2}...a_{-2}a_{-1}a_{0}$.

At this stage, consider the length of  $S_1$. If $|S_1| \geq 4$ and $S_1$ has a unique 1-block partition, then map each 1-block to its preimage under $\zeta$. That is, map $S_1$ to its ancestor block.
If $|S_1| <4$ or $S_1 = 0101$ or $1010$, then do nothing. We continue this process so that in general, if $|S_j| \geq 4$ and $S_j$ can be uniquely partitioned into 1-blocks, we map $S_j$ to its ancestor block,
and otherwise do nothing. During this process, if the 1-block decomposition of $S_j$ has a dagger between $s_{j,0}$ and $s_{j,1}$, then there is only one letter that can precede $s_{j,0}$. Let $s_{j,-1}$ be this letter. Then $s_{j+1,0}$ is defined to be the preimage of the 1-block $s_{j,-1}s_{j,0}.$
\newline

\begin{example}
 We illustrate this process for the block $00110100 \in \cl(X_{\omega}^+).$ The unique 1-block decomposition of $00110100$ is
\begin{center}
$\displaystyle 0 \dagger 01 \dagger 10 \dagger 10 \dagger 0$.
\end{center}
Mapping each 1-block to its preimage under $\zeta$, we get $S_1= 10110$. Since $|10110| = 5$, we partition $10110$ into 1-blocks as follows:
\begin{center}
$\displaystyle 1\dagger 01 \dagger 10$.
\end{center}
Again, map each 1-block to its preimage under $\zeta$ to get $S_2 = 001$. As $|001| < 4$, the decomposition process is complete.
\newline
\end{example}

We claim that there exists an $m \in \mathbb{N}$ such that $S_m = s_{m,0}s_{m,1}s_{m,2}$, where $s_{m,1}s_{m,2} \in \{01,10 \}$; or $S_m = s_{m,0}s_{m,1}s_{m,2}s_{m,3}$, where  $s_{m,1}s_{m,2}s_{m,3} \in \{010,101\}$. Since the decomposition process is repeated until $3 \leq |S_m| \leq 4$ we need only show that
\begin{center}
$S_m \in \{001, 101, 010, 110, 0010,1010, 0101, 1101\}$.
\end{center}
Suppose on the contrary that $S_m$ is not one of the above blocks. Since $S_m \in \cl(X_{\omega}^+)$, it follows that
\begin{center}
$S_m \in \{100, 011, 1001,0110\}$.
\end{center}
Each of these blocks has a unique partitioning in which there is a dagger directly to the right of $s_{m,1}$. Hence $s_{m,0}$ is uniquely determined by ${s_{m,1}}$, and $s_{m,0} = \overline{s_{m,1}}$, by Lemma \ref{barlem}. We prove that if $s_{m,0}$ is uniquely determined by $s_{m,1}$ then there is only one possible value for $a_{-n+1}$.

First suppose that for all $1\leq k < m$ the partition of $S_k$ has a dagger between $s_{k,0}$ and $s_{k,1}$. That is,
\begin{center}
$\displaystyle s_{k,0} \dagger s_{k,1}s_{k,2} \dagger \cdots \dagger s_{k,j}$ \hspace{3mm} and \hspace{3mm} $\displaystyle s_{k,0} \dagger s_{k,1}s_{k,2} \dagger \cdots \dagger s_{k,j}s_{k, j+1}$
\end{center}
are the 1-cuttings for $1 \leq k < m$ when $|S_k|$ is even and odd respectively, but
\begin{center}
$\displaystyle s_{m,0} s_{m,1} \dagger s_{m,2}$ \hspace{3mm} and  \hspace{3mm} $\displaystyle s_{m,0} s_{m,1} \dagger s_{m,2}s_{m,3}$
\end{center}
are the 1-cuttings for $s_{m,0}$ when $S_m$ has length 3 and 4 respectively.

If this is the case, then $\zeta(s_{k+1,0}) = s_{k, -1}s_{k,0}$ for all $1\leq k < m$.
Then
\begin{center}
$\zeta^2(s_{k+1,0})= \zeta(s_{k,-1})\zeta(s_{k,0}) =  \zeta(s_{k,-1})s_{k-1,-1}s_{k-1,0},$
\end{center}
and in general
\begin{center}
$\zeta^j(s_{k+1,0})= \zeta^{j-1}(s_{k,-1})\zeta^{j-2}(s_{k-1,-1})...\zeta(s_{k-j+2,-1})s_{k-j+1,-1}s_{k-j+1,0}$
\end{center}
for $j \leq k$.
Hence
\begin{center}
\begin{align*}
\zeta^m(s_{m,0}) &= \zeta^{m-1}(s_{m-1,-1})\zeta^{m-2}(s_{m-2,-1})...\zeta^2(s_{2,-1})\zeta(s_{1,-1})\zeta(s_1,0) \\
& = \zeta^{m-1}(s_{m-1,-1})\zeta^{m-2}(s_{m-2,-1})...\zeta^2(s_{2,-1})\zeta(s_{1,-1})\overline{a_{-n+1}}a_{-n+1}.
\end{align*}
\end{center}
Thus the last letter of $\zeta^m(s_{m,0})$ is $a_{-n+1}$. Since each desubstitution is unique, this implies that $a_{-n+1}$ is uniquely determined by $s_{m,0}.$

Let $S_m = s_{m,0}...s_{m,q}$, where $q \in \{2,3\}$. By a similar argument to that used above, it can be shown that the block $a_{-n+2}a_{-n+3}..a_1a_0$ is a prefix of $\zeta^m(s_{m,1}...s_{m,q})$. That is,
\begin{center}
$\zeta^m(S_m) = \zeta^m(s_{m,1}...s_{m,q}) = a_{-n+2}a_{-n+3}...a_1a_0C$,
\end{center}
where $C \in \cl(X^+_\omega).$ Since each desubstitution of $a_{-n+2}a_{-n+3}...a_1a_0$ is unique, this implies that  $s_{m,1}$ is uniquely determined  by the block $a_{-n+2}a_{-n+3}...a_1a_0$.

Furthermore, since we have assumed that $s_{m,0}$ is uniquely determined by $s_{m,1}$, it follows that $\zeta^m(s_{m,0}) = \zeta^m(\overline{s_{m,1}})$. Hence, $a_{-n+1}$, the last letter of $\zeta^m(s_{m,0})$, is uniquely determined by $s_{m,1}$. Thus, there is only one possible value for $a_{-n+1}$.
Therefore,  $0a_{-n+2}a_{-n+3}...a_1a_0$ and $1a_{-n+2}a_{-n+3}...a_1a_0$ cannot both be in $\cl(X_{\omega}^+)$, contradicting $a_{-n+1}a_{-n+2}a_{-n+3}...a_1a_0$ being a significant block.

Now suppose that there exists  $1\leq k < m$ such that the 1-block decomposition of $S_k$ has a dagger after $s_{k,1}$. Then there exists an $r$, $1\leq r \leq k$, such that for all $1\leq j < r$ the partition of $S_j$ has a dagger between $s_{j,0}$ and $s_{j,1}$, but the partition of $S_r$ has a dagger after $s_{r,1}$. By the previous argument, it follows that $a_{-n+1}$ is uniquely determined by $s_{r,0}$ and thus by $s_{r,1}$. Hence, $0a_{-n+2}a_{-n+3}..a_1a_0$ and $1a_{-n+2}a_{-n+3}..a_1a_0$ cannot both be in $\cl(X_{\omega}^+)$. Therefore,
\begin{center}
$S_m \notin \{100, 011, 1001,0110\}$.
\end{center}

\begin{remark}
Let $S_m = s_{m,0}...s_{m,q}$, where $q \in \{2,3\}$. By the above argument it must be the case that for all $1\leq k < m$ the partition of $S_k$ has a dagger between $s_{k,0}$ and $s_{k,1}$. This implies that $a_{-n+1}$ is the last letter of $\zeta^m(s_{m,0})$ and $a_{-n+2}a_{-n+3}..a_1a_0$ is a prefix of $\zeta^m(s_{m,1}...s_{m,q})$, as discussed.
\end{remark}

Having established the existence of an $m \in \mathbb{N}$ such that $S_m = s_{m,0}s_{m,1}s_{m,2}$, where $s_{m,1}s_{m,2} \in \{01,10 \}$, or $S_m = s_{m,0}s_{m,1}s_{m,2}s_{m,3}$, where  $s_{m,1}s_{m,2}s_{m,3} \in \{010,101\}$, define the ray $\nu$ as follows.

\noindent If $s_{m,0} = 0$ then,
$$
\nu =
\begin{cases}
 \omega_5\omega_6\omega_7...  = 0011001... ,& \text{if }s_{m,1}s_{m,2} = 01 \\
 \omega_6\omega_7\omega_8...  = 0110010... ,& \text{if }s_{m,1}s_{m,2s_{m,3}} = 010\\
  \omega_4\omega_5\omega_6... =  1001100... ,& \text{if }s_{m,1}s_{m,2} = 10 \\
  \omega_5\omega_6\omega_7... = 0011001...  ,& \text{if }s_{m,1}s_{m,2s_{m,3}} = 101 \\
\end{cases}
$$
If $s_{m,0} = 1$ then,
$$
\nu =
\begin{cases}
\omega_{13}\omega_{14}\omega_{15}... = 1101001... , & \text{if }s_{m,1}s_{m,2} = 10 \\
\omega_{14}\omega_{15}\omega_{16}... = 1010010... ,& \text{if }s_{m,1}s_{m,2s_{m,3}} = 101\\
\omega_{12}\omega_{13}\omega_{14}... = 0110100... ,& \text{if }s_{m,1}s_{m,2} = 01 \\
\omega_{13}\omega_{14}\omega_{15}... = 1101001... ,& \text{if }s_{m,1}s_{m,2s_m,3} = 010 \\
\end{cases}
$$
Note that the sequence $\nu$ is defined so that $s_{m,1}s_{m,2}\nu$  (or $s_{m,1}s_{m,2}s_{m,3}\nu$) is in $X_{\omega}^+$, but
$s_{m,0}s_{m,1}s_{m,2}\nu$ (or $s_{m,0}s_{m,1}s_{m,2}s_{m,3}\nu$) is not in $X_{\omega}^+$. We provide an example of this.

\begin{example} Let $s_{m,0}s_{m,1}s_{m,2} = 001$. Then $s_{m,0}s_{m,1}s_{m,2}\nu = 0010011001...$. Consider the 1-block decomposition of the prefix block $00100$. Since a dagger must be placed between consecutive zeros we get $0 \dagger 01 \dagger 0 \dagger 0$. This, however, is not an allowed 1-block decomposition. Thus $00100$ is not in  $\cl(X_{\omega}^+)$ and $s_{m,0}s_{m,1}s_{m,2}\nu \notin X_\omega^+$.
\end{example}

Suppose $s_{m,1}s_{m,2} \in \{01,10\}$. Let $\zeta^{m}(s_{m,1}s_{m,2}\nu) = d_1d_2d_3...$.
Note that the first $n-1$ letters of $\zeta^{m}(s_{m,1}s_{m,2}\nu)$ form the block $a_{-n+2}a_{-n+3}...a_0$. We claim that $d_{n-1}d_{n}d_{n+1}...$ is in $\text{fol}(a_{-n+2}a_{-n+3}...a_0)$ but not in $\text{fol} (a_{-n+1}a_{-n+2}...a_0)$.
Since $s_{m,1}s_{m,2}\nu \in X_{\omega}^+$, it follows that $\zeta^{m}(s_{m,1}s_{m,2}\nu) \in X_{\omega}^+$, as $\omega$ is fixed under the substitution $\zeta$. Thus, $d_{n-1}d_{n}d_{n+1}...$ is in $\text{fol}(a_{-n+2}a_{-n+3}...a_0)$. It remains to show that $a_{-n+1}\zeta^{m}(s_{m,1}s_{m,2}\nu)$ is not in $X_{\omega}^+$.

Let $\nu=\nu_1\nu_2\nu_3...$ and consider the block $a_{-n+1}\zeta^m(s_{m,1}s_{m,2}\nu_1\nu_2).$ Since the first $n-1$ terms of $\zeta^{m}(s_{m,1}s_{m,2}\nu_1\nu_2)$ form the block $a_{-n+2}a_{-n+3}...a_0$, the block $a_{-n+1}a_{-n+2} a_{-n+3}...a_0$ is a prefix of the block $a_{-n+1}\zeta^m(s_{m,1}s_{m,2}\nu_1\nu_2).$ Implementing the decomposition process $m$ times, the resulting block is  $s_{m,0}s_{m,1}s_{m,2}\nu_1\nu_2$. However, $s_{m,0}s_{m,1}s_{m,2}\nu_1\nu_2 \notin \cl(X_{\omega}^+).$  As the ancestor block of any block in $\cl(X_{\omega}^+)$ is in $\cl(X_{\omega}^+)$ it follows that $a_{-n+1}\zeta^m(s_{m,1}s_{m,2}\nu_1\nu_2) \notin \cl(X_{\omega}^+)$.
Thus, $a_{-n+1}\zeta^{m}(s_{m,1}s_{m,2}\nu)$ is not in $X_{\omega}^+$. Therefore, $a_{-n+1}a_{-n+2}a_{-n+3}...a_0$ is significant.

Using similar arguments, it can be shown that $a_{-n+1}a_{-n+2}...a_0$ is significant in the case that
\begin{center}
$s_{m,1}s_{m,2}s_{m,3} \in \{010,101\}$.
\end{center}
\end{proof}

\begin{remark}
We relate this result to Proposition \ref{Prop1} from the Sturmian case. Since there is not a unique left special sequence as in the Sturmian case, let $m_n$ be the number of blocks of length $n$ in $\cl(X_\omega^+)$ that can be extended to the left in two ways. Denote the set of such blocks by $\{L_n^i\}$, $1 \leq i \leq m_n$. Then for each $n \geq 2$ the significant blocks of length $n$ of $X_\omega^+$ are $\{0L_{n-1}^i, 1L_{n-1}^i\}$, $1 \leq i \leq m_{n-1}$.
\end{remark}

\subsection{HB diagram of the Morse minimal subshift}
Since there is no left special sequence to direct us to the blocks in $\cl(X_\omega^+)$ that can be extended to the left in two ways, the process of determining the significant blocks is more tedious. Nevertheless, it is possible to use the no $BBb$ property and desubstitution to identify the significant blocks.

After determining the significant blocks of the Morse minimal subshift, the next step is to determine the arrows. As in the Sturmian case, only those blocks that can be extended to the right in two ways will have two arrows out. However, for the Morse minimal subshift there is no easy technique for determining the significant blocks that satisfy this property. Hence the process of constructing the HB diagram of the Morse minimal subshift is not nearly as streamlined as for the Sturmians. We construct the HB diagram in the following way.

Begin by generating a list of signficant blocks. Start with the blocks $0$ and $1$.  Since $00, 10, 01$, and $11$ are all in $\cl(X_\omega^+)$, it follows that $0$ and $1$ can both be extended to the left in two ways. Hence $00, 10, 01$, and $11$ are all significant. Next consider $\cl_2(X_\omega^+) = \{00,01,10,11 \}$, the blocks of length 2 in $\cl(X_\omega^+)$. As the Morse minimal subshift has no blocks of the form $BBb$, $000$ and $111$ are not in $\cl(X_\omega^+)$. Hence $00$ and $11$ cannot be extended to the left in two ways. However, $10$ and $01$ can be extended to the left in two ways. Thus $110, 010, 001,$ and $101$ are the significant blocks of length 3.

Now consider $\cl_3(X_\omega^+) = \{001, 010, 011, 100, 101, 110\}$. By the no $BBb$ property, it follows that $0001$ and $1110$ are not in $\cl(X_\omega^+)$. The remaining blocks can all be extended to the left in two ways. Thus the significant blocks of length 4 are 0010, 1010, 0011, 1011, 0100, 1100, 0101,  and $1101.$  Continuing in this manner, we are able to generate a list of significant blocks of $X_\omega^+$;
\begin{center}
0,1, 00, 10, 01, 11, 110, 010, 001, 101, 0010, 1010, 0011, 1011, 0100, 1100, 0101, 1101, 00110, 01001,10110, 11010,... .
\end{center}

To determine the arrows in the HB diagram of $X_\omega^+$, we consider the right extensions of each significant block. We illustrate the process of determining the arrows by considering those arrows that start at a significant block of length 4. It is easily seen that $0011, 1011, 0100,$ and $1100$ can only be extended to the right in one way. Thus there is exactly one arrow out of each of these blocks, and these arrows are:
\begin{center}
$0011 \to \sig(00110) = 00110$ \\
$1011 \to \sig(10110) = 10110$ \\
$0100 \to \sig(01001) = 01001$ \\
$1100 \to \sig(11001) = 11001. $
\end{center}
Additionally, $1010$ and $0101$ can be extended to the right in only one way, as the blocks $10101$ and $01010$ are of the form $BBb$. The arrows out of these blocks are:
\begin{center}
$1010 \to \sig(10100) = 0100$ \\
$0101 \to \sig(01011) = 1011$.
\end{center}
Lastly, consider the blocks $1101$ and $0010$. Instead of using the no $BBb$ property, we consider the 1-block decomposition of each block. This gives us $1\dagger10\dagger1$ and $0\dagger01\dagger0$. Since extending each block to the right must yield a legal 1-block decomposition, it follows that $1101$ can be followed only by a 0, and $0010$ can be followed only by a 1. Thus, the arrows out of these blocks are:
\begin{center}
$1101 \to \sig(11010) = 1010$ \\
$0010 \to \sig(00101) = 0101$.
\end{center}
Note that although there is exactly one arrow out of each significant block of length 4, this is not the case in general.
For example, using the same process it can be shown that all four significant blocks of length 5 can be extended to the right in two ways.

Figure \ref{fig:MMS} depicts a portion of the HB diagram of the Morse minimal subshift that has been constructed using the process described above.

\begin{figure}[h]
\begin{center}
%\scalebox{.6}{
\begin{tikzpicture}[node distance=2.5cm,auto,>=latex']
\tiny{
    \node (a1) {};
    \node(a2)  [right of= a1, node distance=1.5cm]{};
    \node(a3) [right of= a2, node distance=1.5cm]{};
    \node(a4) [right of= a3, node distance=1.5cm]{0};
    \node(a5) [right of= a4, node distance=1.5cm]{1};
    \node(a6) [right of= a5, node distance=1.5cm]{};
    \node(a7) [right of= a6, node distance=1.5cm]{};
    \node(a8) [right of= a7, node distance=1.5cm]{};

    \node (b1) [below of=a1, node distance=1cm]{};
    \node(b2)  [below of= a2, node distance=1cm]{};
    \node(b3) [below of= a3, node distance=1cm]{00};
    \node(b4) [below of= a4, node distance=1cm]{01};
    \node(b5) [below of= a5, node distance=1cm]{10};
    \node(b6) [below of= a6, node distance=1cm]{11};
    \node(b7) [below of= a7, node distance=1cm]{};
    \node(b8) [below of= a8, node distance=1cm]{};

    \node (c1) [below of= b1, node distance=1cm]{};
    \node(c2)  [below of= b2, node distance=1cm]{};
    \node(c3) [below of= b3, node distance=1cm]{001};
    \node(c4) [below of= b4, node distance=1cm]{010};
    \node(c5) [below of= b5, node distance=1cm]{101};
    \node(c6) [below of= b6, node distance=1cm]{110};
    \node(c7) [below of= b7, node distance=1cm]{};
    \node(c8) [below of= b8, node distance=1cm]{};

    \node (d1) [below of= c1, node distance=1cm]{0011};
    \node(d2)  [below of= c2, node distance=1cm]{0010};
    \node(d3) [below of= c3, node distance=1cm]{0101};
    \node(d4) [below of= c4, node distance=1cm]{0100};
    \node(d5) [below of= c5, node distance=1cm]{1011};
    \node(d6) [below of= c6, node distance=1cm]{1010};
    \node(d7) [below of= c7, node distance=1cm]{1101};
    \node(d8) [below of= c8, node distance=1cm]{1100};

    \node (e1) [below of= d1, node distance=1cm]{00110};
    \node(e2)  [below of= d2, node distance=1cm]{};
    \node(e3) [below of= d3, node distance=1cm]{};
    \node(e4) [below of= d4, node distance=1cm]{01001};
    \node(e5) [below of= d5, node distance=1cm]{10110};
    \node(e6) [below of= d6, node distance=1cm]{};
    \node(e7) [below of= d7, node distance=1cm]{};
    \node(e8) [below of= d8, node distance=1cm]{11001};

    \node (f1) [below of= e1, node distance=1cm]{001100};
    \node(f2)  [below of= e2, node distance=1cm]{001101};
    \node(f3) [below of= e3, node distance=1cm]{010010};
    \node(f4) [below of= e4, node distance=1cm]{010011};
    \node(f5) [below of= e5, node distance=1cm]{101100};
    \node(f6) [below of= e6, node distance=1cm]{101101};
    \node(f7) [below of= e7, node distance=1cm]{110010};
    \node(f8) [below of= e8, node distance=1cm]{110011};

    \node(g1) [below of= f1, node distance=1.5cm]{0011001};
    \node(g2)  [below of= f2, node distance=1.5cm]{0011010};
    \node(g3) [below of= f3, node distance=1.5cm]{0100101};
    \node(g4) [below of= f4, node distance=1.5cm]{0100110};
    \node(g5) [below of= f5, node distance=1.5cm]{1011001};
    \node(g6) [below of= f6, node distance=1.5cm]{1011010};
    \node(g7) [below of= f7, node distance=1.5cm]{1100101};
    \node(g8) [below of= f8, node distance=1.5cm]{1100110};

    \node(h1) [below of=g1, node distance=1cm]{};
    \node(h2)  [below of= g2, node distance=1cm]{00110100};
    \node(h3) [below of= g3, node distance=1cm]{01001011};
    \node(h4) [below of=g4, node distance=1cm]{};
    \node(h5) [below of=g5, node distance=1cm]{};
    \node(h6) [below of= g6, node distance=1cm]{10110100};
    \node(h7) [below of= g7, node distance=1cm]{11001011};
     \node(h8) [below of=g8, node distance=1cm]{};

    \node(i2)  [below of= h2, node distance=1cm]{\vdots};
    \node(i3) [below of= h3, node distance=1cm]{\vdots};
    \node(i6) [below of= h6, node distance=1cm]{\vdots};
    \node(i7) [below of= h7, node distance=1cm]{\vdots};

    \path[->] (a4) edge node {} (b3);
    \path[->] (a4) edge node {} (b4);
    \path[->] (a5) edge node {} (b5);
    \path[->] (a5) edge node {} (b6);

     \path[->] (b3) edge node {} (c3);
     \path[->] (b4) edge node {} (c4);
     \path[->] (b5) edge node {} (c5);
     \path[->] (b6) edge node {} (c6);

      \path[->] (b4) edge [bend right] node {} (b6);
      \path[->] (b5) edge [bend left] node {} (b3);

     \path[->] (c3) edge node {} (d1);
     \path[->] (c3) edge node {} (d2);
     \path[->] (c4) edge node {} (d3);
     \path[->] (c4) edge node {} (d4);
     \path[->] (c5) edge node {} (d5);
     \path[->] (c5) edge node {} (d6);
     \path[->] (c6) edge node {} (d7);
     \path[->] (c6) edge node {} (d8);

     \path[->] (d2) edge [bend right] node {} (d3);
     \path[->] (d3) edge [bend right] node {} (d5);

     \path[->] (d7) edge [bend left] node {} (d6);
     \path[->] (d6) edge [bend left] node {} (d4);

     \path[->] (d1) edge node {} (e1);
     \path[->] (d4) edge node {} (e4);
     \path[->] (d5) edge node {} (e5);
     \path[->] (d8) edge node {} (e8);

     \path[->] (e1) edge node {} (f1);
     \path[->] (e1) edge node {} (f2);
     \path[->] (e4) edge node {} (f3);
     \path[->] (e4) edge node {} (f4);
     \path[->] (e5) edge node {} (f5);
     \path[->] (e5) edge node {} (f6);
     \path[->] (e8) edge node {} (f7);
     \path[->] (e8) edge node {} (f8);

     \path[->] (f1) edge node {} (g1);
     \path[->] (f2) edge node {} (g2);
     \path[->] (f3) edge node {} (g3);
     \path[->] (f4) edge node {} (g4);
     \path[->] (f5) edge node {} (g5);
     \path[->] (f6) edge node {} (g6);
     \path[->] (f7) edge node {} (g7);
     \path[->] (f8) edge node {} (g8);

     \path[->] (g1) edge [out= 50, in=-130] node {} (f7);
     \path[->] (g2) edge node {} (h2);
     \path[->] (g3) edge node {} (h3);
     \path[->] (g4) edge [out = 150, in=-80] node {} (f1);
     \path[->] (g5) edge [out = 30, in=-100] node {} (f8);
     \path[->] (g6) edge node {} (h6);
     \path[->] (g7) edge node {} (h7);
     \path[->] (g8) edge [out=130, in=-50] node {} (f2);

     \path[->] (h2) edge node {} (i2);
     \path[->] (h3) edge node {} (i3);
     \path[->] (h6) edge node {} (i6);
     \path[->] (h7) edge node {} (i7);

}
\end{tikzpicture}
%}
 \caption{The HB diagram of the Morse minimal subshift.} \label{fig:MMS}
\end{center}
\end{figure}
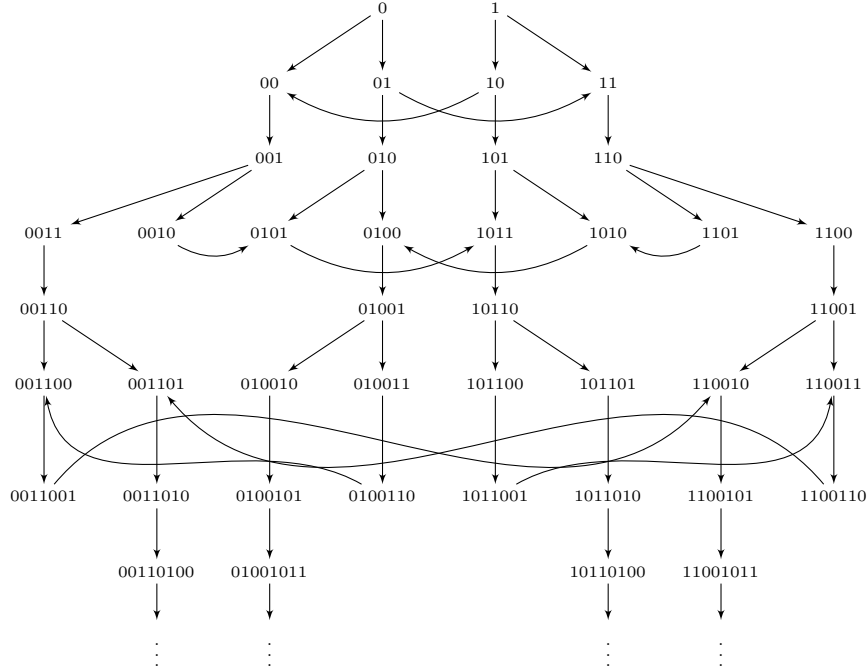

\section{Conclusion and Further Directions}
 We have described the construction of HB diagrams for some highly non-Markovian systems, that is, systems with long-range memory. These HB diagrams provide a way to visualize all the possibilities for extending any given block and {present} useful information about the languages of such systems and therefore about the structures of the systems themselves.
Here are a few questions about how the diagrams might be put to further use.
How can invariant measures be represented on the diagrams?  Can we detect unique ergodicity or minimality from these diagrams?
In Section \ref{Morse} we were able to construct the HB diagram of the Morse minimal subshift because the recognizability property of the Morse substitution allowed us to say precisely which blocks are significant. Can this result be generalized to any recognizable, or bilaterally recognizable, substitution?
It is known that beta shifts, as well as their factors, have unique measures of maximal entropy \cite{Hofbauer,Walters79,Climenhaga}.
{The HB diagram of a $\beta$-shift turns out to be just a relabeling of the well-known $\beta$-shift graph (see, e.g., \cite{Johnson,Chile}).}
Is there a simple way to transform the HB diagram of a subshift to produce the HB diagram of one of its factors? {A} relation between the two diagrams {could} help to understand factor maps, and in particular to identify measures of maximal entropy or maximal relative entropy.

\begin{ack*}
This paper is based on the UNC-Chapel Hill master's project of the first author, written under the direction of the second author.
\end{ack*}

\bibliographystyle{amsplain}
\bibliography{New}

\end{document}